\newtheorem{theorem}{Theorem}[section]
\newtheorem{corollary}[theorem]{Corollary}
\newtheorem*{problem*}{Problem 1}
\newtheorem{remark}[theorem]{Remark}
\newtheorem{thm}{Theorem}
\newcommand{\sn}{\mathrm{sn}}
\begin{document}
\title[Comparison geometry for integral Bakry-\'Emery Ricci tensor]
{Comparison geometry for integral Bakry-\'Emery Ricci tensor bounds}

\author{Jia-Yong Wu}
\address{Department of Mathematics, Shanghai Maritime University, Shanghai 201306, P. R. China}
\email{jywu81@yahoo.com}

\date{\today}
\subjclass[2000]{Primary 53C20}
\keywords{Bakry-\'{E}mery Ricci tensor; smooth metric measure space; integral curvature;
comparison theorem; diameter estimate; eigenvalue estimate; volume growth estimate}

\begin{abstract}
We prove mean curvature and volume comparison estimates on smooth metric measure
spaces when their integral Bakry-\'{E}mery Ricci tensor bounds, extending Wei-Wylie's
comparison results to the integral case. We also apply comparison results to get
diameter estimates, eigenvalue estimates and volume growth estimates on smooth metric
measure spaces with their normalized integral smallness for Bakry-\'{E}mery Ricci
tensor. These give generalizations of some work of Petersen-Wei, Aubry, Petersen-Sprouse,
Yau and more.
\end{abstract}
\maketitle

\section{Introduction and main results}\label{Int1}
In \cite{[PeWe]}, Petersen and Wei generalized the classical relative Bishop-Gromov
volume comparison to a situation where one has an integral bound for the Ricci tensor.
Let's briefly recall their results. Given an $n$-dimensional complete Riemannian
manifold $M$, for each $x\in M$ let $\lambda\left( x\right) $ be the smallest
eigenvalue for the Ricci tensor $\mathrm{Ric}:T_xM\to T_xM,$ and
\[
\mathrm{Ric}^H_-(x):=((n-1)H-\lambda(x))_+=\max\left\{0,(n-1)H-\lambda(x)\right\},
\]
where $H\in \mathbb{R}$, the amount of Ricci tensor below $(n-1)H$. Define
\[
\|\mathrm{Ric}^H_-\|_p(R):=\sup_{x\in M}\left(\int_{B(x,R)}(\mathrm{Ric}^H_-)^p\,
dv\right)^{\frac 1p},
\]
which measures the amount of Ricci tensor lying below $(n-1)H$, in the $L^p$
sense. Clearly, $\|\mathrm{Ric}^H_-\|_p(R)=0$ iff ${\mathrm{Ric}}\geq (n-1)H$.
Also let $r(y)=d(y,x)$ be the distance function from $x$ to $y$, and
\[
\varphi(y):=(\Delta r-m_H)_+,
\]
where $m_H$ is the mean curvature of the geodesic sphere in $M^n_H$, the
$n$-dimensional simply connected space with constant sectional curvature $H$.
The classical Laplacian comparison states that if $\mathrm{Ric}\ge(n-1)H$,
then $\Delta\,r \le m_H$. That is to say, if $\mathrm{Ric}^H_- \equiv 0$,
then $\varphi\equiv 0$. In fact this comparison result was generalized to
integral Ricci tensor lower bound.

\begin{thm}[Petersen-Wei \cite{[PeWe]}]\label{T1}
Let $M$ be an $n$-dimensional complete Riemannian manifold. For any
$p>\frac{n}{2}$, $H\in\mathbb{R}$ (assume $r\leq\frac{\pi}{2\sqrt{H}}$ when $H>0$),
\[
\|\varphi\|_{2p}(r)\leq\left[\frac{(n-1)(2p-1)}{2p-n}\,\|\mathrm{Ric}^H_-\|_p(r)\right]^{\frac 12}.
\]
Consequently, for any $0<r\leq R$ (assume $R\leq\frac{\pi}{2\sqrt{H}}$ when $H>0$),
there exists a constant $C(n,p,H,R)$ which is nondecreasing in $R$, such that
\[
\left(\frac{V(x, R)}{V_H(R)}\right)^{\frac{1}{2p}}-\left(\frac{V(x,r)}{V_H(r)}\right)^{\frac{1}{2p}}\leq C(n,p,H,R)\,\bigg(\|\mathrm{Ric}^H_-\|_p(R)\bigg)^{\frac 12},
\]
where $V(x,R)$ denotes the volume of ball $B(x,R)$ in $M$, and $V_H(R)$ denotes
the volume of ball $B(O,R)$ in the model space $M_H$, where $O\in M_H$.
\end{thm}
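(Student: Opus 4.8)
The plan is to run a Riccati comparison along radial geodesics, integrate it against the Riemannian volume element, apply H\"older's inequality twice, and then pass from a geodesic-sphere estimate to a volume estimate by an elementary comparison of ratios of integrals. Fix $x\in M$, write $r(y)=d(x,y)$, $\psi=\Delta r$, and let $\mathcal A(t,\theta)$ be the volume element of $M$ in geodesic polar coordinates about $x$ (extended by $0$ past the cut distance), so that $\psi=\mathcal A'/\mathcal A$ before the cut locus; likewise $\mathcal A_H(t)=\sn_H(t)^{n-1}$ is the model volume element and $m_H=\mathcal A_H'/\mathcal A_H$. All computations take place along a fixed unit-speed minimizing geodesic from $x$; the non-smoothness of $\psi$ on the cut locus is handled in the standard way, e.g.\ by Calabi's barrier argument or by integrating only up to the first cut point, past which $\mathcal A$ jumps down and so only helps.

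\emph{Step 1 (the Laplacian estimate).} Bochner's formula for $r$ gives $\psi'+\tfrac{\psi^2}{n-1}\le-\mathrm{Ric}(\partial_r,\partial_r)$, while $m_H'+\tfrac{m_H^2}{n-1}=-(n-1)H$ exactly; subtracting, and using $\psi+m_H=\varphi+2m_H$ on $\{\psi>m_H\}$, gives there
\[
\varphi'+\frac{\varphi^2}{n-1}+\frac{2m_H\varphi}{n-1}\le\mathrm{Ric}^H_-.
\]
Multiplying by $(2p-1)\varphi^{2p-2}\mathcal A$, using $(2p-1)\varphi'\varphi^{2p-2}=(\varphi^{2p-1})'$ together with $\mathcal A'=(\varphi+m_H)\mathcal A$, and collecting terms yields
\[
(\varphi^{2p-1}\mathcal A)'+\frac{2p-n}{n-1}\,\varphi^{2p}\mathcal A+\frac{4p-n-1}{n-1}\,m_H\,\varphi^{2p-1}\mathcal A\le(2p-1)\,\mathrm{Ric}^H_-\,\varphi^{2p-2}\mathcal A.
\]
Here $p>\tfrac n2$ makes the coefficient of $\varphi^{2p}\mathcal A$ positive, while $r\le\tfrac{\pi}{2\sqrt H}$ (when $H>0$) makes $m_H\ge0$, so the last term on the left is nonnegative. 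Integrate over $[0,r]$ --- the $t=0$ boundary term vanishes since $\varphi=O(t)$, $\mathcal A=O(t^{n-1})$, and the $t=r$ term, being nonnegative, is dropped --- then apply H\"older with exponents $\tfrac p{p-1},p$ on the right (writing $\mathcal A=\mathcal A^{1-1/p}\mathcal A^{1/p}$) and solve for $\int_0^r\varphi^{2p}\mathcal A\,dt$ to get $\int_0^r\varphi^{2p}\mathcal A\,dt\le\big(\tfrac{(n-1)(2p-1)}{2p-n}\big)^{p}\int_0^r(\mathrm{Ric}^H_-)^p\mathcal A\,dt$. Integrating over $\theta\in S^{n-1}$ turns this into $\int_{B(x,r)}\varphi^{2p}\,dv\le\big(\tfrac{(n-1)(2p-1)}{2p-n}\big)^{p}\int_{B(x,r)}(\mathrm{Ric}^H_-)^p\,dv$; taking $\tfrac1{2p}$-th powers and the supremum over $x$ gives the first inequality.

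\emph{Step 2 (spheres to volumes).} Put $\mathcal A(t)=\int_{S^{n-1}}\mathcal A(t,\theta)\,d\theta$ (the area of $\partial B(x,t)$), $A_H(t)=\omega_{n-1}\mathcal A_H(t)$, $a(t)=\mathcal A(t)/A_H(t)$, so $V(x,t)=\int_0^t\mathcal A$, $V_H(t)=\int_0^t A_H$, and $a(0^+)=1$. From $\mathcal A'(t)\le\int_{S^{n-1}}\psi\mathcal A\,d\theta$ one gets $a'\le a\cdot\big(\int_{S^{n-1}}\varphi\mathcal A\,d\theta\big)/\big(\int_{S^{n-1}}\mathcal A\,d\theta\big)$; applying H\"older in $\theta$ (exponents $\tfrac{2p}{2p-1},2p$) and using $\int_{S^{n-1}}\mathcal A\,d\theta=aA_H$ gives $a'\le a^{1-1/2p}A_H^{-1/2p}\big(\int_{S^{n-1}}\varphi^{2p}\mathcal A\,d\theta\big)^{1/2p}$, so that in $\tfrac{d}{dt}a^{1/2p}=\tfrac1{2p}a^{1/2p-1}a'$ the powers of $a$ cancel, leaving
\[
\frac{d}{dt}\,a(t)^{1/2p}\le\frac1{2p}\,A_H(t)^{-1/2p}\Big(\int_{S^{n-1}}\mathcal A(t,\theta)\,\varphi(t,\theta)^{2p}\,d\theta\Big)^{1/2p}.
\]
Integrating from $r$ to $R$, then H\"older in $t$ (exponents $\tfrac{2p}{2p-1},2p$), together with $\int_r^R\!\int_{S^{n-1}}\varphi^{2p}\mathcal A\le\int_{B(x,R)}\varphi^{2p}\,dv\le\|\varphi\|_{2p}(R)^{2p}$, gives
\[
a(R)^{1/2p}-a(r)^{1/2p}\ \le\ E\ :=\ \frac1{2p}\Big(\int_0^R A_H(t)^{-\frac1{2p-1}}\,dt\Big)^{\frac{2p-1}{2p}}\|\varphi\|_{2p}(R),
\]
the $t$-integral being finite exactly because $A_H(t)^{-1/(2p-1)}\sim t^{-(n-1)/(2p-1)}$ near $0$ and $p>\tfrac n2$; the same bound holds for $a(t_2)^{1/2p}-a(t_1)^{1/2p}$, any $0<t_1\le t_2\le R$. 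The classical Bishop-Gromov trick --- $\tfrac{V(x,R)}{V_H(R)}$ lies between $\tfrac{V(x,r)}{V_H(r)}$ and $\tfrac{\int_r^R aA_H}{\int_r^R A_H}$ --- combined with $\tfrac{\int_r^R aA_H}{\int_r^R A_H}\le(a(r)^{1/2p}+E)^{2p}$ and $\tfrac{V(x,r)}{V_H(r)}\ge\min_{(0,r]}a\ge(a(r)^{1/2p}-E)_+^{2p}$ (both from the almost-monotonicity of $a^{1/2p}$) then yields $\big(\tfrac{V(x,R)}{V_H(R)}\big)^{1/2p}-\big(\tfrac{V(x,r)}{V_H(r)}\big)^{1/2p}\le 2E$. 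Substituting the Step 1 bound on $\|\varphi\|_{2p}(R)$ gives the claim with $C(n,p,H,R)=\tfrac1p\big(\int_0^R A_H(t)^{-1/(2p-1)}\,dt\big)^{(2p-1)/(2p)}\big[\tfrac{(n-1)(2p-1)}{2p-n}\big]^{1/2}$, visibly nondecreasing in $R$.

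\emph{Main obstacle.} The crux is the choice of weight: multiplying the Riccati inequality by $\varphi^{2p-2}\mathcal A$ rather than by $\varphi^{2p-2}$ alone is what brings in the Riemannian volume element --- so that the integrated inequality lives on the ball $B(x,r)$ and $\|\cdot\|_p(r)$ really appears --- and at the same time produces the sharp coefficient $\tfrac{2p-n}{n-1}$; in Step 2 the parallel subtlety is arranging the two H\"older steps so that the sphere ratio $a$ drops out, after which only the routine-but-fussy Bishop-Gromov passage from spheres to balls remains. The cut-locus irregularity of $\Delta r$ is, as always, a purely technical point.
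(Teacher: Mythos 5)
Your proof is correct, and it establishes the statement as quoted (with the exponent $\tfrac1{2p}$ on the volume ratios). It is worth noting that the paper never actually proves Theorem~A --- it is cited from Petersen--Wei --- and what the paper does prove are the weighted analogues (Theorems~1.2 and~1.3 in Sections~2 and~3). Your Step~1 coincides with the paper's treatment of the mean curvature estimate (multiply the Riccati inequality by $(2p-1)\varphi^{2p-2}\mathcal A$, integrate, apply H\"older with exponents $p,\tfrac{p}{p-1}$), and you correctly identify dropping the nonnegative $m_H$ term (which uses $r\le\tfrac{\pi}{2\sqrt H}$) and the cancellation producing $\tfrac{2p-n}{n-1}$ as the crux.

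Your Step~2, however, goes a genuinely different route than the paper's. The paper's Theorem~1.3 rests on the \emph{pointwise} conclusion $\varphi^{2p-1}\mathcal A_f e^{-ar}\le\text{const}\cdot\int_0^r(\mathrm{Ric}^H_{f\,-})^p\mathcal A_f e^{-at}\,dt$ (inequality (2.4) there), applies H\"older on $S^{n-1}$ with exponents $2p-1$ and $\tfrac{2p-1}{2p-2}$, obtains a differential inequality for $\bigl(A_f/A_H^a\bigr)^{1/(2p-1)}$, and then passes from spheres to balls by integrating a second differential inequality for the ratio $V_f/V_H^a$, yielding the exponent $\tfrac1{2p-1}$ on the volume ratios. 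You instead keep only the integrated bound $\int_0^r\varphi^{2p}\mathcal A\,dt\le\text{const}\cdot\int_0^r(\mathrm{Ric}^H_-)^p\mathcal A\,dt$, apply H\"older on $S^{n-1}$ with exponents $2p$ and $\tfrac{2p}{2p-1}$ so that the powers of $a=A/A_H$ cancel in $\tfrac{d}{dt}a^{1/2p}$, then H\"older in $t$, producing a sphere estimate for $a^{1/2p}$, and finish with the mediant inequality together with the almost-monotonicity of $a^{1/2p}$ to transfer from spheres to balls. Your version therefore reproduces the originally quoted $\tfrac1{2p}$ exponent directly, uses one less derived estimate, and replaces the second ODE for the volume ratio with an elementary argument; the paper's route gives the $\tfrac1{2p-1}$ form, which is the more convenient normalization for its later weighted volume-doubling and annulus estimates. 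Both are valid; neither is strictly stronger.

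One small inconsistency to flag, not in your argument but in the source: the paper's Remark after Theorem~1.3 asserts that setting $f$ constant recovers Theorem~A, but the exponents on the volume ratios differ ($\tfrac1{2p-1}$ vs.\ $\tfrac1{2p}$); they are close cousins, not identical statements. Your proof delivers exactly the form quoted in Theorem~A.
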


Petersen and Wei \cite{[PeWe],PeWe00} used these comparison estimates to extend
many classical results of pointwise Ricci tensor condition to the integral
curvature condition, such as compactness theorems, Colding's volume convergence
and Cheeger-Colding splitting theorems. Petersen and Sprouse \cite{[PeSp]}
extended Petersen-Wei's comparison results and generalized Myers' theorem to
a integral Ricci tensor bound. Aubry \cite{[Au]} used integral comparison
estimates on star-shaped domains to improve Petersen-Sprouse's diameter
estimate. He also got finite fundamental group theorem in the integral Ricci
tensor sense. For more results, see for example \cite {[Au], [Au2], [DPW], [DW],
[DWZ], [Gal], PeWe00, [Yang]}.

\vspace{.1in}

An $n$-dimensional smooth metric measure space, denoted by $(M,g,e^{-f}dv_g)$,
is a complete $n$-dimensional Riemannian manifold $(M,g)$ coupled with a weighted
volume $e^{-f}dv_g$ for some $f\in C^\infty(M)$, where $dv_g$ is the usual Riemannian
volume element on $M$. It naturally occurs as the collapsed measured Gromov-Hausdorff
limit \cite{[Lott1]}. The $f$-Laplacian $\Delta_f$ associated to $(M,g,e^{-f}dv_g)$
is given by
\[
\Delta_f:=\Delta-\nabla f\cdot\nabla,
\]
which is self-adjoint with respect to $e^{-f}dv_g$. The associated Bakry-\'Emery
Ricci tensor, introduced by Bakry and \'Emery \cite{[BE]}, is defined as
\[
\mathrm{Ric}_f:=\mathrm{Ric}+\mathrm{Hess}\,f,
\]
where $\mathrm{Hess}$ is the Hessian with respect to the metric $g$, which is a natural
generalization of the Ricci tensor. In particular, if $$\mathrm{Ric}_f=\rho\, g$$
for some $\rho\in\mathbb{R}$, then $(M,g,e^{-f}dv_g)$ is a gradient Ricci soliton.
The Ricci soliton is called shrinking, steady, or expanding, if $\rho>0$, $\rho=0$,
or $\rho<0$, respectively, which arises as the singularity model of the Ricci flow
\cite{[Ham]}. When $\mathrm{Ric}_f$ is bounded below, many geometrical and
topological results were successfully explored provided some condition on $f$
is added. For example, Wei and Wylie \cite{[WW]} proved mean curvature and volume
comparisons when $\mathrm{Ric}_f$ is bounded below and $f$ or $\nabla f$ is (lower)
bounded. And they extended many classical theorems, such as Myers' theorem,
Cheeger-Gromoll splitting theorem, to the Bakry-\'Emery Ricci tensor. They also
expected volume comparisons could be extended to the case that $\mathrm{Ric}_f$ is
bounded below in the integral sense, which partly motivates the present paper.

In this paper we not only generalize comparison estimates on manifolds with
integral bounds for the Ricci tensor to smooth metric measure spaces, but also
extend pointwise comparison estimates on smooth metric measure spaces to the
integral setting. In our situation, we consider weighted integral bounds for
the Bakry-\'{E}mery Ricci tensor instead of usual integral bounds for the
Ricci tensor. Our results indicate that Petersen-Wei's and Aubry's type
comparison estimates remain true when certain weighted integral
Bakry-\'{E}mery Ricci tensor bounds and $\nabla f$ is lower bounded (even no
assumption on $f$). We also prove an relative weighted (or $f$-)volume comparison
for annular regions under the same curvature integral condition. Some applications,
such as diameter estimates, eigenvalue estimates and volume growth estimates,
are discussed.

\vspace{.1in}

Fix $H\in\mathbb{R}$, and consider at each point $x$ of an $n$-dimensional smooth
metric measure space $(M, g, e^{-f}dv_g)$ with the smallest eigenvalue $\lambda(x)$
for the tensor $\mathrm{Ric}_f:T_xM\to T_xM$. We define
\[
{\mathrm{Ric}^H_f}_-:=[(n-1)H-\lambda(x)]_+=\max\{0,(n-1)H-\lambda(x)\},
\]
the amount of $\mathrm{Ric}_f$ lying below $(n-1)H$. To write our results simplicity,
we introduce a new weighted $L^p$ norm of function $\phi$ on $(M, g, e^{-f}dv_g)$:
\[
{\|\phi\|_p}_{\,f,a}(r):=\sup_{x\in M}
\left(\int_{B(x,r)}|\phi|^p\cdot\mathcal{A}_f e^{-at}\,dtd\theta_{n-1}\right)^{\frac 1p},
\]
where $\partial_rf\geq-a$ for some constant $a\geq 0$, along a minimal geodesic segment
from $x\in M$. Here $\mathcal{A}_f(t,\theta)$ is the volume element of weighted form
$e^{-f}dv_g=\mathcal{A}_f(t,\theta)dt\wedge d\theta_{n-1}$ in polar coordinate, and
$d\theta_{n-1}$ is the volume element on unit sphere $S^{n-1}$. Sometimes it is
convenient to work with the normalized curvature quantity
\[
\bar{k}(p,H,a,r):=\sup_{x\in M}
\left(\frac{1}{V_f(x,r)}\cdot\int_{B(x,r)}({{\mathrm{Ric}^H_f}_-})^p
\mathcal{A}_f e^{-at}dtd\theta_{n-1}\right)^{\frac 1p},
\]
where $V_f(x,r):=\int_{B(x,r)}e^{-f}dv$. Obviously,
${\|{\mathrm{Ric}^H_f}_-\|_p}_{\,f,a}(r)=0$ (or $\bar{k}(p,H,a,r)=0$)
iff ${\mathrm{Ric}_f}\geq (n-1)H$. When $f=0$ (and $a=0$), all above notations
recover the usual integral quantities on manifolds.

Motivated by Wei-Wylie's mean curvature comparison \cite{[WW]}, we need
to consider the error form
\[
\varphi:=(m_f-m_H-a)_+,
\]
where $m_f=m-\partial_rf$ and $m$ is the mean curvature of the geodesic sphere
in the outer normal direction; and where $m_H$ is the mean curvature of the
geodesic sphere in the model space $M^n_H$. In \cite{[WW]}, Wei and Wylie
showed that if ${\mathrm{Ric}^H_f}_-=0$ and $\partial_r f\geq-a$  $(a\ge0)$,
then $\varphi=0$. We prove that,

\begin{theorem}[Mean curvature comparison estimate I]\label{Mainthm}
Let $(M,g,e^{-f}dv)$ be an $n$-dimensional smooth metric measure space. Assume that
\[
\partial_rf\geq-a
\]
for some constant $a\geq 0$, along a minimal geodesic segment from $x\in M$.
For any $p>n/2$, $H\in\mathbb{R}$ (assume $r\leq\frac{\pi}{2\sqrt{H}}$ when $H>0$),
\begin{equation}\label{wintegine}
{\|\varphi\|_{2p}}_{\,f,a}(r)
\leq\left[\frac{(n-1)(2p-1)}{(2p-n)}\,{\|{\mathrm{Ric}^H_f}_-\|_p}_{\,f,a}(r)\right]^{\frac 12}
\end{equation}
and
\begin{equation}\label{wintegine2}
\varphi^{2p-1}\mathcal{A}_f\, e^{-ar}
\leq(2p-1)^p\left(\frac{n-1}{2p-n}\right)^{p-1}\cdot\int^r_0({\mathrm{Ric}^H_f}_-)^p \mathcal{A}_fe^{-at}dt
\end{equation}
along that minimal geodesic segment from $x$.

\vspace{.1in}

Moreover, if $H>0$ and $\frac{\pi}{2\sqrt{H}}<r<\frac{\pi}{\sqrt{H}}$, then we have
\begin{equation}\label{winteginesin}
{\Big\|\sin^{\frac{4p-n-1}{2p}}(\sqrt{H}t)\cdot\varphi\Big\|_{2p}}_{\,f,a}(r)
\leq\left[\frac{(n-1)(2p-1)}{(2p-n)}\,{\|{\mathrm{Ric}^H_f}_-\|_p}_{\,f,a}(r)\right]^{\frac 12}
\end{equation}
and
\begin{equation}\label{winteginesin2}
\sin^{4p-n-1}(\sqrt{H}r)\varphi^{2p-1}\mathcal{A}_f e^{-ar}
\leq(2p{-}1)^p\left(\frac{n{-}1}{2p{-}n}\right)^{p-1}\cdot\int^r_0({\mathrm{Ric}^H_f}_-)^p \mathcal{A}_fe^{-at}dt
\end{equation}
along that minimal geodesic segment from $x$.
\end{theorem}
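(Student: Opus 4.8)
The plan is to run Petersen and Wei's argument for Theorem~\ref{T1} in the weighted setting. Two ingredients are new: one must pass from the mean curvature $m$ to the $f$-mean curvature $m_f=m-\partial_r f$ using the hypothesis $\partial_r f\ge-a$, and one must replace the Riemannian area element by the weighted element $w:=\mathcal{A}_f\,e^{-at}$, whose logarithmic derivative is $w'/w=m_f-a$. As usual, every estimate is first established along a fixed minimal geodesic issuing from $x$, on its segment up to the cut point; the passage to balls, and then to the weighted $L^p$ norms, is routine once one recalls that $\varphi$, set equal to $0$ past the cut locus, still satisfies the relevant inequalities in the integrated sense because the singular part of $\Delta r$ along the cut locus is nonpositive.

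\textit{Step 1: a Riccati inequality for $\varphi$.} From the classical inequality $m'\le-\frac{m^2}{n-1}-\mathrm{Ric}(\partial_r,\partial_r)$ together with $m_f'=m'-\mathrm{Hess}\,f(\partial_r,\partial_r)$ one gets $m_f'\le-\frac{m^2}{n-1}-\mathrm{Ric}_f(\partial_r,\partial_r)\le-\frac{m^2}{n-1}-(n-1)H+{\mathrm{Ric}^H_f}_-$. Subtracting $m_H'=-\frac{m_H^2}{n-1}-(n-1)H$ and using $m=m_f+\partial_r f\ge m_f-a$, one has, in the range where $m_H\ge0$ (that is, $H\le0$, or $H>0$ and $r\le\frac{\pi}{2\sqrt H}$), that $m\ge m_f-a>m_H\ge0$ wherever $\varphi>0$; hence $m^2\ge(m_f-a)^2$ and $(m_f-a)^2-m_H^2=\varphi(\varphi+2m_H)$, which gives on $\{\varphi>0\}$
\[
\varphi'\le-\frac{\varphi^2}{n-1}-\frac{2m_H}{n-1}\,\varphi+{\mathrm{Ric}^H_f}_-,
\]
the weighted analogue of Petersen--Wei's key inequality. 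Also $\varphi(0)=0$, since as $t\to0^+$ one has $m-m_H\to0$ while $\partial_r f\to\partial_r f(0)\ge-a$, so $\varphi\to(-\partial_r f(0)-a)_+=0$.

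\textit{Step 2: integrating against the weight.} Set $F:=\varphi^{2p-1}w$. Since $w'=(m_f-a)w=(\varphi+m_H)w$ on $\{\varphi>0\}$, the inequality of Step~1 yields
\[
F'\le\Big((2p-1)\varphi^{2p-2}{\mathrm{Ric}^H_f}_--\frac{2p-n}{n-1}\,\varphi^{2p}+\frac{n+1-4p}{n-1}\,m_H\varphi^{2p-1}\Big)w.
\]
Because $p>n/2$, the coefficient $\frac{n+1-4p}{n-1}$ is negative and $m_H\ge0$, so the last term is discarded. For \eqref{wintegine2} apply Young's inequality to write $(2p-1)\varphi^{2p-2}{\mathrm{Ric}^H_f}_-\le\frac{2p-n}{n-1}\varphi^{2p}+C(n,p)({\mathrm{Ric}^H_f}_-)^p$ with $C(n,p)=\frac{(2p-1)^p(p-1)^{p-1}}{p^p}\big(\frac{n-1}{2p-n}\big)^{p-1}\le(2p-1)^p\big(\frac{n-1}{2p-n}\big)^{p-1}$; the $\varphi^{2p}$ terms cancel, so $F'\le C(n,p)({\mathrm{Ric}^H_f}_-)^p w$, and integration over $[0,r]$ with $F(0)=0$ gives \eqref{wintegine2}. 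For \eqref{wintegine} integrate the displayed bound for $F'$ over $[0,r]$ and drop the nonnegative term $F(r)$, obtaining $\frac{2p-n}{n-1}\int_0^r\varphi^{2p}w\,dt\le(2p-1)\int_0^r\varphi^{2p-2}{\mathrm{Ric}^H_f}_-\,w\,dt$; H\"older with exponents $\frac{p}{p-1}$ and $p$ bounds the right-hand side by $(2p-1)\big(\int_0^r\varphi^{2p}w\,dt\big)^{\frac{p-1}{p}}\big(\int_0^r({\mathrm{Ric}^H_f}_-)^pw\,dt\big)^{\frac1p}$, and then cancelling the common factor, taking the power $1/p$, integrating over $\theta\in S^{n-1}$ and taking $\sup_x$ gives \eqref{wintegine}.

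\textit{Step 3: the sign change of $m_H$, and the main obstacle.} The remaining case $H>0$, $\frac{\pi}{2\sqrt H}<r<\frac{\pi}{\sqrt H}$ is the crux: there $m_H=(n-1)\sqrt H\cot(\sqrt H t)$ turns negative for $t>\frac{\pi}{2\sqrt H}$, so the term $\frac{n+1-4p}{n-1}m_H\varphi^{2p-1}$ in Step~2 no longer has a favourable sign. The device is to carry the extra weight $\sin^{4p-n-1}(\sqrt H t)$: its logarithmic derivative is $(4p-n-1)\sqrt H\cot(\sqrt H t)=\frac{4p-n-1}{n-1}m_H$, so with $\widetilde F:=\sin^{4p-n-1}(\sqrt H t)\,F$ the differentiation produces an extra summand that exactly cancels the offending one, since $(4p-n-1)+(n+1-4p)=0$. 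As $4p-n-1>0$ for $p>n/2$ one still has $\widetilde F(0)=0$, so the Young step reproduces \eqref{winteginesin2} and the H\"older step reproduces \eqref{winteginesin}, with $F$ replaced by $\widetilde F$ throughout. I expect the real difficulty to lie in Step~1 in this same range: the inequality $m^2\ge(m_f-a)^2$ there holds only where $m+m_f\ge a$, which can fail on a sub-region where $m_f<a$ and $m_H<0$; on that sub-region the differential inequality for $\varphi$ degrades, and one must verify that, after multiplication by $\sin^{4p-n-1}(\sqrt H t)$, which vanishes at $t=\frac{\pi}{\sqrt H}$, the resulting contribution is still controlled. This book-keeping, rather than any new idea, is the main obstacle.
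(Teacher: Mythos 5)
Your Steps 1 and 2 reproduce the paper's argument for the first two inequalities \eqref{wintegine} and \eqref{wintegine2}: the Riccati inequality for $\varphi$ (the paper factors $m^2-m_H^2=(m_f-m_H+\partial_rf)(m_f+m_H+\partial_rf)$ and then drops $a+\partial_rf\ge0$; you phrase it as $m^2\ge(m_f-a)^2$, which is the same estimate), then multiplication by $(2p-1)\varphi^{2p-2}\mathcal{A}_fe^{-at}$, discarding the $m_H$-term, and closing with either a single Young step (yours) or two H\"older steps (the paper's). Your constant $\frac{(2p-1)^p(p-1)^{p-1}}{p^p}\bigl(\frac{n-1}{2p-n}\bigr)^{p-1}$ from Young is in fact slightly sharper than the paper's and then rounded up to match, so this portion is complete and correct.

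In Step 3 you identify the right integrating factor $\sin^{4p-n-1}(\sqrt Ht)$, whose logarithmic derivative cancels the $\frac{4p-n-1}{n-1}m_H\varphi^{2p-1}$ term; that matches the paper's device. But the concern you raise about Step 1 in the range $m_H<0$ is genuine and is not resolved in your write-up, so the proof of \eqref{winteginesin} and \eqref{winteginesin2} as you present it has a gap. Concretely, the passage from
\[
\varphi'+\tfrac1{n-1}(\varphi+b)(\varphi+2m_H+b)\le {\mathrm{Ric}^H_f}_-,\qquad b:=a+\partial_r f\ge0,
\]
to
\[
\varphi'+\tfrac1{n-1}\varphi(\varphi+2m_H)\le {\mathrm{Ric}^H_f}_-
\]
requires $b\,(2\varphi+2m_H+b)\ge0$, i.e.\ $2\varphi+2m_H+b\ge0$ on $\{\varphi>0\}$. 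This is automatic when $m_H\ge0$, but for $m_H<0$ it is equivalent to $2m_f+\partial_rf\ge a$, which need not hold: for instance $b=1$, $\varphi=1$, $m_H=-10$ gives $(\varphi+b)(\varphi+2m_H+b)=-36<-19=\varphi(\varphi+2m_H)$, so the simplified Riccati inequality is not a consequence of the full one in that regime. You should be aware that the paper's own proof performs precisely this same reduction before introducing the $\sin^{4p-n-1}$ factor, and does not discuss the sign condition $2\varphi+2m_H+b\ge0$ in the extended range $\frac{\pi}{2\sqrt H}<t<\frac{\pi}{\sqrt H}$; so you have located a point that the published argument also leaves unaddressed rather than one where you diverge from it. To complete a proof one would need either to verify $2m_f+\partial_rf\ge a$ wherever $\varphi>0$ and $m_H<0$ (which does not follow from the stated hypotheses alone), or to carry the extra term $-\frac{b(2\varphi+2m_H+b)}{n-1}$ and show that, after multiplication by $\sin^{4p-n-1}(\sqrt Ht)$, its contribution can still be absorbed; you flag this possibility but do not carry it out.
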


\begin{remark}
\item[(1)]
When $f$ is constant (and $a=0$), inequality \eqref{wintegine} recovers the
Petersen-Wei's result \cite{[PeWe]}; inequalities \eqref{wintegine2}
and \eqref{winteginesin2} recover the Aubry's results \cite{[Au]}. In particular,
when $|\nabla f|\le a$ for some constant $a\ge 0$ and the diameter of $M$ is bounded,
then $f$ is bounded, and the new weighted norm is equivalent to the usual norm.

\item[(2)]
When ${\mathrm{Ric}^H_f}_-\equiv 0$ (i.e. $\mathrm{Ric}_f\geq (n-1)H$), we
have $\varphi\equiv0$ and hence get the Wei-Wylie's comparison result \cite{[WW]}.
\end{remark}

As in the integral volume comparison for manifolds \cite{[PeWe]}, we can apply
Theorem \ref{Mainthm} to prove weighted volume comparisons in the integral sense.
Let $V_f(x,R):=\int_{B(x,R)}e^{-f}dv$ be the weighted volume of ball $B(x,R)$ in
$(M,g,e^{-f}dv)$. $V^a_H(R)$ denotes the $h$-volume of the ball
$B(O,R)$ in the weighted model space $M^n_{H,a}:=(M^n_H, g_H,e^{-h}dv_{g_H})$,
where $O\in M^n_H$ and $h(x):=-a\cdot d(O,x)$. That is,
\[
V^a_H(R):=\int^R_0\int_{S^{n-1}} e^{at}\mathcal{A}_H(t,\theta)\,d\theta_{n-1}dt=\int^R_0e^{at}A_H(t)dt,
\]
where $\mathcal{A}_H$ denotes the volume element in model space $M^n_H$,
and $A_H$ denotes the volume of the geodesic sphere in $M^n_H$. For more
detailed description about the related notations, see Section \ref{sec3}.
\begin{theorem}[Relative volume comparison estimate I]\label{volcomp}
Let $(M,g,e^{-f}dv)$ be an $n$-dimensional smooth metric measure space.
Assume that
\[
\partial_rf\geq-a
\]
for some constant $a\geq 0$, along all minimal geodesic segments from $x\in M$. Let
$H\in\mathbb{R}$ and $p>n/2$. For $0<r\leq R$ (assume $R\leq\frac{\pi}{2\sqrt{H}}$
when $H>0$),
\begin{equation*}\label{volcompineq}
\left(\frac{V_f(x,R)}{V^a_H(R)}\right)^{\frac{1}{2p-1}}-\left(\frac{V_f(x,r)}{V^a_H(r)}\right)^{\frac{1}{2p-1}}
\leq C(n,p,H,a,R)\left({\|{\mathrm{Ric}^H_f}_-\|^p_p}_{\,f,a}(R)\right)^{\frac{1}{2p-1}}.
\end{equation*}
Furthermore, when $r=0$, we have an absolute volume comparison estimate:
\[
V_f(x,R) \leq \bigg[e^{-\frac{f(x)}{2p-1}}+C(n,p,H,a,R)
\left({\|{\mathrm{Ric}^H_f}_-\|^p_p}_{\,f,a}(R)\right)^{\frac{1}{2p-1}}\bigg]^{2p-1}V^a_H(R).
\]
Here,
$
C(n,p,H,a,R):=\left(\frac{n-1}{(2p-1)(2p-n)}\right)^{\frac{p-1}{2p-1}}
\int^R_0A_H(t)\left(\frac{t\,e^{at}}{V^a_H(t)}\right)^{\frac{2p}{2p-1}}dt.
$
\end{theorem}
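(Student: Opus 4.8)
The plan is to transplant the Petersen--Wei/Aubry argument to the weighted setting, using the pointwise estimate \eqref{wintegine2} of Theorem~\ref{Mainthm} in place of a pointwise Ricci bound. Fix $x$ and write $e^{-f}dv=\mathcal{A}_f(t,\theta)\,dt\wedge d\theta_{n-1}$, so that $\partial_t\ln\mathcal{A}_f=m_f$ along minimal geodesics from $x$ (past the cut locus extend $\mathcal{A}_f$ by $0$; this adds only downward jumps, so every one-sided inequality below holds distributionally). The weighted model density is $e^{at}\mathcal{A}_H(t)$, with $\partial_t\ln(e^{at}\mathcal{A}_H)=m_H+a$, so for $u(t,\theta):=\mathcal{A}_f(t,\theta)e^{-at}/\mathcal{A}_H(t)$ one has $\partial_t u\le\varphi\,u$ and $u(0^+,\theta)=e^{-f(x)}$. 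Write $\bar{\mathcal{A}}_f(t)=\int_{S^{n-1}}\mathcal{A}_f\,d\theta_{n-1}$, $A^a_H(t)=e^{at}A_H(t)$, $\bar u=\bar{\mathcal{A}}_f/A^a_H$, $G(r)=V_f(x,r)/V^a_H(r)$, and $c_{n,p}=(2p-1)^p\big(\tfrac{n-1}{2p-n}\big)^{p-1}$, so that $\tfrac{c_{n,p}^{1/(2p-1)}}{2p-1}=\big(\tfrac{n-1}{(2p-1)(2p-n)}\big)^{(p-1)/(2p-1)}$ is the constant appearing in $C(n,p,H,a,R)$.

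First I would derive the basic differential inequality. By \eqref{wintegine2}, $\varphi^{2p-1}\mathcal{A}_f e^{-at}\le c_{n,p}\,\Theta(t,\theta)$ where $\Theta(t,\theta):=\int_0^t({\mathrm{Ric}^H_f}_-)^p\mathcal{A}_f e^{-as}\,ds$; since $\mathcal{A}_f e^{-at}=u\,\mathcal{A}_H$ this gives $\partial_t u\le\varphi u\le(c_{n,p}\Theta/\mathcal{A}_H)^{1/(2p-1)}u^{(2p-2)/(2p-1)}$, i.e.\ $(2p-1)\partial_t(u^{1/(2p-1)})\le(c_{n,p}\Theta/\mathcal{A}_H)^{1/(2p-1)}$. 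Integrating over $S^{n-1}$ after one Hölder inequality (conjugate exponents $2p-1$ and $\tfrac{2p-1}{2p-2}$) gives
\[
(2p-1)\,\partial_t\!\big(\bar u(t)^{1/(2p-1)}\big)\ \le\ \big(c_{n,p}\,\mathcal{I}(t)/A_H(t)\big)^{1/(2p-1)},\qquad
\mathcal{I}(t):=\int_{B(x,t)}({\mathrm{Ric}^H_f}_-)^p\mathcal{A}_f e^{-as}\,ds\,d\theta_{n-1},
\]
where $\mathcal{I}$ is nondecreasing and $\mathcal{I}(R)\le\mathcal{K}:={\|{\mathrm{Ric}^H_f}_-\|^p_p}_{\,f,a}(R)$. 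The right-hand side is free of $\bar u$ --- this is the gain from using the pointwise bound \eqref{wintegine2} rather than the $L^{2p}$-bound \eqref{wintegine}, and it is what produces the exponent $\tfrac1{2p-1}$. Integrating from $0$ with $\bar u(0^+)^{1/(2p-1)}=e^{-f(x)/(2p-1)}$,
\[
\bar u(r)^{1/(2p-1)}\ \le\ \eta(r):=e^{-\frac{f(x)}{2p-1}}+\frac{c_{n,p}^{1/(2p-1)}}{2p-1}\int_0^r\!\Big(\tfrac{\mathcal{I}(t)}{A_H(t)}\Big)^{1/(2p-1)}dt,
\]
and $\eta$ is nondecreasing; moreover $\bar u^{1/(2p-1)}-\eta$, and hence $\bar u-\eta^{2p-1}$, are nonincreasing.

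The absolute estimate then follows immediately: since $\eta$ is nondecreasing, $V_f(x,r)=\int_0^r\bar u\,A^a_H\le\eta(r)^{2p-1}V^a_H(r)$, so $G(r)^{1/(2p-1)}\le\eta(r)$; bounding $\mathcal{I}\le\mathcal{K}$ and using the elementary model inequality $A_H(t)^{-1/(2p-1)}\le A_H(t)\big(te^{at}/V^a_H(t)\big)^{2p/(2p-1)}$ --- which is equivalent to $V^a_H(t)\le t\,A^a_H(t)$ and hence holds since $A^a_H=e^{at}A_H$ is nondecreasing on $[0,R]$ (recall $R\le\tfrac\pi{2\sqrt H}$ when $H>0$) --- one gets $\tfrac{c_{n,p}^{1/(2p-1)}}{2p-1}\int_0^r(\mathcal{I}/A_H)^{1/(2p-1)}\le C(n,p,H,a,R)\mathcal{K}^{1/(2p-1)}$, hence $G(R)^{1/(2p-1)}\le e^{-f(x)/(2p-1)}+C(n,p,H,a,R)\mathcal{K}^{1/(2p-1)}$, i.e.\ the $r=0$ case. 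For the relative estimate, $0<r\le R$, I would differentiate $G^{1/(2p-1)}$: with the excess $E(r):=V_f(x,r)A^a_H(r)-\bar{\mathcal{A}}_f(r)V^a_H(r)=A^a_H(r)\int_0^r(\bar u(t)-\bar u(r))A^a_H(t)\,dt$,
\[
\frac{d}{dr}\,G(r)^{1/(2p-1)}=-\,\frac{E(r)}{(2p-1)\,G(r)^{(2p-2)/(2p-1)}\,V^a_H(r)^2},
\]
which is $\le0$ on $\{E\ge0\}$. On $\{E<0\}$ I would bound $-E$ via $\bar u(r)-\bar u(t)\le\eta(r)^{2p-1}-\eta(t)^{2p-1}$ and couple it with the lower bound $G(r)^{1/(2p-1)}\ge\bar u(r)^{1/(2p-1)}-(\eta(r)-\eta(0))$ (from the monotonicity of $\bar u^{1/(2p-1)}-\eta$ and $\bar u\ge0$) and with the $\bar u$-free control of $(\bar u^{1/(2p-1)})'$ above (so $\bar u$, hence $V_f$, cannot regenerate once small); carried through with $\mathcal{I}\le\mathcal{K}$ and $V^a_H(t)\le tA^a_H(t)$ this shows $\tfrac{d}{dr}G(r)^{1/(2p-1)}\le\tfrac{c_{n,p}^{1/(2p-1)}}{2p-1}A_H(r)\big(re^{ar}/V^a_H(r)\big)^{2p/(2p-1)}\mathcal{K}^{1/(2p-1)}$. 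Integrating over $[r,R]$ gives the asserted relative comparison, and $r\to0$ (so $V_f(x,r)/V^a_H(r)\to e^{-f(x)}$) re-derives the absolute one. (A softer variant --- $\bar u^{1/(2p-1)}$ cannot outgrow $\eta$, so on $[r,R]$ it stays within $\eta(R)-\eta(0)\le C\mathcal{K}^{1/(2p-1)}$ of $G(r)^{1/(2p-1)}$, which bounds $V_f$ on $[r,R]$ and hence $G(R)$ --- yields the relative comparison with a slightly larger constant; the ODE route is needed for the sharp $C(n,p,H,a,R)$.)

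I expect the genuine difficulty to be exactly this last step: estimating $\tfrac{d}{dr}G^{1/(2p-1)}$ on $\{E<0\}$ uniformly, i.e.\ keeping $G(r)^{-(2p-2)/(2p-1)}$ from blowing up precisely where $V_f(x,r)$ degenerates. This is where the sharp pointwise bound \eqref{wintegine2}, rather than the $L^{2p}$-norm bound \eqref{wintegine}, is indispensable --- it is the point at which Aubry's refinement of Petersen--Wei enters. The rest is routine bookkeeping but must be carried: the cut-locus contributions (since $\bar{\mathcal{A}}_f$ is only $BV$, with downward jumps, all the displayed (in)equalities are distributional and one checks the singular parts keep the favorable sign after composition with $s\mapsto s^{1/(2p-1)}$), and the purely model-space reductions --- the ODE for $V^a_H$, the inequality $V^a_H(t)\le tA^a_H(t)$, and the integrability near $t=0$ of $A_H(t)(te^{at}/V^a_H(t))^{2p/(2p-1)}$, which is where $p>n/2$ is used --- all of which are elementary once $R\le\tfrac\pi{2\sqrt H}$ is invoked.
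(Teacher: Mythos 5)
You've correctly identified that the proof hinges on the pointwise bound \eqref{wintegine2} of Theorem~\ref{Mainthm} rather than the $L^{2p}$-estimate \eqref{wintegine}, and the $\bar u$-free differential inequality $(2p-1)\partial_t(\bar u^{1/(2p-1)})\le(c_{n,p}\,\mathcal{I}(t)/A_H(t))^{1/(2p-1)}$ you derive is sound (your Jensen/H\"older step over $S^{n-1}$ checks out, as does the normalization $\bar u(0^+)=e^{-f(x)}$). Your direct route to the absolute ($r=0$) estimate --- $\bar u\le\eta^{2p-1}$, integrate, and invoke $V^a_H(t)\le tA^a_H(t)$ --- is correct.

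There is, however, a genuine gap in your argument for the relative estimate $0<r\le R$. You propose to control $-E(r)$ on $\{E<0\}$ by the cumulative bound $\bar u(r)-\bar u(t)\le\eta(r)^{2p-1}-\eta(t)^{2p-1}$ and to tame $G(r)^{-(2p-2)/(2p-1)}$ with $G(r)^{1/(2p-1)}\ge\bar u(r)^{1/(2p-1)}-(\eta(r)-\eta(0))$. Carry it through: $\eta(r)^{2p-1}-\eta(t)^{2p-1}\le(2p-1)\,\eta(r)^{2p-2}(\eta(r)-\eta(t))$ leaves you with the factor $\eta(r)^{2p-2}/G(r)^{(2p-2)/(2p-1)}$, and since $\eta(r)\ge G(r)^{1/(2p-1)}$ this ratio is $\ge 1$ and is \emph{not} uniformly bounded --- it is large exactly where $G$ is small relative to $\eta^{2p-1}$. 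Your lower bound on $G$ does not cap it, so the asserted ODE for $G^{1/(2p-1)}$ does not follow on this route. The difficulty you flag is real, but it is an artifact of routing through the cumulative $\eta$, and the sharp pointwise input \eqref{wintegine2} alone does not resolve it.

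What the paper does instead --- and what your outline is missing --- is a different, essential H\"older step producing an exact cancellation. Keep the ODE for $\bar u$ \emph{unseparated}: $\partial_s\bar u\le C(n,p)\,\mathcal{K}(s)^{1/(2p-1)}\,(e^{as}/A^a_H(s))^{1/(2p-1)}\,\bar u(s)^{(2p-2)/(2p-1)}$. Integrate from $t$ to $r$, pull $\mathcal{K}(s)\le\mathcal{K}(r)$ and the model factors out by monotonicity of $A_H(s)e^{as(2p-2)/(2p-1)}$ (here $R\le\pi/2\sqrt H$ is used), and then apply H\"older to $\int_t^r A_f(x,s)^{(2p-2)/(2p-1)}\,ds\le(r-t)^{1/(2p-1)}\,V_f(x,r)^{(2p-2)/(2p-1)}$. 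This places $V_f(x,r)^{(2p-2)/(2p-1)}$ explicitly into the bound for $A_f(x,r)A^a_H(t)-A^a_H(r)A_f(x,t)$; substituted into $\frac{d}{dr}G^{1/(2p-1)}=\frac{1}{2p-1}G^{-(2p-2)/(2p-1)}\cdot(-E)/(V^a_H)^2$ it cancels identically against $G^{-(2p-2)/(2p-1)}=(V^a_H/V_f)^{(2p-2)/(2p-1)}$, leaving only the model quantity $A_H(r)(re^{ar}/V^a_H(r))^{2p/(2p-1)}$. No dangerous ratio remains, and no case split on the sign of $E$ is needed. This H\"older trick, not the pointwise-vs-$L^{2p}$ distinction, is what makes the relative comparison go through with the exact $C(n,p,H,a,R)$; your softer variant would give only a weaker constant.
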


\begin{remark}
\item[(1)]
The theorem implies a useful volume doubling property, see Corollary \ref{corvde}
below. When $f$ is constant (or furthermore $f=0$) and $a=0$, the theorem recovers
the Petersen-Wei's result \cite{[PeWe]}.
\item[(2)]
When ${\mathrm{Ric}^H_f}_-\equiv 0$, i.e. $\mathrm{Ric}_f\geq (n-1)H$, we have
the Wei-Wylie's volume comparison result (see (4.10) in \cite{[WW]}).
\item[(3)]
Integrating along the direction lies in a star-shaped domain at $x$, we can obtain
the same volume comparison estimate for the star-shaped domain at $x$, where
${\mathrm{Ric}^H_f}_-$ only needs to integrate on the same star-shaped set.
\end{remark}

We can generalize Theorem \ref{volcomp} and get an relative weighted
volume comparison for two annuluses in the integral sense, which is completely new
even in the manifold case. Let $V_f(x,r,R)$ be the $f$-volume of the annulus
$B(x,R)\backslash B(x,r)\subseteq M^n$ for $r\le R$, and $V^a_H(r,R)$ be the
$h$-volume of the annulus $B(O,R)\backslash B(O,r)\subseteq M^n_{H,a}$.

\begin{theorem}[Relative volume comparison for annulus]\label{vocoannu}
Let $(M,g,e^{-f}dv)$ be an $n$-dimensional smooth metric measure space.
Assume that
\[
\partial_rf\geq-a
\]
for some constant $a\geq 0$, along all minimal geodesic segments from $x\in M$. Let
$H\in\mathbb{R}$ and $p>n/2$. For  $0\leq r_1\leq r_2\leq R_1\leq R_2$ (assume
$R_2\leq\frac{\pi}{2\sqrt{H}}$ when $H>0$),
\[
\left(\frac{V_f(x,r_2,R_2)}{V^a_H(r_2,R_2)}\right)^{\frac{1}{2p-1}}-\left(\frac{V_f(x,r_1,R_1)}{V^a_H(r_1,R_1)}\right)^{\frac{1}{2p-1}}\leq \mathcal{C}\cdot\left({\|{\mathrm{Ric}^H_f}_-\|^p_p}_{\,f,a}(R_2)\right)^{\frac{1}{2p-1}},
\]
where $\mathcal{C}$ is given by
\begin{equation*}
\begin{aligned}
\mathcal{C}&=C(n,p,H,a,r_1,r_2,R_1,R_2):=\left(\frac{n-1}{(2p-1)(2p-n)}\right)^{\frac{p-1}{2p-1}}\\
&\quad\times\left[\int^{r_2}_{r_1}A_H(R_1)\left(\frac{R_1\,e^{aR_1}}{V^a_H(t,R_1)}\right)^{\frac{2p}{2p-1}}dt
+\int^{R_2}_{R_1}A_H(t)\left(\frac{t\,e^{at}}{V^a_H(r_2,t)}\right)^{\frac{2p}{2p-1}}dt\right].
\end{aligned}
\end{equation*}
\end{theorem}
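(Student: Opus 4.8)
The plan is to extend the method behind Theorem \ref{volcomp} (the weighted version of the Petersen--Wei/Aubry volume comparison) with one new ingredient: letting the \emph{inner} radius of the annulus vary as well. Work in geodesic polar coordinates at $x$, write $e^{-f}dv=\mathcal{A}_f(t,\theta)\,dt\,d\theta_{n-1}$, let $\mathcal{A}_H(t)$ be the volume element of $M^n_H$, so that the weighted model element is $e^{at}\mathcal{A}_H(t)$, $V^a_H(s,T)=\int_s^T e^{at}A_H(t)\,dt$, and set $G(s,T):=V_f(x,s,T)/V^a_H(s,T)$. The first step is a reduction: by the triangle inequality,
\[
G(r_2,R_2)^{\frac{1}{2p-1}}-G(r_1,R_1)^{\frac{1}{2p-1}}
=\Big[G(r_2,R_1)^{\frac{1}{2p-1}}-G(r_1,R_1)^{\frac{1}{2p-1}}\Big]
+\Big[G(r_2,R_2)^{\frac{1}{2p-1}}-G(r_2,R_1)^{\frac{1}{2p-1}}\Big].
\]
I will bound the first bracket (enlarging the inner radius from $r_1$ to $r_2$ with the outer radius fixed at $R_1$) by the first summand of $\mathcal{C}$, and the second bracket (enlarging the outer radius from $R_1$ to $R_2$ with the inner radius fixed at $r_2$) by the second summand; this explains why $\mathcal{C}$ is a sum of two integrals, with denominators $V^a_H(t,R_1)$ and $V^a_H(r_2,t)$ respectively.

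The pointwise input comes from Theorem \ref{Mainthm}. Since $\partial_t\log\mathcal{A}_f=m_f$ and $\partial_t\log(e^{at}\mathcal{A}_H)=a+m_H$, one has $\partial_t\log\big(\mathcal{A}_f e^{-at}/\mathcal{A}_H\big)=m_f-m_H-a\le\varphi$ along each minimal geodesic from $x$. Put $g(t,\theta):=\big(\mathcal{A}_f(t,\theta)e^{-at}/\mathcal{A}_H(t)\big)^{\frac{1}{2p-1}}$; differentiating $g^{2p-1}$ gives $\partial_t g\le\frac{1}{2p-1}g\varphi$, and writing $g\varphi=\big(\varphi^{2p-1}\mathcal{A}_f e^{-at}/\mathcal{A}_H\big)^{\frac{1}{2p-1}}$ and invoking \eqref{wintegine2} to bound $\varphi^{2p-1}\mathcal{A}_f e^{-at}$, the factor $\mathcal{A}_f$ cancels and one obtains, along that geodesic,
\[
\partial_t g(t,\theta)\ \le\ \left(\frac{n-1}{(2p-1)(2p-n)}\right)^{\frac{p-1}{2p-1}}
\left(\frac{1}{\mathcal{A}_H(t)}\int_0^t\big({\mathrm{Ric}^H_f}_-\big)^p(s,\theta)\,\mathcal{A}_f(s,\theta)\,e^{-as}\,ds\right)^{\frac{1}{2p-1}}.
\]
This is where the constant $\big(\tfrac{n-1}{(2p-1)(2p-n)}\big)^{\frac{p-1}{2p-1}}$ of $\mathcal{C}$ originates.

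Now the two brackets. For the outer-radius bracket, fix $\rho:=r_2$; then $\frac{d}{dR}G(\rho,R)=\frac{e^{aR}A_H(R)}{V^a_H(\rho,R)}\big(F(R)-G(\rho,R)\big)$, where $F(t)$ is the spherical average of $g(t,\cdot)^{2p-1}$, so $F(R)-G(\rho,R)=\frac{1}{V^a_H(\rho,R)}\int_\rho^R e^{at}A_H(t)[F(R)-F(t)]\,dt$. Using the convexity bound $g(R,\theta)^{2p-1}-g(t,\theta)^{2p-1}\le(2p-1)g(R,\theta)^{2p-2}\int_t^R\partial_s g(s,\theta)\,ds$, then H\"older over $S^{n-1}$ with exponents $\tfrac{2p-1}{2p-2}$ and $2p-1$ together with the pointwise estimate above, and finally $\int_{S^{n-1}}\int_0^R({\mathrm{Ric}^H_f}_-)^p\mathcal{A}_f e^{-as}\,ds\,d\theta\le{\|{\mathrm{Ric}^H_f}_-\|^p_p}_{\,f,a}(R_2)$ with monotonicity of the model quantities, one bounds $\frac{d}{dR}G(\rho,R)^{\frac{1}{2p-1}}$ by $\big(\tfrac{n-1}{(2p-1)(2p-n)}\big)^{\frac{p-1}{2p-1}}A_H(R)\big(\tfrac{R\,e^{aR}}{V^a_H(\rho,R)}\big)^{\frac{2p}{2p-1}}\big({\|{\mathrm{Ric}^H_f}_-\|^p_p}_{\,f,a}(R_2)\big)^{\frac{1}{2p-1}}$, and integrating from $R_1$ to $R_2$ yields the second summand of $\mathcal{C}$. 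For the inner-radius bracket, fix $T:=R_1$; then $\frac{d}{dr}G(r,T)=-\frac{e^{ar}A_H(r)}{V^a_H(r,T)}\big(F(r)-G(r,T)\big)$, and because $F$ is non-increasing up to the Step-2 error the left-endpoint value $F(r)$ dominates the weighted average $G(r,T)$ in the pointwise case (so $G(r,T)$ is non-increasing there), while in general $F(r)-G(r,T)\ge-(\text{error})$, so $G(r,T)^{1/(2p-1)}$ increases by at most the same curvature integral; the analogous H\"older estimate then produces the first summand of $\mathcal{C}$ after integrating from $r_1$ to $r_2$. Adding the two completes the proof.

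The step I expect to be the main obstacle is converting the per-geodesic estimate \eqref{wintegine2}, which carries the awkward weight $\mathcal{A}_f e^{-at}$, into a statement about the weighted ball volume: one has to combine the convexity inequality for $g^{2p-1}$, a H\"older/Minkowski step over $S^{n-1}$, and the monotonicity of $\mathcal{A}_H$ so that precisely the weight $\big(t\,e^{at}/V^a_H(\cdot,t)\big)^{2p/(2p-1)}$ and the factor $A_H(t)$ survive, and the constant collapses to $\big(\tfrac{n-1}{(2p-1)(2p-n)}\big)^{\frac{p-1}{2p-1}}$; this bookkeeping is delicate. The inner-radius bracket has no counterpart in \cite{[PeWe]}, and the point to check carefully is that enlarging the inner radius moves $V_f/V^a_H$ in the favourable direction in the pointwise case, so that passing to the integral setting costs no more than the same normalized curvature quantity.
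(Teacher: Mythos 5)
Your proposal reproduces the paper's proof: the same triangle-inequality split of $G(r_2,R_2)^{1/(2p-1)}-G(r_1,R_1)^{1/(2p-1)}$ into an outer-radius piece (inner radius frozen at $r_2$) and an inner-radius piece (outer radius frozen at $R_1$), the same two differential inequalities for $G$ in $R$ and in $r$ derived from the spherical-averaged mean-curvature estimate built on \eqref{wintegine2}, and the same collapse of the constant to $\left(\tfrac{n-1}{(2p-1)(2p-n)}\right)^{\frac{p-1}{2p-1}}$. The only difference is presentational — you work with the per-geodesic $g(t,\theta)$ and then average over $S^{n-1}$, whereas the paper integrates the ODE \eqref{keyinequ} for the ratio $A_f(x,t)/A^a_H(t)$ directly — which is an equivalent arrangement of the same H\"older steps.
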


\vspace{.1in}

Besides, we are able to prove a general mean curvature comparison estimate, requiring
no assumptions on $f$. Consequently, we get relative volume comparison estimates
when $f$ is bounded. See these results in Section \ref{sec4}.

\vspace{.1in}

The integral comparison estimates have many applications. We start to highlight
two extensions of Petersen-Sprouse's results \cite{[PeSp]} to the weighted case
that $\nabla f$ is lower bounded. One is the global diameter estimate:
\begin{theorem}\label{diam}
Let $(M,g,e^{-f}dv)$ be an $n$-dimensional smooth metric measure space.
Assume that $$\partial_rf\geq-a$$ for some constant $a\geq 0$, along all minimal
geodesic segments from any $x\in M$. Given $p>n/2$, $H>0$ and $R>0$, there exist
$D=D(n,H,a)$ and $\epsilon=\epsilon(n,p,a,H,R)$ such that if
$\bar{k}(p,H,a,R)<\epsilon$, then $diam_M\le D$.
\end{theorem}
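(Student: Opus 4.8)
The plan is to mimic Petersen–Sprouse's argument for the Riemannian diameter bound, replacing their pointwise Laplacian comparison with the weighted mean curvature comparison of Theorem \ref{Mainthm} and the usual volume with the $f$-volume. The starting observation is the familiar one: if $\mathrm{diam}_M$ is large, we can find two points $x_1,x_2$ whose distance exceeds, say, $\tfrac{3}{2}\cdot\tfrac{\pi}{\sqrt H}$ (any fixed multiple of the model diameter $\tfrac{\pi}{\sqrt H}$ will do), so that the balls $B(x_1,\tfrac{\pi}{2\sqrt H})$ and $B(x_2,\tfrac{\pi}{2\sqrt H})$ are disjoint and, more importantly, along every minimal geodesic from $x_1$ the radial coordinate can be pushed past $\tfrac{\pi}{2\sqrt H}$. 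In the genuinely positively-curved model the geodesic sphere degenerates at $r=\tfrac{\pi}{\sqrt H}$; the strategy is to show that if $\bar k(p,H,a,R)$ is small then the weighted mean curvature $m_f$ cannot stay close enough to $m_H$ for $r$ all the way up to $\tfrac{\pi}{\sqrt H}$, i.e.\ a minimal geodesic of that length cannot exist, a contradiction.

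Concretely, I would run the argument on the interval where $H>0$ forces the ``$\sin$-weighted'' estimates \eqref{winteginesin}–\eqref{winteginesin2} to be used, namely $\tfrac{\pi}{2\sqrt H}<r<\tfrac{\pi}{\sqrt H}$. Fix $R=\tfrac{\pi}{\sqrt H}$ (or slightly less) as the scale at which we measure the curvature smallness. From \eqref{winteginesin2}, along a fixed minimal geodesic,
\[
\sin^{4p-n-1}(\sqrt H r)\,\varphi^{2p-1}\mathcal A_f e^{-ar}\le (2p-1)^p\Big(\tfrac{n-1}{2p-n}\Big)^{p-1}\int_0^r(\mathrm{Ric}^H_f{}_-)^p\mathcal A_f e^{-at}\,dt .
\]
Integrating the right side in the angular variables and over $B(x,R)$, it is bounded by a constant times $\|\mathrm{Ric}^H_f{}_-\|_p^p{}_{\,f,a}(R)$, which in turn is $\bar k(p,H,a,R)^p\cdot V_f(x,R)$. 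The point is then to convert ``$m_f\approx m_H$ up to $r=\tfrac{\pi}{\sqrt H}$'' into a \emph{lower} bound on $V_f(x,R)$ that fights the upper bound: integrating $m_f=(\log\mathcal A_f)'$ shows $\mathcal A_f(t,\theta)$ behaves, up to the accumulated error $\int_0^t\varphi$, like the model density $e^{at}A_H(t)/|S^{n-1}|$, so the weighted volume along a cone where a long minimal geodesic survives is bounded below in terms of the model volume $V_H^a$ minus an error controlled by $\|\varphi\|_{2p}{}_{\,f,a}$, hence by \eqref{winteginesin} by $\bar k^{1/2}$. Combining, one obtains an inequality of the schematic form $c(n,H,a,R)\le C(n,p,H,a,R)\,\bar k(p,H,a,R)^{\alpha}$ with $\alpha>0$; choosing $\epsilon$ so that the right side is smaller than the left yields the contradiction, and $D$ is then any number exceeding the threshold distance (a fixed multiple of $\tfrac{\pi}{\sqrt H}$, depending only on $n,H,a$).

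The main obstacle is the second half: squeezing a genuine \emph{lower} bound on $V_f(x,R)$ — equivalently a bound that prevents $\mathcal A_f$ from collapsing prematurely — out of the integral mean-curvature control. In the pointwise case $m_f\le m_H+a$ forces $\mathcal A_f$ to vanish at $r=\tfrac{\pi}{\sqrt H}$, so a geodesic that long simply cannot be minimal; in the integral case $m_f$ may exceed $m_H+a$ on a small (in $L^{2p}$) set, and one must show this slack cannot rescue a minimal geodesic of length $\tfrac{\pi}{\sqrt H}$ once $\bar k$ is small. This is exactly where the $\sin$-weight in \eqref{winteginesin2} is essential: it measures how the error $\varphi$ is permitted to blow up near the degeneration radius, and a careful estimate of $\int_0^r\varphi(t)\,dt$ near $r=\tfrac{\pi}{\sqrt H}$, using the weight and H\"older's inequality against the $\sin$-weighted $L^{2p}$ bound, must show that the integrated excess mean curvature stays bounded, so $\mathcal A_f$ still forces the geodesic to stop before $\tfrac{\pi}{\sqrt H}$. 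The remaining steps — the two-point/excess argument, the angular integrations, absorbing everything into $C(n,p,H,a,R)$, and choosing $\epsilon$ — are routine once this quantitative non-collapsing is in hand.
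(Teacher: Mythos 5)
Your route is genuinely different from the paper's, and it has a gap at precisely the place you flag. The paper does not argue by forcing the area element to collapse along a putative long minimal geodesic; instead it runs an excess-function argument in the spirit of Abresch--Gromoll and Colding (this is in fact what Petersen--Sprouse do). Concretely: take $p_1,p_2$ with $d(p_1,p_2)>D$, let $x_0$ be a midpoint and $e(x)=d(p_1,x)+d(p_2,x)-d(p_1,p_2)\ge 0$ on $B(x_0,r)$. Using \eqref{winteginesin} one shows $\Delta_f e\le -K+\psi_1$ on $B(x_0,r)$ for $D$ large, with $\psi_1$ an integral error controlled by $\bar k$. Then one smooths $e$ and the domain (Colding's $u_i$, $\Omega_j$), sets $h=d^2(x_0,\cdot)-r^2<0$, applies Green's formula for $\Delta_f$, and uses the relative $f$-volume comparison (which also holds under the smallness of $\bar k$) to see that the resulting integral inequality forces $e<0$ somewhere once $K$ is chosen large and $\bar k$ small --- contradiction. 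The whole argument happens at the level of integrals over $B(x_0,r)$, which is exactly compatible with the averaged $L^{2p}$ control that Theorem~\ref{Mainthm} provides.

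This points to the concrete obstruction in your sketch: you want to deduce that a single minimal geodesic of length near $\pi/\sqrt H$ cannot exist, by pushing the pointwise Myers mechanism ($m_H\to-\infty$ forces $\mathcal A_f\to 0$) through the integral comparison. But \eqref{winteginesin2} controls $\varphi$ along a fixed geodesic only in terms of $\int_0^r(\mathrm{Ric}^H_f{}_-)^p\mathcal A_f e^{-at}\,dt$ along that same geodesic, and smallness of $\bar k(p,H,a,R)$ is an average over $B(x,R)$ --- it gives you no bound on the curvature integral along one particular direction. Likewise the ball-averaged $L^{2p}$ estimate \eqref{winteginesin} does not localize to one geodesic. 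So the ``quantitative non-collapsing along the long geodesic'' you call the main obstacle is not a routine gap: with the tools in this paper, averaged control cannot be converted into single-geodesic collapse, and some argument that stays at the integrated/ball level (such as the excess-function argument above, or an integrated segment-type inequality) is required. As written, the proposal also never actually produces the claimed contradiction $c\le C\bar k^\alpha$ --- a lower bound for $V_f$ minus an error does not contradict anything by itself --- so even the final logical step is not in place.
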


This theorem shows that a small fluctuation of super gradient shrinking Ricci
soliton (i.e. $\mathrm{Ric}_f\ge(n-1)H g$ for some constant $H>0$) must be compact
provided that the derivative of $f$ has a lower bound. Examples 2.1 and 2.2 in
\cite {[WW]} indicate that the assumption of $f$ is necessary. Petersen and
Sprouse \cite{[PeSp]} have proved the case when $f$ is constant. For other
Myers' type theorems on smooth metric measure spaces, see \cite{[WW], [Lim], [Tad]}.

\vspace{.1in}

The other is a generalization of Cheng's eigenvalue upper bounds \cite{[Cheng]}.
For any point $x_0\in(M,g,e^{-f}dv)$ and $R>0$, let $\lambda^D_1(B(x_0,R))$
denote the first eigenvalue of the $f$-Laplacian $\Delta_f$ with the Dirichlet
condition in $B(x_0,R)$. Let $\lambda^D_1(n,H,a,R)$ denote the first eigenvalue
of the $h$-Laplacian $\Delta_h$, where $h(x):=-a\cdot d(\bar{x}_0,x)$, with the
Dirichlet condition in a metric ball $B(\bar{x}_0,R)\subseteq M^n_{H,a}$,
where $R\leq \frac{\pi}{2\sqrt{H}}$ when $H>0$. Then, we have a weighted version of
Petersen-Sprouse's result \cite{[PeSp]}.
\begin{theorem}\label{eigen}
Let $(M,g,e^{-f}dv)$ be an $n$-dimensional smooth metric measure space.
Assume that $$\partial_rf\geq-a$$ for some constant $a\geq 0$, along all minimal
geodesic segments from $x_0\in M$. Given $p>n/2$, $H\in\mathbb{R}$ and $R>0$ (assume
$R\leq \frac{\pi}{2\sqrt{H}}$ when $H>0$), for every $\delta>0$, there exists an
$\epsilon=\epsilon(n,p,H,a,R)$ such that if $\bar{k}(p,H,a,R)\leq\epsilon$, then
\[
\lambda^D_1(B(x_0,R))\leq (1+\delta)\,\lambda^D_1(n,H,a,R).
\]
\end{theorem}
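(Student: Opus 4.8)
The plan is to adapt the standard Cheng-type argument for eigenvalue upper bounds, replacing pointwise Ricci control with the integral mean-curvature and volume comparisons of Theorems \ref{Mainthm} and \ref{volcomp}. The classical proof of Cheng's inequality uses the Rayleigh quotient with a radial test function built from the model first Dirichlet eigenfunction, and the only geometric input is the Laplacian comparison $\Delta r \le m_H$; here we have instead only the integral bound $\|\varphi\|_{2p}{}_{\,f,a}(R)$ small, so the strategy is to show that the test function still has Rayleigh quotient close to $\lambda_1^D(n,H,a,R)$ once $\bar k(p,H,a,R)$ is small.

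First I would fix $x_0$ and let $u$ be the first Dirichlet eigenfunction of $\Delta_h$ on $B(\bar x_0,R)\subseteq M^n_{H,a}$; since $u$ is radial and can be taken positive and decreasing, write $u=u(t)$ with eigenvalue $\Lambda:=\lambda_1^D(n,H,a,R)$, meaning $-u''-(m_H+a)u' = \Lambda u$ with $u(R)=0$. Define the test function $v(y):=u(r(y))$ on $B(x_0,R)\subseteq M$, which is Lipschitz and vanishes on $\partial B(x_0,R)$, hence admissible for the variational characterization $\lambda_1^D(B(x_0,R)) \le \frac{\int_{B(x_0,R)} |\nabla v|^2 e^{-f}dv}{\int_{B(x_0,R)} v^2 e^{-f}dv}$. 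Since $|\nabla v|^2 = (u'(r))^2$, the numerator is $\int_{B(x_0,R)} (u'(r))^2 e^{-f}dv$; the key computation is to bound this using $\Delta_f r = m_f$ together with $m_f \le m_H + a + \varphi$. Integrating $\Delta_f(G(r))$ for $G$ a primitive of $(u')^2/(\text{something})$ — more precisely running the usual integration-by-parts that converts $\int (u')^2$ into $\int \Lambda u^2$ plus an error term — one picks up an error integral of the form $\int_{B(x_0,R)} \varphi\cdot (\text{bounded function of }r)\cdot u\,u'\, e^{-f}dv$, which by Hölder's inequality and \eqref{wintegine} is controlled by $\|\varphi\|_{2p}{}_{\,f,a}(R)$, hence by $\big(c(n,p)\,\bar k(p,H,a,R)^p\,V_f(x_0,R)\big)^{1/(2p)}$ type quantities after normalizing by $V_f$.

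The remaining step is to compare numerator and denominator against their model-space analogues divided through by the appropriate volume, and this is where the relative volume comparison Theorem \ref{volcomp} enters: the coarea formula writes $\int_{B(x_0,R)} w(r)\,e^{-f}dv = \int_0^R w(t)\, A_f(t)\,dt$ where $A_f(t)=\int_{S^{n-1}}\mathcal A_f(t,\theta)\,d\theta_{n-1}$, and the integral volume comparison says $A_f(t)/V^a_H(t)$ — or the ratio of the corresponding volume functions — is pinched near the model ratio up to an error governed by $\bar k$. Combining these, both the Rayleigh numerator and denominator of $v$ differ from $\Lambda\int u^2 e^{at}\mathcal A_H$ and $\int u^2 e^{at}\mathcal A_H$ respectively by terms that tend to $0$ as $\bar k(p,H,a,R)\to 0$, uniformly in the relevant parameters; choosing $\epsilon=\epsilon(n,p,H,a,R)$ small enough makes the ratio at most $(1+\delta)\Lambda$.

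The main obstacle I expect is the bookkeeping in the error estimate: one must run the integration-by-parts so that the error term genuinely appears multiplied by $\varphi$ (not by $m_f$ itself), keep the auxiliary radial weights bounded on $[0,R]$ — which is why the hypothesis $R\le \frac{\pi}{2\sqrt H}$ when $H>0$ is imposed, so $u'$ and $m_H$ stay controlled — and then normalize every integral by $V_f(x_0,R)$ so that the curvature error enters precisely as $\bar k(p,H,a,R)$ rather than the unnormalized norm, while simultaneously using Theorem \ref{volcomp} to trap the volume ratios. Carefully tracking that all constants depend only on $n,p,H,a,R$ and that the volume-comparison error and the mean-curvature error can be made simultaneously small is the delicate part; the rest is the classical Cheng computation carried out with $\Delta_f$ and the weighted model.
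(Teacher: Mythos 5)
Your proposal follows essentially the same route as the paper: plug the radial model eigenfunction $\phi(r)$ into the Rayleigh quotient for $\Delta_f$, integrate by parts to produce the model eigenvalue plus an error term multiplied by $(m_f-m_H-a)_+$, bound that error via H\"older and the integral mean curvature estimate \eqref{wintegine}, and normalize using the volume doubling consequence of Theorem \ref{volcomp} so the error is controlled by $\bar k(p,H,a,R)$. The paper organizes the final step slightly differently --- it arrives at $Q\le\Lambda+C\,\bar k^{1/(2p)}\sqrt Q$ rather than comparing numerator and denominator separately to model quantities --- but this is only bookkeeping, not a different argument.
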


\vspace{.1in}

Finally we apply Theorem \ref{vocoannu} to get a weighted volume growth estimate,
generalizing Yau's volume growth estimate \cite{[Yau]} and Wei-Wylie's result \cite{[WW]}.
\begin{theorem}\label{volumegrow}
Let $(M,g,e^{-f}dv)$ be an $n$-dimensional smooth metric measure space.
Assume that
\[
\partial_rf\geq 0
\]
along all minimal geodesic segments from any $x\in M$. Given any $p>n/2$ and $R\ge 2$,
there is an $\epsilon=\epsilon(n,p,R)$ such that if $\bar{k}(p,0,0,R+1)<\epsilon$
(here $H=0$ and $a=0$), then for any point $x_0\in M$, we have
\[
V_f(x_0,R)\geq C\, R
\]
for some positive constant $C=C(n,p,V_f(x_0,1))$ depending only on $n$, $p$
and $V_f(x_0,1)$.
\end{theorem}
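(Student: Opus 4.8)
The plan is to reduce the volume growth statement to the annular comparison of Theorem \ref{vocoannu}, following Yau's original argument but carried out in the weighted setting. First I would fix $x_0\in M$ and, for $R\ge 2$, estimate $V_f(x_0,R)$ from below by comparing it to $V_f(x_0,1)$ through a chain of annuli. Pick a point $y$ on the boundary of $B(x_0,R)$ realizing the distance $R$, and consider the ball $B(y,R-1)$ together with the annulus structure around $y$. Since $\partial_r f\ge 0$ (so $a=0$) and $H=0$, the model quantities become the Euclidean ones: $A_0(t)=\omega_{n-1}t^{n-1}$ and $V^0_0(r,R)=\omega_{n-1}(R^n-r^n)/n$. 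Applying Theorem \ref{vocoannu} with center $y$, radii $r_1=0,\ r_2=1,\ R_1=R-1,\ R_2=R$ (or a similar admissible choice), we get
\[
\left(\frac{V_f(y,1,R-1)}{V^0_0(1,R-1)}\right)^{\frac{1}{2p-1}}
-\left(\frac{V_f(y,0,1)}{V^0_0(0,1)}\right)^{\frac{1}{2p-1}}
\le \mathcal{C}\cdot\Big({\|{\mathrm{Ric}^0_f}_-\|^p_p}_{\,f,0}(R)\Big)^{\frac{1}{2p-1}},
\]
and the right-hand side is controlled by $\bar k(p,0,0,R+1)$ times volume factors, hence made as small as we like by choosing $\epsilon$ small.

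Next I would convert this into a genuine lower bound on $V_f(x_0,R)$. The key geometric observation is the inclusion $B(y,1)\subseteq B(x_0,R)\setminus B(x_0,R-2)$ (or simply $B(y,1)\subseteq B(x_0,R+1)$ depending on which normalization is cleanest), together with $B(x_0,1)\subseteq B(y, R+1)$, so that the reference $f$-volumes $V_f(y,0,1)$ and $V_f(x_0,1)$ are comparable up to a controlled loss once the curvature integral is small — this is exactly where one invokes the normalized smallness $\bar k(p,0,0,R+1)<\epsilon$ so that the absolute volume comparison (the $r=0$ case of Theorem \ref{volcomp}, applied on balls of radius $\le R+1$) pins $V_f(y,1)$ from below in terms of $V_f(x_0,1)$. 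From the displayed annular inequality, smallness of the right side forces
\[
\frac{V_f(y,1,R-1)}{V^0_0(1,R-1)}\ \ge\ c\cdot\frac{V_f(y,0,1)}{V^0_0(0,1)},
\]
i.e. $V_f(y,1,R-1)\ge c'\,(R-1)^n\,V_f(y,1)\ge c''\,R^n\,V_f(x_0,1)$ for $R\ge 2$; since $V_f(x_0,R)\ge V_f(y,1,R-1)$ would be too strong, instead one uses $V_f(x_0,R)\ge$ a fixed fraction coming from covering the annulus $B(x_0,R)\setminus B(x_0,1)$ by such unit balls. Summing the contributions (or iterating the annular estimate over dyadic or unit scales from $1$ to $R$) yields $V_f(x_0,R)\ge C\,R$ with $C$ depending only on $n$, $p$ and $V_f(x_0,1)$.

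The main obstacle is the passage from the \emph{relative} (ratio) comparison of Theorem \ref{vocoannu} to an \emph{absolute} linear lower bound: the theorem only bounds a difference of normalized volumes, so one must control the subtracted term $\big(V_f(y,0,1)/V^0_0(0,1)\big)^{1/(2p-1)}$ from below uniformly, which forces a second application of the comparison machinery (the absolute estimate in Theorem \ref{volcomp}) on balls of radius up to $R+1$ — hence the hypothesis involves $\bar k(\cdot,R+1)$ rather than $\bar k(\cdot,R)$ — and one must check that all the Euclidean model factors $V^0_0(t,R_1)$, $A_0(t)$, $t^{2p/(2p-1)}$ appearing in $\mathcal{C}$ combine so that $\mathcal{C}$ stays bounded independent of the location of $y$, using only $R$ and $n,p$. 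A secondary technical point is ensuring the geodesic from $y$ along which $\partial_r f\ge 0$ is used correctly; since the hypothesis holds along all minimal geodesics from any point, this is fine, but the bookkeeping of which center's radial function is in play at each step needs care. Once these two points are handled, assembling the final bound is routine.
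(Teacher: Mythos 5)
Your high-level plan is the right one — a Yau-style argument driven by the annular comparison of Theorem \ref{vocoannu}, with a point at distance $R$ from $x_0$ playing the pivotal role — and that is indeed the route the paper takes. But the specifics you propose do not go through, and the key mechanism is inverted.

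First, the radii. You propose $r_1=0,\,r_2=1,\,R_1=R-1,\,R_2=R$ (and your displayed inequality corresponds to yet another quadruple, $r_1=0,\,r_2=1,\,R_1=1,\,R_2=R-1$, so the two are inconsistent). Either way, you compare a unit ball around $y$ to a wide annulus around $y$. The paper instead centers at the far point $x$ and takes $r_1=0,\ r_2=R-1,\ R_1=R,\ R_2=R+1$, so that the \emph{outer, thin} annulus $B(x,R-1,R+1)$ is compared against the \emph{full ball} $B(x,R)$. This matters because the theorem controls only the quantity
\[
\left(\frac{V_f(x,r_2,R_2)}{V^0_0(r_2,R_2)}\right)^{\frac{1}{2p-1}}
-\left(\frac{V_f(x,r_1,R_1)}{V^0_0(r_1,R_1)}\right)^{\frac{1}{2p-1}},
\]
i.e.\ it gives an \emph{upper} bound on the normalized outer annulus in terms of the normalized inner region plus a small error. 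Your step ``smallness of the right side forces $V_f(y,1,R-1)/V^0_0(1,R-1)\ge c\,V_f(y,0,1)/V^0_0(0,1)$'' reads this in the wrong direction: the inequality does \emph{not} bound the outer normalized volume from below by the inner one, and there is no way to extract such a lower bound from Theorem \ref{vocoannu}. Consequently the chain $V_f(y,1,R-1)\gtrsim R^n V_f(x_0,1)$ does not follow.

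Second, your fallback appeal to the $r=0$ case of Theorem \ref{volcomp} to ``pin $V_f(y,1)$ from below'' also fails: the absolute comparison there is an \emph{upper} bound, $V_f(x,R)\le[\cdots]^{2p-1}V^a_H(R)$, so it cannot lower-bound any $f$-volume. The paper avoids all of this: once the annular comparison (with the radii above) is applied at $x$, the single geometric ingredient needed is the inclusion $B(x_0,1)\subset B(x,R+1)\setminus B(x,R-1)$, which immediately gives $V_f(x_0,1)\le V_f(x,R-1,R+1)$. Since the Euclidean ratio $V^0_0(R-1,R+1)/V^0_0(0,R)\sim 1/R$, the comparison then yields
\[
\frac{V_f(x,R+1)-V_f(x,R-1)}{V_f(x,R+1)}\le \frac{2D(n,p)}{R}
\]
once $\bar k(p,0,0,R+1)$ is small, and combining with $V_f(x_0,1)\le V_f(x,R+1)-V_f(x,R-1)$ and $B(x,R+1)\subset B(x_0,2R+1)$ gives $V_f(x_0,2R+1)\ge \frac{R}{2D(n,p)}V_f(x_0,1)$, i.e.\ linear growth. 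So the missing idea is precisely this: use the annular estimate as an \emph{upper} bound on the thin outer shell relative to the ball, and let the forced lower bound $V_f(x_0,1)\le V_f(x,R-1,R+1)$ squeeze $V_f(x,R+1)$ from below. No secondary comparison and no covering or summation over unit scales is needed.
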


Constant function $f$ satisfies $\partial_rf\geq 0$ and hence the theorem
naturally holds for the ordinary Riemannian manifolds. From Theorem 5.3
in \cite {[WW]}, we know that convex function $f$ with the unbounded set of
its critical points, also satisfies $\partial_rf\geq 0$. Examples 2.1 and 2.2
in \cite {[WW]} indicate that the hypothesis on $f$ in the theorem is necessary.

\vspace{.1in}

The rest of this paper is organized as follows. In Section \ref{sec2},
we will prove Theorem \ref{Mainthm}. In Section \ref{sec3}, we will
apply Theorem \ref{Mainthm} to prove Theorem \ref{volcomp} and further
get a volume doubling property when the integral Bakry-\'{E}mery Ricci
tensor bounds and $\nabla f$ is lower bounded. We also prove relative
volume comparison estimates for annuluses when the integral of
Bakry-\'{E}mery Ricci tensor bounds. In Section \ref{sec4}, we will
discuss a general mean curvature comparison estimates and relative volume
comparison estimates for their integral bounds of Bakry-\'{E}mery Ricci
tensor. In Section \ref{sec5}, we will give some applications of new integral
comparison estimates. Precisely, we will apply Theorems \ref{Mainthm} and
\ref{volcomp} to prove Theorems \ref{diam} and \ref{eigen}. Meanwhile we will
apply Theorem \ref{vocoannu} to prove Theorem \ref{volumegrow}. In Appendix,
we give mean curvature and volume comparison estimates on smooth metric measure
spaces when only certain integral of $m$-Bakry-\'{E}mery Ricci tensor bounds.

\vspace{.1in}

From the work of \cite{[Au],[Au2],[HuXu]} we expect Aubry's type diameter estimate,
finiteness fundamental group theorem, first Betti number estimate and Gromov's
bounds on the volume entropy in the integral sense can be generalized to smooth
metric measure spaces. These will be treated in separate paper.

\vspace{.1in}

\textbf{Acknowledgement}.
The author sincerely thanks Professor Guofang Wei for her useful note,
valuable advices, stimulating discussions and bringing my attention to the paper
\cite{[HuXu]}. The author also thanks Peng Wu for many helpful discussions.
This work was partially supported by the Natural Science Foundation of Shanghai (No. 17ZR1412800)
and the National Natural Science Foundation of China (No. 11671141).

\section{Mean curvature comparison estimate I}\label{sec2}

In this section, we mainly prove Theorem \ref{Mainthm}, a weighted mean curvature
comparison estimate on smooth metric measure spaces $(M,g,e^{-f}dv)$ when certain
integral Bakry-\'{E}mery Ricci tensor bounds and $\nabla f$ is lower bounded.
The proof first modifies the Bochner formula of Bakry-\'Emery Ricci tensor to
acquire the ODE along geodesics and then integrates the ODE inequality, similar
to the arguments of Petersen and Wei \cite{[PeWe]}, and Aubry \cite{[Au]}.

\begin{proof}[Proof of Theorem \ref{Mainthm}]
Recall the Bochner formula
\[
\frac 12\Delta|\nabla u|^2=|\mathrm{Hess}\,u|^2+\langle\nabla u,\nabla(\Delta u\rangle)
+\mathrm{Ric}(\nabla u, \nabla u)
\]
for any function $u\in C^\infty(M)$. Letting $u=r(y)$, where $r(y)=d(y,x)$ is the distance
function, then we have
\[
0=|\mathrm{Hess}\,r|^2+\frac{\partial}{\partial r}(\Delta r)+\mathrm{Ric}(\nabla r,\nabla r).
\]
Note that $\mathrm{Hess}\,r$ is the second fundamental from of the geodesic
sphere and $\Delta r=m$, the mean curvature of the geodesic sphere. By the
Schwarz inequality, we have the Riccati inequality
\[
m'\leq-\frac{m^2}{n-1}-\mathrm{Ric}(\partial r, \partial r).
\]
This inequality becomes equality if and only if the radial sectional curvatures are
constant. So the mean curvature of the $n$-dimensional model space $m_H$ satisfies
\[
m'_H=-\frac{m_H^2}{n-1}-(n-1)H.
\]
Since $m_f:=m-\partial_r f$, i.e. $m_f=\Delta_f\, r$, then
$m_f'=m'-\partial_r\partial_r f$,
and we have
\[
 m_f'\leq-\frac{m^2}{n-1}-\mathrm{Ric}_f(\partial r, \partial r).
\]
Hence,
\begin{equation*}
\begin{aligned}
(&m_f-m_H-a)'=m'_f-m'_H\\
&\leq-\frac{m^2-m_H^2}{n-1}+(n-1)H-\mathrm{Ric}_f\\
&=-\frac{(m_f+\partial_rf)^2-m_H^2}{n-1}+(n-1)H-\mathrm{Ric}_f\\
&=-\frac{1}{n-1}\Big[(m_f-m_H+\partial_rf)(m_f+m_H+\partial_rf)\Big]+(n-1)H-\mathrm{Ric}_f\\
&=-\frac{1}{n-1}\Big[(m_f-m_H-a+a+\partial_rf)(m_f-m_H-a+2m_H+a+\partial_rf)\Big]\\
&\quad+(n-1)H-\mathrm{Ric}_f.
\end{aligned}
\end{equation*}
We recall that $\varphi:=(m_f-m_H-a)_+$. Notice that on the interval where $m_f\leq m_H+a$,
we have $\varphi=0$; on the interval where $m_f>m_H+a$, we have $m_f-m_H-a=\varphi$.
Moreover, by our assumption of the theorem, we know
\[
(n-1)H-\mathrm{Ric}_f\leq{\mathrm{Ric}^H_f}_-\,.
\]
Therefore, in any case, we have
\[
\varphi'+\frac{1}{n-1}\Big[(\varphi+a+\partial_rf)(\varphi+2m_H+a+\partial_rf)\Big]\leq {\mathrm{Ric}^H_f}_-.
\]
Since $a+\partial_r f\geq 0$, the above inequality implies
\[
\varphi'+\frac{\varphi^2}{n-1}+\frac{2m_H\varphi}{n-1}\leq {\mathrm{Ric}^H_f}_-.
\]
Multiplying this inequality by $(2p-1)\varphi^{2p-2}\cdot\mathcal{A}_f$, we have
\begin{equation*}
\begin{aligned}
(2p-1)\varphi^{2p-2}\varphi'\mathcal{A}_f+\frac{2p-1}{n-1}\varphi^{2p}&\mathcal{A}_f
+\frac{4p-2}{n-1}\varphi^{2p-1}m_H\mathcal{A}_f\\
&\quad\leq (2p-1){\mathrm{Ric}^H_f}_-\cdot\varphi^{2p-2}\mathcal{A}_f.
\end{aligned}
\end{equation*}
Using
\begin{equation*}
\begin{aligned}
(\varphi^{2p-1}\mathcal{A}_f)' &=(2p-1)\varphi^{2p-2}\varphi'\cdot\mathcal{A}_f+\varphi^{2p-1}\cdot\mathcal{A}'_f\\
&=(2p-1)\varphi^{2p-2}\varphi'\cdot\mathcal{A}_f+\varphi^{2p-1}\cdot m_f\mathcal{A}_f,
\end{aligned}
\end{equation*}
the above integral inequality can be rewritten as
\begin{equation*}
\begin{aligned}
(\varphi^{2p-1}\mathcal{A}_f)'&-\varphi^{2p-1}(m_f-m_H-a+m_H+a)\mathcal{A}_f+\frac{2p-1}{n-1}\varphi^{2p}\mathcal{A}_f\\
&+\frac{4p-2}{n-1}\varphi^{2p-1} m_H\cdot\mathcal{A}_f
\leq (2p-1){\mathrm{Ric}^H_f}_-\cdot\varphi^{2p-2} \mathcal{A}_f.
\end{aligned}
\end{equation*}
Rearrange some terms of the above inequality by $\varphi:=(m_f-m_H-a)_+$ to get
\begin{equation*}
\begin{aligned}
(\varphi^{2p-1}\mathcal{A}_f)'+\left(\frac{2p-1}{n-1}-1\right)&\varphi^{2p}\mathcal{A}_f
+\left(\frac{4p-2}{n-1}-1\right)\varphi^{2p-1}\cdot m_H\mathcal{A}_f\\
&-a\varphi^{2p-1}\mathcal{A}_f
\leq (2p-1){\mathrm{Ric}^H_f}_-\cdot\varphi^{2p-2}\mathcal{A}_f.
\end{aligned}
\end{equation*}
Notice that the term $-a\varphi^{2p-1}\mathcal{A}_f$ of the above inequality is negative.
To deal with this bad term, we multiply the inequality by the integrating factor $e^{-ar}$,
and get that
\begin{equation}
\begin{aligned}\label{imp-inequ}
(\varphi^{2p-1}\mathcal{A}_f e^{-ar})'+\frac{2p-n}{n-1}\varphi^{2p}\mathcal{A}_fe^{-ar}
&+\frac{4p-n-1}{n-1}\varphi^{2p-1} m_H \mathcal{A}_fe^{-ar}\\
&\leq (2p-1){\mathrm{Ric}^H_f}_-\cdot\varphi^{2p-2} \mathcal{A}_fe^{-ar}.
\end{aligned}
\end{equation}
Since $p>n/2$ and the assumption $r\leq\frac{\pi}{2\sqrt{H}}$, we have $m_H\geq 0$ and
\[
\frac{4p-n-1}{n-1}\varphi^{2p-1}m_H\mathcal{A}_fe^{-ar}\geq0.
\]
Then we drop this term and have that
\[
(\varphi^{2p-1} \mathcal{A}_f e^{-ar})'+\frac{2p-n}{n-1}\varphi^{2p}\mathcal{A}_fe^{-ar}
\leq (2p-1){\mathrm{Ric}^H_f}_-\cdot\varphi^{2p-2} \mathcal{A}_fe^{-ar}.
\]
We integrate the above inequality from $0$ to $r$. Since
\[
\varphi(0)=(m-m_H-\partial_r f-a)_+\big|_{r=0}=0,
\]
which comes from the theorem assumption: $a+\partial_r f\geq 0$, then
\begin{equation*}
\begin{aligned}
\varphi^{2p-1} \mathcal{A}_f e^{-ar}+\frac{2p-n}{n-1}\int^r_0&\varphi^{2p}\mathcal{A}_fe^{-at}dt\\
&\leq (2p-1)\int^r_0{\mathrm{Ric}^H_f}_-\cdot\varphi^{2p-2} \mathcal{A}_fe^{-at}dt.
\end{aligned}
\end{equation*}
This implies
\begin{equation}\label{keyinequ2}
\varphi^{2p-1} \mathcal{A}_f e^{-ar}\leq (2p-1)\int^r_0{\mathrm{Ric}^H_f}_-\cdot\varphi^{2p-2} \mathcal{A}_fe^{-at}dt.
\end{equation}
and
\begin{equation}\label{keyinequ3}
\frac{2p-n}{n-1}\int^r_0\varphi^{2p} \mathcal{A}_fe^{-at}dt
\leq (2p-1)\int^r_0{\mathrm{Ric}^H_f}_-\cdot\varphi^{2p-2}\mathcal{A}_fe^{-at}dt.
\end{equation}
By Holder inequality, we also have
\begin{equation}
\begin{aligned}\label{Holder}
\int^r_0&{\mathrm{Ric}^H_f}_-\cdot\varphi^{2p-2} \mathcal{A}_fe^{-at}dt\\
&\leq\bigg[\int^r_0\varphi^{2p} \mathcal{A}_fe^{-at}dt\bigg]^{1-\frac 1p}
\cdot\bigg[\int^r_0({\mathrm{Ric}^H_f}_-)^p \mathcal{A}_fe^{-at}dt\bigg]^{\frac 1p}.
\end{aligned}
\end{equation}
Combining \eqref{Holder} and \eqref{keyinequ3}, we immediately get \eqref{wintegine}. Then
applying \eqref{wintegine} and \eqref{Holder} to \eqref{keyinequ2} yields \eqref{wintegine2}.

\vspace{.1in}

If $H>0$ and $\frac{\pi}{2\sqrt{H}}<r<\frac{\pi}{\sqrt{H}}$, then $m_H<0$ in
\eqref{imp-inequ}. It means that we can not throw away the third term of
\eqref{imp-inequ} as before. To deal with this obstacle, multiplying by
the integrating factor $\sin^{4p-n-1}(\sqrt{H}r)$ in \eqref{imp-inequ}
and integrating from $0$ to $r$, we get
\begin{equation}
\begin{aligned}\label{ineeet}
\sin^{4p-n-1}(\sqrt{H}r)&\,\varphi^{2p-1}\mathcal{A}_f e^{-ar}
+\frac{2p-n}{n-1}\int^r_0\varphi^{2p}\sin^{4p-n-1}(\sqrt{H}t)\mathcal{A}_fe^{-at}dt\\
&\leq(2p-1)\int^r_0{\mathrm{Ric}^H_f}_-\cdot\varphi^{2p-2}\sin^{4p-n-1}(\sqrt{H}t)\mathcal{A}_fe^{-at}dt.
\end{aligned}
\end{equation}
Similar to the above discussion, using the Holder inequality, we have
\begin{equation}
\begin{aligned}\label{Holderex}
&\int^r_0{\mathrm{Ric}^H_f}_-\cdot\varphi^{2p-2}\sin^{4p-n-1}(\sqrt{H}t)\mathcal{A}_fe^{-at}dt\\
&\leq\bigg[\int^r_0\varphi^{2p}\sin^{4p-n-1}(\sqrt{H}t)\mathcal{A}_fe^{-at}dt\bigg]^{1-\frac 1p}
\bigg[\int^r_0\sin^{4p-n-1}(\sqrt{H}t)({\mathrm{Ric}^H_f}_-)^p \mathcal{A}_fe^{-at}dt\bigg]^{\frac 1p}.
\end{aligned}
\end{equation}
Notice that two terms in the left hand side of \eqref{ineeet} are both positive.
Then substituting  \eqref{Holderex} into \eqref{ineeet}, we get
\begin{equation}\label{winteginesingen}
{\Big\|\sin^{\frac{4p-n-1}{2p}}(\sqrt{H}t)\cdot\varphi\Big\|_{2p}}_{\,f,a}(r)
\leq\left[\frac{(n{-}1)(2p{-}1)}{(2p{-}n)}{\Big\|\sin^{\frac{4p-n-1}{p}}(\sqrt{H}t)
\cdot{\mathrm{Ric}^H_f}_-\Big\|_p}_{\,f,a}(r)\right]^{\frac 12},
\end{equation}
which implies \eqref{winteginesin}.
Then putting \eqref{winteginesingen} and \eqref{Holderex} to \eqref{ineeet} immediately
proves \eqref{winteginesin2} by only using an easy fact:
$\sin^{\frac{4p-n-1}{p}}(\sqrt{H}t)\leq 1$.
\end{proof}


\section{Volume comparison estimate I}\label{sec3}
In Section \ref{sec2}, we have proved a weighted mean curvature comparison estimate when
certain weighted integral of Bakry-\'{E}mery Ricci tensor bounds and $\nabla f$ has a
lower bound, and one naturally hopes a corresponding volume comparison estimate under
the same curvature assumptions. In this section, we will give these desired volume
comparison estimates.

For an $n$-dimensional smooth metric measure space $(M^n,g,e^{-f}dv_g)$, let
$\mathcal{A}_f(t,\theta)$ denote the volume element of the weighted volume form
$e^{-f}dv_g=\mathcal{A}_f(t,\theta)dt\wedge d\theta_{n-1}$ in polar coordinate.
That is, $$\mathcal{A}_f(t,\theta)=e^{-f}\mathcal{A}(t,\theta),$$ where
$\mathcal{A}(t,\theta)$ is the standard volume element of the metric $g$. We also
let
\[
A_f(x,r)=\int_{S^{n-1}}\mathcal{A}_f(r,\theta)d\theta_{n-1},
\]
which denotes the weighted volume of the geodesic sphere $S(x,r)=\{y\in M|\,d(x,y)=r\}$, and let
$A_H(r)$ be the volume of the geodesic sphere in the model space $M^n_H$. We modify $M^n_H$ to
the weighted model space
\[
M^n_{H,a}:=(M^n_H, g_H,e^{-h}dv_{g_H}, O),
\]
where $(M^n_H, g_H)$ is the $n$-dimensional simply connected space with constant
sectional curvature $H$, $O\in M^n_H$, and $h(x)=-a\cdot d(x,O)$.
Let $\mathcal{A}^a_H$ be the $h$-volume element in $M^n_{H,a}$. Then
\[
\mathcal{A}^a_H(r)=e^{ar}\mathcal{A}_H(r),
\]
where $\mathcal{A}_H$ is the Riemannian volume element in $M^n_H$. We also have that
\[
A_H(r)=\int_{S^{n-1}}\mathcal{A}_H(r,\theta)d\theta_{n-1};
\]
the corresponding weighted volume of the geodesic sphere in the weighted
model space $M^n_{H,a}$ is defined by
\[
A^a_H(r)=\int_{S^{n-1}}\mathcal{A}^a_H(r,\theta)d\theta_{n-1}.
\]
Hence,
\[
A^a_H(r)=e^{ar}A_H(r).
\]

Moreover, the weighted (or $f$-)volume of the ball
$B(x,r)=\{y\in M| d(x,y)\le r\}$ is defined by
\[
V_f(x,r)=\int^r_0A_f(x,t)dt.
\]
We also let $V^a_H(r)$ be the $h$-volume of the ball $B(O,r)\subset M^n_H$:
\[
V^a_H(r)=\int^r_0A^a_H(t)dt.
\]
Clearly, we have
\[
V_H(r)\leq V^a_H(r)\leq e^{ar}V_H(r).
\]

\

Now we prove a comparison estimate for the area of geodesic spheres using
the pointwise mean curvature estimate in Section \ref{sec2}.
\begin{theorem}\label{areacomp}
Let $(M,g,e^{-f}dv)$ be an $n$-dimensional smooth metric measure space.
Assume that
\[
\partial_rf\geq-a
\]
for some constant $a\geq 0$, along all minimal geodesic segments from $x\in M$.
Let $H\in \mathbb{R}$ and $p>n/2$ be given, and when $H>0$ assume that $R\leq \frac{\pi}{2\sqrt{H}}$.
For $0<r\leq R$, we have
\begin{equation}\label{areacompineq}
\left(\frac{A_f(x,R)}{A^a_H(R)}\right)^{\frac{1}{2p-1}}-\left(\frac{A_f(x,r)}{A^a_H(r)}\right)^{\frac{1}{2p-1}}
\leq C(n,p,H,R)\,\left({\|{\mathrm{Ric}^H_f}_-\|_p}_{\,f,a}(R)\right)^{\frac{p}{2p-1}},
\end{equation}
where $C(n,p,H,R):=\left(\frac{n-1}{(2p-1)(2p-n)}\right)^{\frac{p-1}{2p-1}}
\cdot\int^R_0A_H(t)^{-\frac{1}{2p-1}}dt$.

Moreover, if $H>0$ and $\frac{\pi}{2\sqrt{H}}<r\leq R<\frac{\pi}{\sqrt{H}}$, then we have
\begin{equation}
\begin{aligned}\label{areacompineqv2}
&\left(\frac{A_f(x,R)}{A^a_H(R)}\right)^{\frac{1}{2p-1}}-\left(\frac{A_f(x,r)}{A^a_H(r)}\right)^{\frac{1}{2p-1}}\\
&\leq\left(\frac{n{-}1}{(2p{-}1)(2p{-}n)}\right)^{\frac{p-1}{2p-1}}
\left({\|{\mathrm{Ric}^H_f}_-\|_p}_{\,f,a}(R)\right)^{\frac{p}{2p-1}}
\int^R_r\frac{(\sqrt{H})^{\frac{n-1}{2p-1}}}{\sin^2(\sqrt{H}t)} dt.
\end{aligned}
\end{equation}
\end{theorem}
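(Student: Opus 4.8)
The plan is to integrate the pointwise bound of Theorem~\ref{Mainthm} radially along each geodesic issuing from $x$ and then average over the unit sphere, in the spirit of Petersen--Wei and Aubry. In geodesic polar coordinates at $x$, along a minimal geodesic in a direction $\theta\in S^{n-1}$ one has $(\log\mathcal{A}_f)'=m_f$, while the weighted model element obeys $(\log\mathcal{A}^a_H)'=(\log e^{at}\mathcal{A}_H)'=m_H+a$. Hence, setting $q(t,\theta):=\mathcal{A}_f(t,\theta)/\mathcal{A}^a_H(t)$, one gets $q'=(m_f-m_H-a)\,q\le\varphi\,q$ since $\varphi=(m_f-m_H-a)_+\ge m_f-m_H-a$, and therefore
\[
\bigl(q^{1/(2p-1)}\bigr)'=\tfrac{1}{2p-1}\,q^{1/(2p-1)-1}q'\le\tfrac{\varphi}{2p-1}\,q^{1/(2p-1)}.
\]
This is the inequality I would integrate in $t$ from $r$ to $R$.

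To turn the right-hand side into the curvature quantity, I would feed in \eqref{wintegine2}: dividing it by $\mathcal{A}_f e^{-at}$ bounds $\varphi^{2p-1}$ pointwise, and after cancelling $\mathcal{A}_f$ via the identity $e^{-at}\mathcal{A}^a_H(t)=\mathcal{A}_H(t)$ one finds $\varphi\,q^{1/(2p-1)}\le c(n,p)\,\mathcal{A}_H(t)^{-1/(2p-1)}\bigl(\int_0^t({\mathrm{Ric}^H_f}_-)^p\mathcal{A}_f e^{-as}\,ds\bigr)^{1/(2p-1)}$, with $c(n,p)$ the constant from \eqref{wintegine2}. Integrating from $r$ to $R$, bounding the (monotone) inner integral by its value at $R$, and adopting the convention $\mathcal{A}_f\equiv0$ past the cut point so that the inequality persists beyond the segment domain, I obtain, for a.e.\ $\theta$,
\[
q(R,\theta)^{1/(2p-1)}-q(r,\theta)^{1/(2p-1)}\le\Bigl(\tfrac{n-1}{(2p-1)(2p-n)}\Bigr)^{\frac{p-1}{2p-1}}\Bigl(\int_r^R\mathcal{A}_H(t)^{-1/(2p-1)}\,dt\Bigr)I_\theta^{1/(2p-1)},
\]
where $I_\theta:=\int_0^R({\mathrm{Ric}^H_f}_-)^p\mathcal{A}_f e^{-at}\,dt$. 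When $H>0$ and $\tfrac{\pi}{2\sqrt H}<r\le R<\tfrac{\pi}{\sqrt H}$, the $m_H$-term in \eqref{imp-inequ} has the wrong sign and cannot be discarded; there I would instead use \eqref{winteginesin2}, and since $\sin^{4p-n-1}(\sqrt Ht)\,\mathcal{A}_H(t)=(\sqrt H)^{-(n-1)}(\sin^2(\sqrt Ht))^{2p-1}$, the factor $\mathcal{A}_H(t)^{-1/(2p-1)}$ is simply replaced by $(\sqrt H)^{(n-1)/(2p-1)}/\sin^2(\sqrt Ht)$, which is the integrand in \eqref{areacompineqv2}.

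It remains to average over $\theta$. Since $p>n/2$ forces $\tfrac1{2p-1}<1$, the map $s\mapsto s^{1/(2p-1)}$ is concave, so the per-direction inequality cannot simply be integrated over $S^{n-1}$. Instead I would raise it to the power $2p-1$, write $q(R,\theta)\le\bigl(q(r,\theta)^{1/(2p-1)}+B(\theta)\bigr)^{2p-1}$ with $B(\theta)$ the right-hand side above, integrate over $S^{n-1}$, and apply Minkowski's inequality in $L^{2p-1}(S^{n-1},d\theta_{n-1})$ to peel off the $B$-term. Since $\frac{1}{\abs{S^{n-1}}}\int_{S^{n-1}}q(t,\theta)\,d\theta_{n-1}=A_f(x,t)/A^a_H(t)$ and $\int_{S^{n-1}}I_\theta\,d\theta_{n-1}\le\bigl({\|{\mathrm{Ric}^H_f}_-\|_p}_{\,f,a}(R)\bigr)^p$ by definition of the weighted norm, collecting the $S^{n-1}$-normalization constants (which is exactly what turns $\mathcal{A}_H$ into $A_H$) and using $\int_r^R\le\int_0^R$ yields \eqref{areacompineq}; the integral $\int_0^R A_H(t)^{-1/(2p-1)}\,dt$ converges precisely because $p>n/2$. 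The large-radius estimate \eqref{areacompineqv2} comes out the same way from the second per-direction bound.

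The crux --- and the reason this estimate carries the exponent $\tfrac1{2p-1}$ rather than the $\tfrac1{2p}$ of Theorem~\ref{T1} --- is exactly this averaging step: passing from a pointwise-in-$\theta$ bound to one for the full weighted area of the geodesic sphere through a concave power forces the use of the \emph{pointwise} Aubry-type estimate \eqref{wintegine2} (resp.\ \eqref{winteginesin2}) together with Minkowski's inequality, not the $L^{2p}$ estimate \eqref{wintegine}. The only other point needing care is the cut locus, handled as usual by extending $\mathcal{A}_f$ by zero so that all integrated inequalities survive across it.
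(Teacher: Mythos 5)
Your proof is correct, and it arrives at exactly the constants in \eqref{areacompineq} and \eqref{areacompineqv2}, but it organizes the averaging differently from the paper. The paper averages over $S^{n-1}$ \emph{first}: it passes directly from $\frac{d}{dt}\bigl(\mathcal{A}_f/\mathcal{A}^a_H\bigr)=(m_f-m_H-a)\,\mathcal{A}_f/\mathcal{A}^a_H$ to $\frac{d}{dt}\bigl(A_f/A^a_H\bigr)\le \frac{1}{A^a_H}\int_{S^{n-1}}\varphi\,\mathcal{A}_f\,d\theta$, then applies H\"older \emph{on the sphere} (exponents $2p-1$ and $\frac{2p-1}{2p-2}$) to pull out $\bigl(\int_{S^{n-1}}\varphi^{2p-1}\mathcal{A}_f\,d\theta\bigr)^{1/(2p-1)}A_f^{1-1/(2p-1)}$, bounds the angular $L^{2p-1}$-mass of $\varphi$ via \eqref{wintegine2} integrated in $\theta$, and finally integrates the resulting scalar ODE \eqref{keyinequ} by separation of variables. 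You instead keep the per-geodesic ratio $q=\mathcal{A}_f/\mathcal{A}^a_H$, use \eqref{wintegine2} to bound $\varphi\,q^{1/(2p-1)}$ pointwise along each radial geodesic, integrate radially to get a per-direction inequality, and only then pass to the angular average via Minkowski in $L^{2p-1}(S^{n-1})$. Your observation that the concavity of $s\mapsto s^{1/(2p-1)}$ blocks naive angular averaging, and that Minkowski supplies the fix, is precisely the dual manifestation of the paper's H\"older step — the two approaches amount to swapping the order of the radial integration and the angular averaging, and the $L^{2p-1}$ structure makes both work with identical final constants. Your computation $\sin^{4p-n-1}(\sqrt{H}t)\,\mathcal{A}_H(t)=(\sqrt{H})^{-(n-1)}(\sin^2(\sqrt{H}t))^{2p-1}$ is also exactly how the paper handles the large-radius regime via \eqref{winteginesin2}. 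The cut-locus remark (extend $\mathcal{A}_f$ by zero) is standard and needed in either approach. In short: the proposal is a valid variant; the paper's one-scalar-ODE phrasing is slightly more streamlined, your per-geodesic phrasing makes the role of the pointwise Aubry estimate \eqref{wintegine2} more transparent.
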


\begin{remark}
When ${\mathrm{Ric}^H_f}_-\equiv 0$, that is, $\mathrm{Ric}_f\geq (n-1)H$, we exactly
get Wei-Wylie's comparison result for the area of geodesic spheres (see (4.8) in \cite{[WW]}).
\end{remark}

\begin{proof}[Proof of Theorem \ref{areacomp}]
We apply
\[
\mathcal{A}'_f=m_f \mathcal{A}_f\quad \mathrm{and}\quad
{\mathcal{A}^a_H}'=(m_H+a)\mathcal{A}^a_H
\]
to compute that
\[
\frac{d}{dt}\left(\frac{\mathcal{A}_f(t,\theta)}{\mathcal{A}^a_H(t)}\right)
=(m_f-m_H-a)\frac{\mathcal{A}_f(t,\theta)}{\mathcal{A}^a_H(t)}.
\]
Hence,
\begin{equation*}
\begin{aligned}
\frac{d}{dt}\left(\frac{A_f(x,t)}{A^a_H(t)}\right)
&=\frac{1}{Vol(S^{n-1})}\int_{S^{n-1}}\frac{d}{dt}\left(\frac{\mathcal{A}_f(t,\theta)}{\mathcal{A}^a_H(t)}\right)d\theta_{n-1}\\
&\leq\frac{1}{A^a_H(t)}\int_{S^{n-1}}\varphi\cdot\mathcal{A}_f(t,\theta)d\theta_{n-1}.
\end{aligned}
\end{equation*}
Using Holder's inequality and \eqref{wintegine2}, we have
\begin{equation*}
\begin{aligned}
\int_{S^{n-1}}\varphi\cdot&\mathcal{A}_f(t,\theta)d\theta_{n-1}\\
&\leq\left(\int_{S^{n-1}}\varphi^{2p-1} \mathcal{A}_f(x,t)d\theta_{n-1}\right)^{\frac{1}{2p-1}}
\cdot A_f(x,t)^{1-\frac{1}{2p-1}}\\
&\leq C(n,p)\,e^{\frac{at}{2p-1}} \left({\|{\mathrm{Ric}^H_f}_-\|_p}_{\,f,a}(t)\right)^{\frac{p}{2p-1}}
\cdot A_f(x,t)^{1-\frac{1}{2p-1}},
\end{aligned}
\end{equation*}
where $C(n,p)=\left[(2p-1)^p\left(\frac{n-1}{2p-n}\right)^{p-1}\right]^{\frac{1}{2p-1}}$. Hence, we have
\begin{equation}
\begin{aligned}\label{keyinequ}
\frac{d}{dt}\left(\frac{A_f(x,t)}{A^a_H(t)}\right)\leq C(n,p)&\left(\frac{A_f(x,t)}{A^a_H(t)}\right)^{1-\frac{1}{2p-1}}\\
&\times\left({\|{\mathrm{Ric}^H_f}_-\|_p}_{\,f,a}(t)\right)^{\frac{p}{2p-1}}
\cdot\left(\frac{e^{at}}{A^a_H(t)}\right)^{\frac{1}{2p-1}}.
\end{aligned}
\end{equation}
Separating of variables and integrating from $r$ to $R$, we obtain
\begin{equation*}
\begin{aligned}
&\left(\frac{A_f(x,R)}{A^a_H(R)}\right)^{\frac{1}{2p-1}}-\left(\frac{A_f(x,r)}{A^a_H(r)}\right)^{\frac{1}{2p-1}}\\
&\leq\left[\frac{n-1}{(2p-1)(2p-n)}\right]^{\frac{p-1}{2p-1}}
\left({\|{\mathrm{Ric}^H_f}_-\|_p}_{\,f,a}(R)\right)^{\frac{p}{2p-1}}
\cdot\int^R_r\left(\frac{1}{A_H(t)}\right)^{\frac{1}{2p-1}}dt.
\end{aligned}
\end{equation*}
Since the integral
\[
\int^R_r\left(\frac{1}{A_H(t)}\right)^{\frac{1}{2p-1}}dt\leq\int^R_0\left(\frac{1}{A_H(t)}\right)^{\frac{1}{2p-1}}dt
\]
converges when $p>n/2$, the conclusion \eqref{areacompineq} then follows.

\vspace{.1in}

For the case $H>0$ and $\frac{\pi}{2\sqrt{H}}<r\leq R<\frac{\pi}{\sqrt{H}}$, we have
\[
A^a_H(t)=e^{at}\left(\frac{\sin(\sqrt{H}t)}{\sqrt{H}}\right)^{n-1}.
\]
Then we use this function and \eqref{winteginesin2} instead of \eqref{wintegine2}
to get \eqref{areacompineqv2} by following the above similar argument.
\end{proof}

\vspace{.1in}

Using \eqref{keyinequ}, we can prove Theorem \ref{volcomp}, similar to the argument
of Petersen and Wei \cite{[PeWe]}.
\begin{proof}[Proof of Theorem \ref{volcomp}]
Using
\[
\frac{V_f(x,r)}{V^a_H(r)}=\frac{\int_0^rA_f(x,t)dt}{\int_0^rA^a_H(t)dt},
\]
we compute that
\begin{equation}\label{deri}
\frac{d}{dr}\left(\frac{V_f(x,r)}{V^a_H(r)}\right)
=\frac{A_f(x,r)\int_0^rA^a_H(t)dt-A^a_H(r)\int_0^rA_f(x,t)dt}{(V^a_H(r))^2}.
\end{equation}
On the other hand, integrating \eqref{keyinequ} from $t$ to $r$ ($t\le r$) gives
\begin{equation*}
\begin{aligned}
\frac{A_f(x,r)}{A^a_H(r)}&-\frac{A_f(x,t)}{A^a_H(t)}\\
&\leq C(n,p)\int_t^r
\frac{\left({\|{\mathrm{Ric}^H_f}_-\|_p}_{\,f,a}(s)\right)^{\frac{p}{2p-1}}}{A_H(s)^{\frac{1}{2p-1}}\cdot A^a_H(s)^{1-\frac{1}{2p-1}}}\cdot A_f(x,s)^{1-\frac{1}{2p-1}}ds\\
&\leq C(n,p)\frac{\left({\|{\mathrm{Ric}^H_f}_-\|_p}_{\,f,a}(r)\right)^{\frac{p}{2p-1}}}{A_H(t)^{\frac{1}{2p-1}}\cdot A^a_H(t)^{1-\frac{1}{2p-1}}}\cdot \int_t^rA_f(x,s)^{1-\frac{1}{2p-1}}ds\\
&\leq C(n,p)\frac{\left({\|{\mathrm{Ric}^H_f}_-\|_p}_{\,f,a}(r)\right)^{\frac{p}{2p-1}}}{A_H(t)^{\frac{1}{2p-1}}\cdot A^a_H(t)^{1-\frac{1}{2p-1}}}\cdot (r-t)^{\frac{1}{2p-1}}\,V_f(x,r)^{1-\frac{1}{2p-1}}.
\end{aligned}
\end{equation*}
This implies that
\begin{equation*}
\begin{aligned}
A_f&(x,r)A^a_H(t)-A^a_H(r)A_f(x,t)\\
&\leq C(n,p)\left({\|{\mathrm{Ric}^H_f}_-\|_p}_{\,f,a}(r)\right)^{\frac{p}{2p-1}}\cdot A^a_H(r)\cdot
e^{\frac{ar}{2p-1}}\cdot r^{\frac{1}{2p-1}}\,V_f(x,r)^{1-\frac{1}{2p-1}}.
\end{aligned}
\end{equation*}
Plugging this into \eqref{deri} gives
\begin{equation*}
\begin{aligned}
&\frac{d}{dr}\left(\frac{V_f(x,r)}{V^a_H(r)}\right)\\
&\leq C(n,p)\left({\|{\mathrm{Ric}^H_f}_-\|_p}_{\,f,a}(r)\right)^{\frac{p}{2p-1}}\cdot A^a_H(r)\cdot
e^{\frac{ar}{2p-1}}\cdot r^{\frac{2p}{2p-1}}\cdot \frac{V_f(x,r)^{1-\frac{1}{2p-1}}}{(V^a_H(r))^2}\\
&= C(n,p)\left({\|{\mathrm{Ric}^H_f}_-\|_p}_{\,f,a}(r)\right)^{\frac{p}{2p-1}}\cdot A_H(r)
\left(\frac{r\,e^{ar}}{V^a_H(r)}\right)^{\frac{2p}{2p-1}}\left(\frac{V_f(x,r)}{V^a_H(r)}\right)^{1-\frac{1}{2p-1}}.
\end{aligned}
\end{equation*}
Separating of variables and integrating from $r$ to $R$ ($r\le R$), we immediately get
\begin{equation*}
\begin{aligned}
&\left(\frac{V_f(x,R)}{V^a_H(R)}\right)^{\frac{1}{2p-1}}-\left(\frac{V_f(x,r)}{V^a_H(r)}\right)^{\frac{1}{2p-1}}\\
&\leq\left[\frac{n{-}1}{(2p{-}1)(2p{-}n)}\right]^{\frac{p-1}{2p-1}}\left({\|{\mathrm{Ric}^H_f}_-\|_p}_{\,f,a}(R)\right)^{\frac{p}{2p-1}}
\int^R_rA_H(t)\left(\frac{t\,e^{at}}{V^a_H(t)}\right)^{\frac{2p}{2p-1}}dt.
\end{aligned}
\end{equation*}
Since the integral
\begin{equation*}
\begin{aligned}
\int^R_rA_H(t)\left(\frac{t\,e^{at}}{V^a_H(t)}\right)^{\frac{2p}{2p-1}}dt
&\leq\int^R_rA_H(t)\left(\frac{t\,e^{at}}{V_H(t)}\right)^{\frac{2p}{2p-1}}dt\\
&\leq\int^R_0A_H(t)\left(\frac{t\,e^{at}}{V_H(t)}\right)^{\frac{2p}{2p-1}}dt
\end{aligned}
\end{equation*}
converges when $p>n/2$, the conclusion follows.
\end{proof}

As the classical case, the volume comparison estimate implies the volume
doubling estimate, which is often useful in various geometric inequalities.
\begin{corollary}[Volume doubling estimate]\label{corvde}
Let $(M,g,e^{-f}dv)$ be an $n$-dimensional smooth metric measure space.
Assume that $$\partial_rf\geq-a$$ for some constant $a\geq 0$, along
all minimal geodesic segments from $x\in M$. Given $\alpha>1$, $p>n/2$,
$H\in\mathbb{R}$ and $R>0$(assume $R\leq \frac{\pi}{2\sqrt{H}}$ when $H>0$),
there is an $\epsilon=\epsilon(n,p,aR,|H|R^2,\alpha)$ such that if
$R^2\cdot\bar{k}(p,H,a,R)<\epsilon$, then for all $x\in M$ and
$0<r_1<r_2\leq R$, we have
\[
\frac{V_f(x,r_2)}{V_f(x,r_1)}\leq \alpha\frac{V^a_H(r_2)}{V^a_H(r_1)}.
\]
\end{corollary}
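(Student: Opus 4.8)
The plan is to deduce the doubling estimate from the relative volume comparison of Theorem~\ref{volcomp} by promoting it to a self-improving inequality for the doubling ratio. It is cleanest to rescale the metric so that $R=1$; under this rescaling $a$ is replaced by $aR$, $H$ by $HR^2$ (still at most $\pi^2/4$ when $H>0$), and the normalized curvature quantity $\bar k(p,H,a,R)$ by $R^2\bar k(p,H,a,R)$ — a short computation with the polar volume element, using that $\mathrm{Ric}_f$ viewed as an endomorphism scales like curvature while the factor $V_f$ in the normalization absorbs the scaling of the weighted measure. Thus it suffices to treat $R=1$ with all constants allowed to depend on $n,p,aR,|H|R^2,\alpha$; equivalently one may keep $R$ throughout and observe at the end that $C(n,p,H,a,R)\,V^a_H(R)^{1/(2p-1)}$ equals $R^{2p/(2p-1)}$ times a function of $n,p,aR,HR^2$.

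Fix $x\in M$ and set $g(r):=V_f(x,r)/V^a_H(r)$ for $0<r\le R$. Since $V_f(x,r)/V^a_H(r)\to e^{-f(x)}>0$ as $r\to 0$, the function $g$ extends continuously to $[0,R]$ and is bounded there between two positive constants; hence
\[
Q:=\sup\Bigl\{\,g(r_2)/g(r_1)\ :\ 0<r_1<r_2\le R\,\Bigr\}<\infty .
\]
This finiteness is the only a priori input and is exactly what keeps the bootstrap below from being circular. Now apply Theorem~\ref{volcomp} with basepoint $x$: its proof — which proceeds through \eqref{wintegine2} and \eqref{keyinequ}, both pointwise along geodesics issuing from $x$ — in fact yields the sharper, basepoint-localized bound
\[
g(r_2)^{\frac{1}{2p-1}}-g(r_1)^{\frac{1}{2p-1}}\ \le\ C(n,p,H,a,R)\left(\int_{B(x,R)}({\mathrm{Ric}^H_f}_-)^p\,\mathcal{A}_f\,e^{-at}\,dt\,d\theta_{n-1}\right)^{\frac{1}{2p-1}}
\]
for all $0<r_1<r_2\le R$; this localization is needed because $\sup_y V_f(y,R)$ can be infinite on a noncompact $M$, so one cannot simply invoke the version with the supremum norm ${\|{\mathrm{Ric}^H_f}_-\|^p_p}_{f,a}(R)$.

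By the definition of $\bar k$, the integral on the right is at most $V_f(x,R)\,\bar k(p,H,a,R)^p=g(R)\,V^a_H(R)\,\bar k(p,H,a,R)^p\le Q\,g(r_1)\,V^a_H(R)\,\bar k(p,H,a,R)^p$. Writing $c:=C(n,p,H,a,R)\,V^a_H(R)^{1/(2p-1)}\,\bar k(p,H,a,R)^{p/(2p-1)}$, the displayed inequality becomes $g(r_2)^{1/(2p-1)}\le g(r_1)^{1/(2p-1)}\bigl(1+cQ^{1/(2p-1)}\bigr)$, whence $g(r_2)/g(r_1)\le\bigl(1+cQ^{1/(2p-1)}\bigr)^{2p-1}$ for every admissible pair; taking the supremum over $(r_1,r_2)$ gives $Q^{1/(2p-1)}\le 1+cQ^{1/(2p-1)}$. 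Therefore, once $\bar k(p,H,a,R)$ is small enough that $c<1$, we get $Q\le(1-c)^{-(2p-1)}$, and this is $\le\alpha$ precisely when $c\le 1-\alpha^{-1/(2p-1)}$, i.e. when
\[
\bar k(p,H,a,R)\ \le\ \left(\frac{1-\alpha^{-1/(2p-1)}}{C(n,p,H,a,R)\,V^a_H(R)^{1/(2p-1)}}\right)^{\frac{2p-1}{p}}.
\]
By the scaling remark this threshold has the form $R^2\bar k(p,H,a,R)<\epsilon$ with $\epsilon=\epsilon(n,p,aR,|H|R^2,\alpha)$ (independent of $x$), and $Q\le\alpha$ is exactly the claimed inequality $V_f(x,r_2)/V_f(x,r_1)\le\alpha\,V^a_H(r_2)/V^a_H(r_1)$ for all $0<r_1<r_2\le R$.

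The step I expect to be the main obstacle is precisely this passage to a self-improving inequality: the natural estimate of the curvature term unavoidably produces $V_f(x,R)$ on the right-hand side, which is not controlled a priori by $V_f(x,r_1)$; the remedy is to substitute $V_f(x,R)=g(R)V^a_H(R)\le Q\,g(r_1)V^a_H(R)$ and use the a priori finiteness of $Q$, so that smallness of $\bar k$ forces $Q$ itself close to $1$. The remaining points — the continuity and positivity of $g$ at $0$, the observation that Theorem~\ref{volcomp} holds in its localized form, and the bookkeeping of constants under rescaling — are routine.
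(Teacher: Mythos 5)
Your proof is correct, and it implements the same self-improving idea as the paper but packages it differently, in a way that is arguably cleaner. The paper's proof applies Theorem~\ref{volcomp} to the pair $(r_1,r_2)$, obtaining a bound of the form $\bigl(V_f(x,r_1)/V_f(x,r_2)\bigr)^{1/(2p-1)}\ge \bigl(V^a_H(r_1)/V^a_H(r_2)\bigr)^{1/(2p-1)}\bigl(1-\sigma(r_2)\bigr)$, and then controls the unknown error $\sigma(r_2)$ by comparing it with $\sigma(R)$: since $C(n,p,H,a,\cdot)$ is increasing, $\sigma(r_2)\le\sigma(R)\bigl(V_f(x,R)/V_f(x,r_2)\bigr)^{1/(2p-1)}\bigl(V^a_H(R)/V^a_H(r_2)\bigr)^{-1/(2p-1)}$, and then it invokes Theorem~\ref{volcomp} a second time (at the pair $(r_2,R)$) to bound the intervening ratio by a crude factor of $3$ once $R^2\bar{k}$ is small, giving $\sigma(r_2)\le 3\sigma(R)$. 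You instead formalize the a priori finiteness of the worst-case ratio $Q=\sup g(r_2)/g(r_1)$ up front and derive the linear self-improving inequality $Q^{1/(2p-1)}\le 1+cQ^{1/(2p-1)}$ in one pass, then solve for $Q$. This avoids the ad hoc constant $3$ and yields a fully explicit threshold for $\epsilon$. Your observations about the need for the basepoint-localized form of Theorem~\ref{volcomp} (since $\sup_x V_f(x,R)$ can be infinite) and about the justification of the scale-invariance of $R^2\bar{k}$ and of $C(n,p,H,a,R)\,V^a_H(R)^{1/(2p-1)}\cdot R^{-2p/(2p-1)}$ are correct and fill in details the paper leaves implicit. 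One small point to be scrupulous about when writing it up: in bounding $\int_{B(x,R)}(\mathrm{Ric}^H_{f-})^p\,\mathcal{A}_fe^{-at}\le V_f(x,R)\,\bar{k}(p,H,a,R)^p$ you use $\bar{k}$ (which carries a supremum over basepoints), and then you replace $g(R)$ by $Qg(r_1)$; this needs $r_1<R$, which indeed always holds since $r_1<r_2\le R$.
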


\begin{remark}
We remark that $R^2\cdot\bar{k}(p,H,a,R)$ is the scale invariant curvature quantity.
Hence one can simply scale the metric so that one only need to work under the
assumption that $\bar{k}(p,H,a,1)$ is small.
\end{remark}

\begin{proof}[Proof of Corollary \ref{corvde}]
By Theorem \ref{volcomp}, we get
\begin{equation}\label{volcdineq}
\left(\frac{V_f(x,r_1)}{V_f(x,r_2)}\right)^{\frac{1}{2p-1}}\geq
\left(\frac{V^a_H(r_1)}{V^a_H(r_2)}\right)^{\frac{1}{2p-1}}(1-\sigma),
\end{equation}
where $\sigma:=C(n,p,H,a,r_2)\,{V^a_H(r_2)}^{\frac{1}{2p-1}}\cdot\bar{k}^{\frac{p}{2p-1}}(p,H,a,r_2)$.
Now we will estimate the quantity $(1-\sigma)$. We claim
that $\sigma(r)$ has some monotonicity in $r$ (though it is not really monotonic).
Indeed, since $C(n,p,H,a,r)$ is increasing in $r$, that is,
\[
\sigma(r_2){V^a_H(r_2)}^{-\frac{1}{2p-1}}\cdot\bar{k}^{-\frac{p}{2p-1}}(p,H,a,r_2)
\leq\sigma(R){V^a_H(R)}^{-\frac{1}{2p-1}}\cdot\bar{k}^{-\frac{p}{2p-1}}(p,H,a,R).
\]
By the definition of $\bar{k}$, the above inequality implies
\[
\sigma(r_2){V^a_H(r_2)}^{-\frac{1}{2p-1}}\cdot{V_f(x,r_2)}^{\frac{1}{2p-1}}
\leq\sigma(R){V^a_H(R)}^{-\frac{1}{2p-1}}\cdot{V_f(x,R)}^{\frac{1}{2p-1}}.
\]
Namely,
\begin{equation}\label{vovvpd}
\sigma(r_2)\leq\sigma(R)\left(\frac{V_f(x,R)}{V_f(x,r_2)}\right)^{\frac{1}{2p-1}}
\cdot\left(\frac{V^a_H(R)}{V^a_H(r_2)}\right)^{-\frac{1}{2p-1}}.
\end{equation}

On the other hand, by Theorem \ref{volcomp} again, we have
\[
\left(\frac{V_f(x,r_2)}{V^a_H(r_2)}\right)^{\frac{1}{2p-1}}\geq\left(\frac{V_f(x,R)}{V^a_H(R)}\right)^{\frac{1}{2p-1}}
\bigg[1-C(n,p,H,a,R){V^a_H(R)}^{\frac{1}{2p-1}}\cdot\bar{k}(R)^{\frac{p}{2p-1}}\bigg],
\]
where $\bar{k}(R)=\bar{k}(p,H,a,R)$ and
\[
C(n,p,H,a,R):=\left(\frac{n-1}{(2p-1)(2p-n)}\right)^{\frac{p-1}{2p-1}}
\int^R_0A_H(t)\left(\frac{t\,e^{at}}{V^a_H(t)}\right)^{\frac{2p}{2p-1}}dt.
\]
We also have
\[
C(n,p,H,a,R){V^a_H(R)}^{\frac{1}{2p-1}}\leq (e^{aR})^{\frac{2p+1}{2p-1}}R^{\frac{2p}{2p-1}}\,C(n,p,|H|R).
\]
Hence
\[
\left(\frac{V_f(x,r_2)}{V_f(x,R)}\right)^{\frac{1}{2p-1}}\geq\left(\frac{V^a_H(r_2)}{V^a_H(R)}\right)^{\frac{1}{2p-1}}
\bigg[1-C(n,p,|H|R)\cdot(e^{aR})^{\frac{2p+1}{2p-1}}\bigg(R^2\bar{k}(R)\bigg)^{\frac{p}{2p-1}}\bigg].
\]
When $R^2\bar{k}(R)\leq \epsilon$ is small enough, which depends only on $n$, $p$, $aR$ and $|H|R$,
the above inequality becomes
\[
\left(\frac{V_f(x,r_2)}{V_f(x,R)}\right)^{\frac{1}{2p-1}}\geq \frac 13\,\left(\frac{V^a_H(r_2)}{V^a_H(R)}\right)^{\frac{1}{2p-1}}.
\]
Substituting this into \eqref{vovvpd} yields
\[
\sigma(r_2)\leq 3\sigma(R).
\]
Combining this with \eqref{volcdineq} and letting $\sigma(R)$ arbitrary small
(as long as $R^2\bar{k}(R)\leq \epsilon$ is small enough), the result follows.
\end{proof}

\vspace{.1in}

In the rest of this section, we will study the relative volume comparison estimate
for annular regions and prove Theorem \ref{vocoannu} in the introduction. The proof
idea seems to be easy, by using the twice procedures of proving Theorem \ref{volcomp}.
\begin{proof}[Proof of Theorem \ref{vocoannu}]
On one hand, using
\[
\frac{V_f(x,r,R)}{V^a_H(r,R)}=\frac{\int_r^RA_f(x,t)dt}{\int_r^RA^a_H(t)dt},
\]
we have
\begin{equation}\label{derivat1}
\frac{d}{dR}\left(\frac{V_f(x,r,R)}{V^a_H(r,R)}\right)
=\frac{A_f(x,R)\int_r^RA^a_H(t)dt-A^a_H(R)\int_r^RA_f(x,t)dt}{(V^a_H(r,R))^2}.
\end{equation}
Integrating \eqref{keyinequ} from $t$ to $R$ ($t\leq R$) as before yields
\begin{equation*}
\begin{aligned}
&\frac{A_f(x,R)}{A^a_H(R)}-\frac{A_f(x,t)}{A^a_H(t)}\\
&\leq C(n,p)\frac{\left({\|{\mathrm{Ric}^H_f}_-\|_p}_{\,f,a}(R)\right)^{\frac{p}{2p-1}}}{A_H(t)^{\frac{1}{2p-1}}\cdot A^a_H(t)^{1-\frac{1}{2p-1}}}\cdot \int_t^RA_f(x,s)^{1-\frac{1}{2p-1}}ds\\
&\leq C(n,p)\frac{\left({\|{\mathrm{Ric}^H_f}_-\|_p}_{\,f,a}(R)\right)^{\frac{p}{2p-1}}}{A_H(t)^{\frac{1}{2p-1}}\cdot A^a_H(t)^{1-\frac{1}{2p-1}}}\cdot (R-t)^{\frac{1}{2p-1}}\,(V_f(x,t,R))^{1-\frac{1}{2p-1}},
\end{aligned}
\end{equation*}
which gives that
\begin{equation}
\begin{aligned}\label{differineq}
&A_f(x,R)A^a_H(t)-A^a_H(R)A_f(x,t)\\
&\leq C(n,p)\left({\|{\mathrm{Ric}^H_f}_-\|_p}_{\,f,a}(R)\right)^{\frac{p}{2p-1}}A^a_H(R)
e^{\frac{at}{2p-1}} R^{\frac{1}{2p-1}}\,(V_f(x,t,R))^{1-\frac{1}{2p-1}}.
\end{aligned}
\end{equation}
Substituting this into \eqref{derivat1},
\begin{equation*}
\begin{aligned}
\frac{d}{dR}\left(\frac{V_f(x,r,R)}{V^a_H(r,R)}\right)&\leq C(n,p)\left({\|{\mathrm{Ric}^H_f}_-\|_p}_{\,f,a}(R)\right)^{\frac{p}{2p-1}}\cdot A_H(R)\\
&\quad\times\left(\frac{R\,e^{aR}}{V^a_H(r,R)}\right)^{\frac{2p}{2p-1}}
\left(\frac{V_f(x,r,R)}{V^a_H(r,R)}\right)^{1-\frac{1}{2p-1}},
\end{aligned}
\end{equation*}
where we used the fact: $\int^R_rV_f(x,t,R)dt\le V_f(x,r,R)$. Separating of variables,
integrating with respect to the variable $R$ from $R_1$ to $R_2$ ($R_1\le R_2$), and
changing the variable $r$ to $r_2$ ($r_2\le R_1$), we get
\begin{equation*}
\begin{aligned}
&\left(\frac{V_f(x,r_2,R_2)}{V^a_H(r_2,R_2)}\right)^{\frac{1}{2p-1}}-\left(\frac{V_f(x,r_2,R_1)}{V^a_H(r_2,R_1)}\right)^{\frac{1}{2p-1}}\\
&\leq\left[\frac{n{-}1}{(2p{-}1)(2p{-}n)}\right]^{\frac{p-1}{2p-1}}\left({\|{\mathrm{Ric}^H_f}_-\|_p}_{\,f,a}(R_2)\right)^{\frac{p}{2p-1}}
\int^{R_2}_{R_1}A_H(t)\left(\frac{t\,e^{at}}{V^a_H(r_2,t)}\right)^{\frac{2p}{2p-1}}dt.
\end{aligned}
\end{equation*}

\vspace{.1in}

On the other hand, similar to the above argument, we also have
\begin{equation}\label{derivat2}
\frac{d}{dr}\left(\frac{V_f(x,r,R)}{V^a_H(r,R)}\right)
=\frac{A^a_H(r)\int_r^RA_f(x,t)dt-A_f(x,r)\int_r^RA^a_H(t)dt}{(V^a_H(r,R))^2}.
\end{equation}
By \eqref{differineq}, we also get that
\begin{equation*}
\begin{aligned}
&A_f(x,t)A^a_H(r)-A^a_H(t)A_f(x,r)\\
&\leq C(n,p)\left({\|{\mathrm{Ric}^H_f}_-\|_p}_{\,f,a}(t)\right)^{\frac{p}{2p-1}}A^a_H(t)\cdot
e^{\frac{ar}{2p-1}}\cdot t^{\frac{1}{2p-1}}\,(V_f(x,r,t))^{1-\frac{1}{2p-1}}
\end{aligned}
\end{equation*}
for $r\le t$. Substituting this into \eqref{derivat2}, and letting $R=R_1$, we have
\begin{equation*}
\begin{aligned}
\frac{d}{dr}\left(\frac{V_f(x,r,R_1)}{V^a_H(r,R_1)}\right)&\leq C(n,p)\left({\|{\mathrm{Ric}^H_f}_-\|_p}_{\,f,a}(R_1)\right)^{\frac{p}{2p-1}}\cdot A_H(R_1)\\
&\quad\times\left(\frac{R_1\,e^{aR_1}}{V^a_H(r,R_1)}\right)^{\frac{2p}{2p-1}}\left(\frac{V_f(x,r,R_1)}{V^a_H(r,R_1)}\right)^{1-\frac{1}{2p-1}}.
\end{aligned}
\end{equation*}
Separating of variables and integrating from $r_1$ to $r_2$ ($r_1\le r_2$) with respect to the variable $r$,
we immediately get
\begin{equation*}
\begin{aligned}
&\left(\frac{V_f(x,r_2,R_1)}{V^a_H(r_2,R_1)}\right)^{\frac{1}{2p-1}}-\left(\frac{V_f(x,r_1,R_1)}{V^a_H(r_1,R_1)}\right)^{\frac{1}{2p-1}}\\
&\leq\left[\frac{n{-}1}{(2p{-}1)(2p{-}n)}\right]^{\frac{p-1}{2p-1}}\left({\|{\mathrm{Ric}^H_f}_-\|_p}_{\,f,a}(R_1)\right)^{\frac{p}{2p-1}}
A_H(R_1)\int^{r_2}_{r_1}\left(\frac{R_1\,e^{aR_1}}{V^a_H(t,R_1)}\right)^{\frac{2p}{2p-1}}dt.
\end{aligned}
\end{equation*}
Combining the above two aspects,
\begin{equation*}
\begin{aligned}
&\left(\frac{V_f(x,r_2,R_2)}{V^a_H(r_2,R_2)}\right)^{\frac{1}{2p-1}}-\left(\frac{V_f(x,r_1,R_1)}{V^a_H(r_1,R_1)}\right)^{\frac{1}{2p-1}}\\
&\leq\left(\frac{n-1}{(2p-1)(2p-n)}\right)^{\frac{p-1}{2p-1}}\left({\|{\mathrm{Ric}^H_f}_-\|_p}_{\,f,a}(R_2)\right)^{\frac{p}{2p-1}}\\
&\quad\times\left[\int^{R_2}_{R_1}A_H(t)\left(\frac{t\,e^{at}}{V^a_H(r_2,t)}\right)^{\frac{2p}{2p-1}}dt+
\int^{r_2}_{r_1}A_H(R_1)\left(\frac{R_1\,e^{aR_1}}{V^a_H(t,R_1)}\right)^{\frac{2p}{2p-1}}dt\right]
\end{aligned}
\end{equation*}
for $0\leq r_1\leq r_2\leq R_1\leq R_2$. Hence the result follows.
\end{proof}

In particular, if $f$ is constant and $a=0$, we get volume comparison estimates for
the annuluses on Riemannian manifolds with integral bounds for the Ricci curvature.
\begin{corollary}\label{corvocoannu}
Let $(M,g)$ be an $n$-dimensional complete Riemannian manifold. Let
$H\in\mathbb{R}$ and $p>n/2$. For  $0\leq r_1\leq r_2\leq R_1\leq R_2$ (assume
$R_2\leq\frac{\pi}{2\sqrt{H}}$ when $H>0$), we have
\begin{equation*}
\begin{aligned}
&\left(\frac{V(x,r_2,R_2)}{V_H(r_2,R_2)}\right)^{\frac{1}{2p-1}}-\left(\frac{V(x,r_1,R_1)}{V_H(r_1,R_1)}\right)^{\frac{1}{2p-1}}\\
&\leq\left(\frac{n-1}{(2p-1)(2p-n)}\right)^{\frac{p-1}{2p-1}}\left(\|\mathrm{Ric}^H_-\|_p\,(R_2)\right)^{\frac{p}{2p-1}}\\
&\quad\times\left[\int^{R_2}_{R_1}A_H(t)\left(\frac{t}{V_H(r_2,t)}\right)^{\frac{2p}{2p-1}}dt+
\int^{r_2}_{r_1}A_H(R_1)\left(\frac{R_1}{V_H(t,R_1)}\right)^{\frac{2p}{2p-1}}dt\right].
\end{aligned}
\end{equation*}
\end{corollary}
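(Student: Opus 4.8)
The plan is simply to read off Corollary \ref{corvocoannu} as the special case of Theorem \ref{vocoannu} in which $f$ is a constant and $a=0$, and then to check that all the weighted normalizations collapse to the stated unweighted ones. First, the hypothesis of Theorem \ref{vocoannu} is trivially met: $\partial_r f\equiv 0\ge 0=-a$ along every minimal geodesic. Since $\mathrm{Hess}\,f\equiv 0$ for constant $f$, we have $\mathrm{Ric}_f=\mathrm{Ric}$, hence ${\mathrm{Ric}^H_f}_-={\mathrm{Ric}^H_-}$ pointwise.

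Next I would track the normalizations. With $a=0$ the weighted model space $M^n_{H,0}$ carries the weight $h\equiv 0$, so $\mathcal{A}^a_H=\mathcal{A}_H$, $A^a_H=A_H$ and $V^a_H(r,R)=V_H(r,R)$, and the extra factors $e^{at}$, $e^{aR_1}$ in the constant $\mathcal{C}$ of Theorem \ref{vocoannu} become $1$; thus $\mathcal{C}$ reduces to exactly the prefactor $\left(\frac{n-1}{(2p-1)(2p-n)}\right)^{\frac{p-1}{2p-1}}$ times the bracketed sum of the two integrals $\int_{r_1}^{r_2}A_H(R_1)\bigl(\frac{R_1}{V_H(t,R_1)}\bigr)^{\frac{2p}{2p-1}}dt+\int_{R_1}^{R_2}A_H(t)\bigl(\frac{t}{V_H(r_2,t)}\bigr)^{\frac{2p}{2p-1}}dt$ appearing in the statement. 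On the manifold side, for constant $f$ one has $\mathcal{A}_f=e^{-f}\mathcal{A}$, so $A_f(x,r)=e^{-f}A(x,r)$ and $V_f(x,r,R)=e^{-f}V(x,r,R)$, while ${\|{\mathrm{Ric}^H_f}_-\|^p_p}_{\,f,0}(R_2)=e^{-f}\|\mathrm{Ric}^H_-\|^p_p(R_2)$ by definition of the weighted $L^p$ norm with $e^{-at}\equiv 1$.

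Finally I would substitute these identities into the conclusion of Theorem \ref{vocoannu}. On the left-hand side the ratios $V_f(x,r_i,R_i)/V^a_H(r_i,R_i)=e^{-f}\,V(x,r_i,R_i)/V_H(r_i,R_i)$ produce a common factor $e^{-f/(2p-1)}$ after taking the $\frac{1}{2p-1}$-th power; on the right-hand side $\bigl({\|{\mathrm{Ric}^H_f}_-\|^p_p}_{\,f,0}(R_2)\bigr)^{\frac{1}{2p-1}}=e^{-f/(2p-1)}\bigl(\|\mathrm{Ric}^H_-\|_p(R_2)\bigr)^{\frac{p}{2p-1}}$ produces the same factor $e^{-f/(2p-1)}$. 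These two factors cancel, and what remains is precisely the asserted inequality. There is essentially no obstacle here beyond this bookkeeping; the only point that requires a moment's care is verifying that the constant $e^{-f/(2p-1)}$ indeed occurs with the same exponent on both sides (which it does, because both $V_f$ and the weighted curvature norm scale by $e^{-f}$), and in the cleanest case $f\equiv 0$ even this is vacuous.
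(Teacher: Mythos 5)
Your specialization is correct and matches the paper's own treatment: the paper derives the corollary from Theorem \ref{vocoannu} by simply setting $f$ constant and $a=0$, with no further argument. Your careful verification that the common factor $e^{-f/(2p-1)}$ appears on both sides with the same exponent and cancels is exactly the bookkeeping that makes the specialization legitimate.
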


\begin{remark}
\item[(1)]
If $r_1=r_2=0$, we immediately get the Petersen-Wei's relative Bishop-Gromov
volume comparison estimate in the integral sense \cite{[PeWe]}.
\item[(2)]
If $\mathrm{Ric}^H_- \equiv0$, i.e. $\mathrm{Ric}\ge(n-1)H$,
then we have a special case of the relative volume comparison estimate
for annuluses on manifolds (see \cite{[Zhu]}).
\end{remark}


\section{Mean curvature and volume comparison estimate II}\label{sec4}
In this section, we shall prove a very general mean curvature
comparison estimate on smooth metric measure spaces $(M,g,e^{-f}dv)$
when only the integral Bakry-\'{E}mery Ricci tensor bounds (without
any assumption on $f$), which might be useful in other applications.

\vspace{.1in}

In this case, we consider the following error form
\[
\psi:=(m_f-m_H)_+.
\]
Using this, we have
\begin{theorem}[Mean curvature comparison estimate II]\label{Mainthm2}
Let $(M,g,e^{-f}dv)$ be an $n$-dimensional smooth metric measure space.
Let $H\in\mathbb{R}$, and  $r\leq\frac{\pi}{2\sqrt{H}}$ when $H>0$.
For any $p>\frac n2$ when $n\geq 3$ ($p>\frac 54$ when $n=2$), we have
\begin{equation}\label{meanccom1}
\bigg(\int^r_0\sn_H^2(t)e^{\frac{4p-2}{n-1}f(t)}\psi(t)^{2p}\mathcal{A}_fdt\bigg)^{\frac 1p}
\leq\frac{2p-1}{2p-n} \Big(\mathcal{M}(r)+\mathcal{N}(r)\Big)
\end{equation}
and
\begin{equation}\label{meanccom2}
\sn_H^2(r)\psi(r)^{2p-1}e^{\frac{4p-2}{n-1}f(r)}\mathcal{A}_f
\leq\frac{(2p-1)^p}{(n-1)(2p-n)^{p-1}}\Big(\mathcal{M}(r)+\mathcal{N}(r)\Big)^p
\end{equation}
along any minimal geodesic segment from $x$, where
$$\mathcal{M}(r):=\bigg(\int^r_0\sn_H^2(t)e^{\frac{4p-2}{n-1}f(t)}m_H^{2p}\mathcal{A}_fdt\bigg)^{\frac 1p}$$
and
$$\mathcal{N}(r):=(n-1)\bigg(\int^r_0\sn_H^2(t)e^{\frac{4p-2}{n-1}f(t)}({\mathrm{Ric}^H_f}_-)^p\mathcal{A}_fdt\bigg)^{\frac 1p},$$
and where $m_H(r)=(n-1)\frac{\sn_H'(r)}{\sn_H(r)},$ and $\sn_H(r)$ is the unique function satisfying
\[
\sn_H''(r)+H\sn_H(r)=0,\quad\sn_H(0)=0,\quad\sn_H'(0)=1.
\]

\vspace{.1in}

Moreover, if $H>0$ and $\frac{\pi}{2\sqrt{H}}<r<\frac{\pi}{\sqrt{H}}$, then we have
\begin{equation}\label{meanccomposi}
\bigg(\int^r_0\sin^{4p-n-1}(\sqrt{H}t)\,e^{\frac{4p-2}{n-1}f(t)}\psi^{2p}\mathcal{A}_fdt\bigg)^{\frac 1p}\leq\frac{2p{-}1}{2p{-}n}\Big(\mathcal{\widetilde{M}}(r)+\mathcal{\widetilde{N}}(r)\Big)
\end{equation}
and
\begin{equation}\label{winteginesin2cc}
\sin^{4p-n-1}(\sqrt{H}r)\,e^{\frac{4p-2}{n-1}f(r)}\psi^{2p-1}\mathcal{A}_f
\leq(2p{-}1)^p\left(\frac{n{-}1}{2p{-}n}\right)^{p-1}\Big(\mathcal{\widetilde{M}}(r)+\mathcal{\widetilde{N}}(r)\Big)^p
\end{equation}
along that minimal geodesic segment from $x$, where
$$\mathcal{\widetilde{M}}(r):=\bigg(\int^r_0\sin^{4p-n-1}(\sqrt{H}t)\,e^{\frac{4p-2}{n-1}f(t)}m_H^{2p}\mathcal{A}_fdt\bigg)^{\frac 1p}$$
and
$$\mathcal{\widetilde{N}}(r):=(n-1)\bigg(\int^r_0\sin^{4p-n-1}(\sqrt{H}t)\,e^{\frac{4p-2}{n-1}f(t)}({\mathrm{Ric}^H_f}_-)^p\mathcal{A}_fdt\bigg)^{\frac 1p}.$$
\end{theorem}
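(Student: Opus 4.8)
The plan is to follow exactly the same scheme as in the proof of Theorem \ref{Mainthm}, but now keeping the full quadratic term $(m_f+\partial_r f)^2$ rather than bounding $\partial_r f$ away, and compensating for the loss of control on $\partial_r f$ by using the inequality $(m_f+\partial_r f)^2 \ge -m_H^2 + \tfrac{1}{2}m_f^2 - m_H^2$ type manipulations—more precisely, by using $-\tfrac{(m_f+\partial_rf)^2 - m_H^2}{n-1} \le -\tfrac{m_f^2}{2(n-1)} + \tfrac{(\partial_r f)^2}{n-1} + \tfrac{m_H^2}{n-1}$ (or a similar Cauchy–Schwarz split), but this introduces $(\partial_r f)^2$, which we do not want either. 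The cleaner route, and the one the statement's weight $e^{\frac{4p-2}{n-1}f}$ suggests, is to absorb the $\partial_r f$ terms into the volume element: recall $\mathcal{A}_f = e^{-f}\mathcal{A}$ and $\mathcal{A}_f' = m_f \mathcal{A}_f$. So first I would derive, from the Bochner/Riccati inequality $m_f' \le -\tfrac{m^2}{n-1} - \mathrm{Ric}_f(\partial_r,\partial_r)$ with $m = m_f + \partial_r f$, the differential inequality
\begin{equation*}
\psi' + \frac{\psi^2}{n-1} + \frac{2m_H \psi}{n-1} + \frac{2(\partial_r f)\psi}{n-1} \le {\mathrm{Ric}^H_f}_- + \frac{(\partial_r f)^2}{n-1},
\end{equation*}
valid on $\{m_f > m_H\}$, where $\psi := (m_f - m_H)_+$; the troublesome terms are now $\tfrac{2(\partial_r f)\psi}{n-1}$ on the left and $\tfrac{(\partial_r f)^2}{n-1}$ on the right.

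\textbf{Key steps.} Multiply through by $(2p-1)\psi^{2p-2}\mathcal{A}_f$ and, as before, recognize $(2p-1)\psi^{2p-2}\psi'\mathcal{A}_f = (\psi^{2p-1}\mathcal{A}_f)' - \psi^{2p-1}m_f\mathcal{A}_f = (\psi^{2p-1}\mathcal{A}_f)' - \psi^{2p-1}(\psi + m_H + \partial_r f)\mathcal{A}_f$. The $\partial_r f$ contributions combine: the $-\psi^{2p-1}(\partial_r f)\mathcal{A}_f$ coming from $\mathcal{A}_f'$ and the $+\tfrac{2(2p-1)}{n-1}\psi^{2p-1}(\partial_r f)\mathcal{A}_f$ from the drift term, together with $\tfrac{2p-1}{n-1}\psi^{2p-2}(\partial_r f)^2\mathcal{A}_f$ on the right, and I claim these assemble (after completing the square in $\partial_r f$, or after the substitution $w := \psi^{2p-1}\mathcal{A}_f e^{\frac{4p-2}{n-1}f}$) so that the net effect is exactly the integrating factor $e^{\frac{4p-2}{n-1}f}$: indeed $\tfrac{d}{dr}e^{\frac{4p-2}{n-1}f} = \tfrac{4p-2}{n-1}(\partial_r f)e^{\frac{4p-2}{n-1}f}$ produces the coefficient that kills the linear-in-$\partial_r f$ term, and the quadratic-in-$\partial_r f$ terms then have a favorable sign (or cancel) after grouping $\tfrac{2p-1}{n-1}\psi^{2p} - $ the relevant piece. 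Next, multiply additionally by $\sn_H^2(t)$ (the factor that handles $m_H$: note $(\sn_H^2)' = 2\sn_H\sn_H' = \tfrac{2}{n-1}\sn_H^2 m_H$, so the extra $m_H$-drift term is absorbed by $\sn_H^2$, which is the analogue of the $\sin^{4p-n-1}$ trick from Theorem \ref{Mainthm} but now needed even for $r \le \tfrac{\pi}{2\sqrt H}$ because we no longer get $m_H \ge 0$ working purely in our favor once $\partial_r f$ is uncontrolled—actually $m_H \ge 0$ there, but the $\psi^{2p-1}m_H$ term now has the wrong coefficient after the regrouping, hence we carry $\sn_H^2$ throughout). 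After these two integrating factors, integrate from $0$ to $r$, using $\psi(0) = 0$ and $\sn_H(0)=0$ to kill boundary terms, obtaining
\begin{equation*}
\sn_H^2(r)\psi(r)^{2p-1}e^{\frac{4p-2}{n-1}f(r)}\mathcal{A}_f + \frac{2p-n}{n-1}\int_0^r \sn_H^2 e^{\frac{4p-2}{n-1}f}\psi^{2p}\mathcal{A}_f\,dt \le (2p-1)\int_0^r \sn_H^2 e^{\frac{4p-2}{n-1}f}\Big({\mathrm{Ric}^H_f}_- + \tfrac{m_H^2}{(n-1)^2}\Big)\psi^{2p-2}\mathcal{A}_f\,dt,
\end{equation*}
where the $m_H^2$ term on the right is the residue of completing the square on the $m_H\psi$ drift (this is why $\mathcal{M}$ appears). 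Finally apply Hölder with exponents $\tfrac{p}{p-1}$ and $p$ to the right-hand integral, splitting off $\big(\int \sn_H^2 e^{\frac{4p-2}{n-1}f}\psi^{2p}\mathcal{A}_f\big)^{1-1/p}$ and $\big(\int \sn_H^2 e^{\frac{4p-2}{n-1}f}(\cdots)^p\mathcal{A}_f\big)^{1/p}$; use $(A+B)^p \le 2^{p-1}(A^p + B^p)$ or Minkowski on the latter to separate $\mathcal{N}$ from $\mathcal{M}$; and absorb the $\psi^{2p}$ factor to the left, which yields \eqref{meanccom1}. Feeding \eqref{meanccom1} back into the pointwise inequality gives \eqref{meanccom2}. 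For the range $\tfrac{\pi}{2\sqrt H} < r < \tfrac{\pi}{\sqrt H}$, since $\sn_H(t) = \tfrac{\sin(\sqrt H t)}{\sqrt H}$, one has $\sn_H^2(t) = \tfrac{\sin^2(\sqrt H t)}{H}$, which no longer dominates the needed power, so replace the factor $\sn_H^2$ by $\sin^{4p-n-1}(\sqrt H t)$ exactly as in the proof of Theorem \ref{Mainthm}, and repeat verbatim to get \eqref{meanccomposi} and \eqref{winteginesin2cc}, using $\sin^{(4p-n-1)/p} \le 1$ at the last step.

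\textbf{Main obstacle.} The delicate point is the bookkeeping that shows the combination of the $\partial_r f$-linear drift term $\tfrac{2(2p-1)}{n-1}(\partial_r f)\psi^{2p-1}\mathcal{A}_f$, the $-\psi^{2p-1}(\partial_r f)\mathcal{A}_f$ from $\mathcal{A}_f'$, and the quadratic term $\tfrac{2p-1}{n-1}(\partial_r f)^2\psi^{2p-2}\mathcal{A}_f$ is precisely neutralized by the single integrating factor $e^{\frac{4p-2}{n-1}f}$ together with a completion of square that does not spoil the sign of the surviving $\tfrac{2p-n}{n-1}\psi^{2p}$ term—in particular checking that the coefficient $\tfrac{4p-2}{n-1}$ in the exponent is forced, and verifying that the leftover quadratic-in-$m_H$ piece is exactly $\tfrac{1}{(n-1)^2}m_H^2\psi^{2p-2}$ (giving the factor $(n-1)$ discrepancy between the $m_H$-power in $\mathcal{M}$ and the $\mathrm{Ric}$-power in $\mathcal{N}$, i.e. why $\mathcal{N}$ carries the prefactor $(n-1)$ but $\mathcal{M}$ does not). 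I also need the condition $p > \tfrac n2$ (and $p > \tfrac 54$ when $n=2$) precisely to guarantee $2p - n > 0$ for the absorption step and to make $\int_0^r \sn_H^2(t)^{\text{(something)}}\mathcal{A}_H\,dt$-type quantities finite after Hölder; this threshold should be tracked carefully since it differs slightly from Theorem \ref{Mainthm}'s.
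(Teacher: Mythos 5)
There is a genuine gap, and it lies precisely where you steer away from what turns out to be the paper's essential step.

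The differential inequality you state,
\[
\psi' + \frac{\psi^2}{n-1} + \frac{2m_H \psi}{n-1} + \frac{2(\partial_r f)\psi}{n-1} \le {\mathrm{Ric}^H_f}_- + \frac{(\partial_r f)^2}{n-1},
\]
is not what the Riccati inequality gives. Expanding $(m_f-m_H+\partial_rf)(m_f+m_H+\partial_rf)$ on the set $\{m_f>m_H\}$ gives
\[
\psi' + \frac{\psi^2}{n-1} + \frac{2(m_H+\partial_r f)\psi}{n-1} \le {\mathrm{Ric}^H_f}_- - \frac{\partial_r f(2m_H+\partial_r f)}{n-1}
= {\mathrm{Ric}^H_f}_- - \frac{2m_H\partial_r f}{n-1} - \frac{(\partial_r f)^2}{n-1},
\]
and one cannot replace the right-hand side by ${\mathrm{Ric}^H_f}_- + \tfrac{(\partial_r f)^2}{n-1}$: the required comparison $-2m_H\partial_rf - (\partial_rf)^2 \le (\partial_rf)^2$ fails, e.g., for $\partial_rf$ small and negative with $m_H>0$. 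More to the point, even the correct right-hand side contains a $-\tfrac{(\partial_r f)^2}{n-1}$ term, and after you multiply by $(2p-1)\psi^{2p-2}\mathcal{A}_f$ this becomes a term proportional to $(\partial_r f)^2\psi^{2p-2}\mathcal{A}_f$ that the integrating factor $e^{\frac{4p-2}{n-1}f}$ simply cannot reach: differentiating the exponential produces only terms linear in $\partial_r f$, multiplied by $\psi^{2p-1}$, and those match and cancel the $\tfrac{4p-2}{n-1}(\partial_rf)\psi^{2p-1}\mathcal{A}_f$ drift term exactly as you say. There is no ``grouping'' or ``completing the square in $\partial_r f$'' that makes the leftover quadratic-in-$\partial_r f$ piece vanish or become sign-definite on its own. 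The paper resolves this at the level of the scalar ODE, before any integrating factor is introduced, via the one-line estimate
\[
-\frac{\partial_r f(2m_H + \partial_r f)}{n-1} = -\frac{(\partial_r f + m_H)^2}{n-1} + \frac{m_H^2}{n-1} \le \frac{m_H^2}{n-1},
\]
discarding the nonpositive square and keeping only the $m_H^2$ residue. This is exactly the step you brushed aside in your opening sentences as ``introducing $(\partial_r f)^2$, which we do not want either'' --- but it is the step that eliminates $(\partial_r f)^2$, at the cost of introducing the $m_H^2$ source that gives rise to $\mathcal{M}(r)$. After this, your remaining bookkeeping with $\sn_H^2 e^{\frac{4p-2}{n-1}f}$, the sign condition $4p-n-3\ge 0$ (hence $p>\tfrac54$ when $n=2$), the integration from $0$ to $r$, and the H\"older split go through essentially as you describe. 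Two coefficient corrections once you make the fix: the $m_H^2$ source appears with factor $\tfrac{1}{n-1}$, not $\tfrac{1}{(n-1)^2}$, so the integrated inequality reads
\[
\sn_H^2(r)e^{\frac{4p-2}{n-1}f(r)}\psi^{2p-1}\mathcal{A}_f + \frac{2p-n}{n-1}\int_0^r \sn_H^2 e^{\frac{4p-2}{n-1}f}\psi^{2p}\mathcal{A}_f\,dt
\le \frac{2p-1}{n-1}\int_0^r \sn_H^2 e^{\frac{4p-2}{n-1}f}\Big(m_H^2 + (n-1){\mathrm{Ric}^H_f}_-\Big)\psi^{2p-2}\mathcal{A}_f\,dt,
\]
and it is this relative weight $(n-1)$ on ${\mathrm{Ric}^H_f}_-$ versus $m_H^2$, not a completion of square on the $m_H\psi$ drift, that accounts for the $(n-1)$ prefactor in $\mathcal{N}$.
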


It is unlucky that our theorem doesn't recover the classical case when the Ricci tensor has
pointwise lower bound and $f$ is constant. The main reason may be that we do not nicely
deal with the ``bad" term in the proof (see \eqref{keyinequ4} below). It is interesting to
know whether one has an improved estimate, which solves this problem.

\begin{proof}[Proof of Theorem \ref{Mainthm2}]
The proof's trick is partly inspired by the work of Wei-Wylie \cite{[WW]} and Petersen-Wei
\cite{[PeWe]}. Recall that,
\begin{equation*}
\begin{aligned}
(m_f-m_H)'\leq&-\frac{1}{n-1}[(m_f-m_H+\partial_r f)(m_f+m_H+\partial_r f)]+{\mathrm{Ric}^H_f}_-\\
=&-\frac{1}{n-1}\Big[(m_f-m_H)^2 +2(m_H+\partial_rf)(m_f-m_H)\\
&\qquad\qquad\qquad\qquad\qquad\qquad+\partial_r f(2m_H+\partial_r f)\Big]+{\mathrm{Ric}^H_f}_-.
\end{aligned}
\end{equation*}
Let $\psi:=(m_f-m_H)_+$. Then
\[
\psi'+\frac{\psi^2}{n-1}+\frac{2(m_H+\partial_rf)}{n-1}\psi
\leq-\frac{\partial_r f}{n-1}(2m_H+\partial_r f)+{\mathrm{Ric}^H_f}_-.
\]
When $\partial_r f=0$ and ${\mathrm{Ric}^H_f}_-=0$, we have $\psi=0$, and get the classical
mean curvature comparison. In general, notice that
\begin{equation}\label{keyinequ4}
-\frac{\partial_r f}{n-1}(2m_H+\partial_r f)=-\frac{(\partial_r f+m_H)^2}{n-1}+\frac{m^2_H}{n-1}
\le \frac{m^2_H}{n-1}.
\end{equation}
Therefore,
\[
\psi'+\frac{\psi^2}{n-1}+\frac{2(m_H+\partial_rf)}{n-1}\psi\leq\frac{m_H^2}{n-1}+{\mathrm{Ric}^H_f}_-.
\]
Multiplying this inequality by $(2p-1)\psi^{2p-2}\mathcal{A}_f$, we have
\begin{equation}
\begin{aligned}\label{keyineq1}
(2p-1)\psi^{2p-2}\psi'\mathcal{A}_f&+\frac{2p-1}{n-1}\psi^{2p}\mathcal{A}_f
+\frac{4p-2}{n-1}(m_H+\partial_rf)\psi^{2p-1}\mathcal{A}_f\\
&\quad\leq\frac{2p-1}{n-1}m_H^2\psi^{2p-2}\mathcal{A}_f+(2p-1){\mathrm{Ric}^H_f}_-\cdot\psi^{2p-2}\mathcal{A}_f.
\end{aligned}
\end{equation}
Notice that
\begin{equation*}
\begin{aligned}
(\psi^{2p-1} \mathcal{A}_f)' &=(2p-1)\psi^{2p-2}\psi'\,\mathcal{A}_f+\psi^{2p-1}\mathcal{A}'_f\\
&=(2p-1)\psi^{2p-2}\psi'\,\mathcal{A}_f+\psi^{2p-1} m_f\mathcal{A}_f.
\end{aligned}
\end{equation*}
So, \eqref{keyineq1} can be rewritten as
\begin{equation*}
\begin{aligned}
(\psi^{2p-1} \mathcal{A}_f)'-\psi^{2p-1}&m_f\mathcal{A}_f+\frac{2p-1}{n-1}\psi^{2p}\mathcal{A}_f
+\frac{4p-2}{n-1}(m_H+\partial_rf)\psi^{2p-1}\mathcal{A}_f\\
&\leq\frac{2p-1}{n-1}m_H^2\psi^{2p-2}\mathcal{A}_f+(2p-1){\mathrm{Ric}^H_f}_-\cdot\psi^{2p-2}\mathcal{A}_f.
\end{aligned}
\end{equation*}
Rearranging some terms by using $\psi:=(m_f-m_H)_+$, we have
\begin{equation*}
\begin{aligned}
(\psi^{2p-1} \mathcal{A}_f)'+\frac{2p-n}{n-1}&\psi^{2p}\mathcal{A}_f
+\left(\frac{4p-n-1}{n-1}m_H+\frac{4p-2}{n-1}\partial_rf\right)\psi^{2p-1}\mathcal{A}_f\\
&\leq\frac{2p-1}{n-1}m_H^2\psi^{2p-2}\mathcal{A}_f+(2p-1){\mathrm{Ric}^H_f}_-\cdot\psi^{2p-2}\mathcal{A}_f.
\end{aligned}
\end{equation*}
Multiplying this by the integrating factor  $\sn_H^2(r)e^{\frac{4p-2}{n-1} f(r)}$, we obtain
\begin{equation}
\begin{aligned}\label{zhongyao}
&\left[\sn_H^2(r) e^{\frac{4p-2}{n-1}f(r)}\psi^{2p-1} \mathcal{A}_f\right]'
+\frac{2p-n}{n-1}\sn_H^2(r)e^{\frac{4p-2}{n-1} f(r)}\psi^{2p}\mathcal{A}_f\\
&\qquad\qquad\qquad\qquad\qquad+\frac{4p-n-3}{n-1}\sn_H^2(r) e^{\frac{4p-2}{n-1}f(r)}m_H\psi^{2p-1}\mathcal{A}_f\\
&\leq\frac{2p-1}{n-1}\sn_H^2(r)e^{\frac{4p-2}{n-1} f(r)}m_H^2\psi^{2p-2}\mathcal{A}_f\\
&\qquad\qquad\qquad\qquad\qquad+(2p-1)\sn_H^2(r)e^{\frac{4p-2}{n-1}f(r)}{\mathrm{Ric}^H_f}_-\cdot\psi^{2p-2}\mathcal{A}_f,
\end{aligned}
\end{equation}
where we used $m_H(r)=(n-1)\frac{\sn_H'(r)}{\sn_H(r)}$. Since
$p>n/2$ when $n\geq 3$, and $p>5/4$ when $n=2$, then
\[
\frac{4p-n-3}{n-1}\sn_H^2(r)e^{\frac{4p-2}{n-1}f(r)} m_H\psi^{2p-1}\mathcal{A}_f\geq0.
\]
Hence we can throw away this term from the above inequality, and get
\begin{equation*}
\begin{aligned}
&\left[\sn_H^2(r) e^{\frac{4p-2}{n-1}f(r)}\psi^{2p-1}\mathcal{A}_f\right]'
+\frac{2p-n}{n-1}\sn_H^2(r)e^{\frac{4p-2}{n-1}f(r)}\psi^{2p}\mathcal{A}_f\\
&\leq\frac{2p{-}1}{n{-}1}\Bigg[\sn_H^2(r)e^{\frac{4p-2}{n-1} f(r)}m_H^2\psi^{2p-2}\mathcal{A}_f+(n{-}1)\sn_H^2(r)e^{\frac{4p-2}{n-1}f(r)}{\mathrm{Ric}^H_f}_-\cdot\psi^{n-1}\mathcal{A}_f\Bigg].
\end{aligned}
\end{equation*}
Since
\[
\sn_H^2(t)e^{\frac{4p-2}{n-1}f(t)}\psi^{2p-1}\mathcal{A}_f\Big|_{t=0}=0,
\]
integrating the above inequality from $0$ to $r$ yields
\begin{equation*}
\begin{aligned}
&\sn_H^2(r)e^{\frac{4p-2}{n-1}f(r)}\psi^{2p-1}\mathcal{A}_f+\frac{2p-n}{n-1}\int^r_0\sn_H^2(t)e^{\frac{4p-2}{n-1}f(t)}\psi^{2p}\mathcal{A}_fdt\\
&\leq\frac{2p-1}{n-1}\Bigg[\int^r_0\sn_H^2(t)e^{\frac{4p-2}{n-1}f(t)}m_H^2\psi^{2p-2}\mathcal{A}_fdt\\
&\qquad\qquad\quad+(n-1)\int^r_0\sn_H^2(t)e^{\frac{4p-2}{n-1}f(t)}{\mathrm{Ric}^H_f}_-\cdot\psi^{2p-2}\mathcal{A}_fdt\Bigg].
\end{aligned}
\end{equation*}
Since $p>n/2$, the first two terms of the above inequality are nonnegative. Hence,
\begin{equation}
\begin{aligned}\label{keyineu2}
&\sn_H^2(r)e^{\frac{4p-2}{n-1}f(r)}\psi^{2p-1}\mathcal{A}_f\\
&\leq\frac{2p-1}{n-1}\Bigg[\int^r_0\sn_H^2(t)e^{\frac{4p-2}{n-1}f(t)}m_H^2\psi^{2p-2}\mathcal{A}_fdt\\
&\qquad\qquad\quad+(n-1)\int^r_0\sn_H^2(t)e^{\frac{4p-2}{n-1}f(t)}{\mathrm{Ric}^H_f}_-\cdot\psi^{2p-2}\mathcal{A}_fdt\Bigg].
\end{aligned}
\end{equation}
and
\begin{equation}
\begin{aligned}\label{keyineu3}
&\frac{2p-n}{n-1}\int^r_0\sn_H^2(t)e^{\frac{4p-2}{n-1}f(t)}\psi^{2p}\mathcal{A}_fdt\\
&\leq\frac{2p-1}{n-1}\Bigg[\int^r_0\sn_H^2(t)e^{\frac{4p-2}{n-1}f(t)}m_H^2\psi^{2p-2}\mathcal{A}_fdt\\
&\qquad\qquad\quad+(n-1)\int^r_0\sn_H^2(t)e^{\frac{4p-2}{n-1}f(t)}{\mathrm{Ric}^H_f}_-\cdot\psi^{2p-2}\mathcal{A}_fdt\Bigg].
\end{aligned}
\end{equation}
By Holder inequality, we also have
\begin{equation}
\begin{aligned}\label{Holderina}
&\int^r_0\sn_H^2(t)e^{\frac{4p-2}{n-1}f(t)}m_H^2\psi^{2p-2}\mathcal{A}_fdt\\
&\leq\bigg[\int^r_0\sn_H^2(t)e^{\frac{4p-2}{n-1}f(t)}\psi^{2p} \mathcal{A}_fdt\bigg]^{1-\frac 1p}
\cdot\bigg[\int^r_0\sn_H^2(t)e^{\frac{4p-2}{n-1}f(t)}m_H^{2p} \mathcal{A}_fdt\bigg]^{\frac 1p}
\end{aligned}
\end{equation}
and
\begin{equation}
\begin{aligned}\label{Holderinb}
&\int^r_0\sn_H^2(t)e^{\frac{4p-2}{n-1}f(t)}{\mathrm{Ric}^H_f}_-\cdot\psi^{2p-2}\mathcal{A}_fdt\\
&\leq\bigg[\int^r_0\sn_H^2(t)e^{\frac{4p-2}{n-1}f(t)}\psi^{2p} \mathcal{A}_fdt\bigg]^{1-\frac 1p}
\cdot\bigg[\int^r_0\sn_H^2(t)e^{\frac{4p-2}{n-1}f(t)}({\mathrm{Ric}^H_f}_-)^p \mathcal{A}_fdt\bigg]^{\frac 1p}.
\end{aligned}
\end{equation}
Finally, combining \eqref{Holderina}, \eqref{Holderinb} and \eqref{keyineu3}, we obtain
\begin{equation}
\begin{aligned}\label{impoineq}
\bigg[\int^r_0\sn_H^2(t)&e^{\frac{4p-2}{n-1}f(t)}\psi^{2p}\mathcal{A}_fdt\bigg]^{\frac 1p}
\leq\frac{2p-1}{2p-n}\bigg[\int^r_0\sn_H^2(t)e^{\frac{4p-2}{n-1}f(t)}m_H^{2p} \mathcal{A}_fdt\bigg]^{\frac 1p}\\
&\qquad+\frac{(n-1)(2p-1)}{2p-n}\bigg[\int^r_0\sn_H^2(t)e^{\frac{4p-2}{n-1}f(t)}({\mathrm{Ric}^H_f}_-)^p \mathcal{A}_fdt\bigg]^{\frac 1p},
\end{aligned}
\end{equation}
which implies \eqref{meanccom1}. Combining \eqref{meanccom1}, \eqref{Holderina},
\eqref{Holderinb} and \eqref{keyineu2} yields \eqref{meanccom2}.

\vspace{.1in}

When $H>0$ and $\frac{\pi}{2\sqrt{H}}<r<\frac{\pi}{\sqrt{H}}$, we see that $m_H<0$ in
\eqref{zhongyao}. Similar to those discussion as before, multiplying by the
integrating factor $\sin^{4p-n-3}(\sqrt{H}r)$ in \eqref{zhongyao} and
integrating from $0$ to $r$, we get
\begin{equation}
\begin{aligned}\label{zhongyao2}
&\sin^{4p-n-1}(\sqrt{H}r)\cdot e^{\frac{4p-2}{n-1}f(r)}\psi^{2p-1} \mathcal{A}_f\\
&\qquad\qquad+\frac{2p-n}{n-1}\int^r_0\sin^{4p-n-1}(\sqrt{H}t)\cdot e^{\frac{4p-2}{n-1} f(t)}\psi^{2p}\mathcal{A}_f\,dt\\
&\leq\frac{2p-1}{n-1}\Bigg[\int^r_0\sin^{4p-n-1}(\sqrt{H}t)\cdot e^{\frac{4p-2}{n-1}f(t)}m_H^2\psi^{2p-2}\mathcal{A}_fdt\\
&\qquad\qquad\quad+(n-1)\int^r_0\sin^{4p-n-1}(\sqrt{H}t)\cdot e^{\frac{4p-2}{n-1}f(t)}{\mathrm{Ric}^H_f}_-\cdot\psi^{2p-2}\mathcal{A}_fdt\Bigg].
\end{aligned}
\end{equation}
Using Holder inequality as before we get
\begin{equation*}
\begin{aligned}
\bigg[&\int^r_0\sin^{4p-n-1}(\sqrt{H}t)e^{\frac{4p-2}{n-1}f(t)}\psi^{2p}\mathcal{A}_fdt\bigg]^{\frac 1p}\\
&\leq\frac{2p-1}{2p-n}\bigg[\int^r_0\sin^{4p-n-1}(\sqrt{H}t)e^{\frac{4p-2}{n-1}f(t)}m_H^{2p} \mathcal{A}_fdt\bigg]^{\frac 1p}\\
&\quad+\frac{(n-1)(2p-1)}{2p-n}\bigg[\int^r_0\sin^{4p-n-1}(\sqrt{H}t)e^{\frac{4p-2}{n-1}f(t)}({\mathrm{Ric}^H_f}_-)^p \mathcal{A}_fdt\bigg]^{\frac 1p},
\end{aligned}
\end{equation*}
which is \eqref{meanccomposi}. Finally we substitute \eqref{meanccomposi} into \eqref{zhongyao2} gives
\eqref{winteginesin2cc} by using the Holder inequality as before.
\end{proof}

\vspace{.1in}

In the following, we will apply mean curvature comparison estimate II to derive
another weighted volume comparison estimate in the integral sense. At first,
the weighted mean curvature comparison estimate II implies a tedious volume
comparison estimate of geodesic spheres.
\begin{theorem}\label{areacomp2}
Let $(M,g,e^{-f}dv)$ be an $n$-dimensional smooth metric measure space.
Let $H\in \mathbb{R}$ and $p>\frac n2$ when $n\geq 3$ ($p>\frac 54$ when $n=2$)
be given, and when $H>0$
assume that $R\leq \frac{\pi}{2\sqrt{H}}$. For $0<r\leq R$, we have
\begin{equation}
\begin{aligned}\label{areacompineqgen}
&\left(\frac{A_f(x,R)}{A_H(R)}\right)^{\frac{1}{2p-1}}-\left(\frac{A_f(x,r)}{A_H(r)}\right)^{\frac{1}{2p-1}}\\
&\leq C(n,p)\int^R_r\Big(\mathcal{M}(t)+\mathcal{N}(t)\Big)^{\frac{p}{2p-1}}
\,\sn_H^{-\frac{2}{2p-1}}(t)\,e^{-\frac{2f(t)}{n-1}}\,A_H^{-\frac{1}{2p-1}}(t)\,dt,
\end{aligned}
\end{equation}
where $$C(n,p):=\left(\frac{2p{-}n}{n{-}1}\right)^{\frac{1}{2p{-}1}}
\left(\frac{2p{-}1}{2p{-}n}\right)^{\frac{p}{2p{-}1}},$$
$$\mathcal{M}(t):=\bigg(\int^t_0\sn_H^2(s)\,e^{\frac{4p-2}{n-1}f(s)}m_H^{2p}\mathcal{A}_fds\bigg)^{\frac 1p},$$
and
$$\mathcal{N}(t):=(n-1)\bigg(\int^t_0\sn_H^2(s)\,e^{\frac{4p-2}{n-1}f(s)}({\mathrm{Ric}^H_f}_-)^p\mathcal{A}_fds\bigg)^{\frac 1p}.$$

In particular, if further assume $|f|\leq k$ for some constant $k\geq 0$; and
$\frac n2<p<\frac n2+1$ when $n\geq 3$ (when $n=2$, we assume $\frac 54<p<2$).
For $0<r\leq R$, we have
\begin{equation}
\begin{aligned}\label{areacompineq2}
&\left(\frac{A_f(x,R)}{A_H(R)}\right)^{\frac{1}{2p-1}}-\left(\frac{A_f(x,r)}{A_H(r)}\right)^{\frac{1}{2p-1}}\\
&\leq C(n,p)\,e^{\frac{4k}{n-1}}\,\bigg(\mathcal{P}(R)+\mathcal{Q}(R)\bigg)^{\frac{p}{2p-1}}
\int^R_r \sn_H^{-\frac{2}{2p-1}}(t)\,A_H^{-\frac{1}{2p-1}}(t)dt,
\end{aligned}
\end{equation}
where
$$\mathcal{P}(R):=\bigg(\int^R_0\sn_H^2(t)m_H^{2p}\mathcal{A}_fdt\bigg)^{\frac 1p}$$
and
$$\mathcal{Q}(R):=(n-1)\bigg(\int^R_0\sn_H^2(t)({\mathrm{Ric}^H_f}_-)^p\mathcal{A}_fdt\bigg)^{\frac 1p}.$$
\end{theorem}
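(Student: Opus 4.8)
The plan is to reproduce, almost verbatim, the scheme used to prove Theorem~\ref{areacomp}, but with the general pointwise mean curvature estimate \eqref{meanccom2} of Theorem~\ref{Mainthm2} fed in where \eqref{wintegine2} was used, and with the comparison taken against the \emph{unweighted} model area $A_H$ rather than $A^a_H$. From the first-variation identities $\mathcal{A}_f' = m_f\mathcal{A}_f$ and $\mathcal{A}_H' = m_H\mathcal{A}_H$ one gets
\[
\frac{d}{dt}\left(\frac{\mathcal{A}_f(t,\theta)}{\mathcal{A}_H(t)}\right) = (m_f-m_H)\frac{\mathcal{A}_f(t,\theta)}{\mathcal{A}_H(t)} \le \psi\cdot\frac{\mathcal{A}_f(t,\theta)}{\mathcal{A}_H(t)},
\]
with $\psi=(m_f-m_H)_+$, and, integrating over $S^{n-1}$, $\frac{d}{dt}\big(A_f(x,t)/A_H(t)\big)\le A_H(t)^{-1}\int_{S^{n-1}}\psi\,\mathcal{A}_f\,d\theta_{n-1}$.

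The next step is Hölder on $S^{n-1}$ with exponents $2p-1$ and $\tfrac{2p-1}{2p-2}$, giving $\int_{S^{n-1}}\psi\,\mathcal{A}_f\,d\theta_{n-1}\le\big(\int_{S^{n-1}}\psi^{2p-1}\mathcal{A}_f\,d\theta_{n-1}\big)^{\frac{1}{2p-1}}A_f(x,t)^{1-\frac{1}{2p-1}}$, followed by insertion of the pointwise bound \eqref{meanccom2}, i.e.\ $\psi^{2p-1}\mathcal{A}_f\le\frac{(2p-1)^p}{(n-1)(2p-n)^{p-1}}(\mathcal{M}(t)+\mathcal{N}(t))^p\,\sn_H^{-2}(t)\,e^{-\frac{4p-2}{n-1}f(t)}$ along the geodesic. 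Using the elementary identity $\frac{1}{2p-1}\cdot\frac{4p-2}{n-1}=\frac{2}{n-1}$, taking $(2p-1)$-th roots turns the curvature factor into $C(n,p)\,(\mathcal{M}(t)+\mathcal{N}(t))^{\frac{p}{2p-1}}\sn_H^{-\frac{2}{2p-1}}(t)\,e^{-\frac{2f(t)}{n-1}}$ with $C(n,p)$ as in the statement. The resulting differential inequality
\[
\frac{d}{dt}\left(\frac{A_f(x,t)}{A_H(t)}\right) \le C(n,p)\,(\mathcal{M}(t)+\mathcal{N}(t))^{\frac{p}{2p-1}}\,\sn_H^{-\frac{2}{2p-1}}(t)\,e^{-\frac{2f(t)}{n-1}}\,A_H(t)^{-\frac{1}{2p-1}}\left(\frac{A_f(x,t)}{A_H(t)}\right)^{1-\frac{1}{2p-1}}
\]
is then handled by separating variables (differentiating $(A_f/A_H)^{\frac{1}{2p-1}}$) and integrating from $r$ to $R$, which yields \eqref{areacompineqgen}. (Separation of variables in fact produces the constant $C(n,p)/(2p-1)$, which is $\le C(n,p)$ since $p>n/2\ge1$, so the displayed $C(n,p)$ is admissible.)

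For the second assertion one simply uses $|f|\le k$: inside $\mathcal{M}(t),\mathcal{N}(t)$ replace $e^{\frac{4p-2}{n-1}f}$ by $e^{\frac{4p-2}{n-1}k}$ to get $\mathcal{M}(t)+\mathcal{N}(t)\le e^{\frac{2(2p-1)}{(n-1)p}k}\big(\mathcal{P}(R)+\mathcal{Q}(R)\big)$ for $t\le R$, so that $(\mathcal{M}(t)+\mathcal{N}(t))^{\frac{p}{2p-1}}\le e^{\frac{2k}{n-1}}(\mathcal{P}(R)+\mathcal{Q}(R))^{\frac{p}{2p-1}}$; combining this with the bound $e^{-\frac{2f(t)}{n-1}}\le e^{\frac{2k}{n-1}}$ gives the overall factor $e^{\frac{4k}{n-1}}$, and pulling the $R$-dependent curvature quantity out of the $t$-integral produces \eqref{areacompineq2}.

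The one point that needs real care is convergence, and it is the place where the hypothesis on $p$ enters. Near $t=0$ one has $\sn_H(t)\sim t$, $m_H(t)\sim(n-1)/t$, $\mathcal{A}_f(t,\theta)\sim e^{-f(x)}t^{n-1}$, so the integrand of $\mathcal{P}(R)$ (and of $\mathcal{M}(t)$) behaves like $t^{\,n+1-2p}$, which is integrable near $0$ exactly when $p<\tfrac n2+1$; this is why the upper bound on $p$ appears in the second assertion, while for $p\ge\tfrac n2+1$ the right-hand side of \eqref{areacompineqgen} is simply $+\infty$ and the first assertion is vacuously true. By contrast the outer integral $\int_r^R\sn_H^{-\frac{2}{2p-1}}(t)A_H^{-\frac{1}{2p-1}}(t)\,dt$ has integrand $\sim t^{-(n+1)/(2p-1)}$ near $0$ and would need $p>\tfrac n2+1$ to converge down to $t=0$, but since we only integrate from $r>0$ this is harmless — the integrand is continuous on $[r,R]$. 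Apart from this bookkeeping, everything is a direct transcription of the proof of Theorem~\ref{areacomp}, and I expect the tracking of the weight $\sn_H^2 e^{\frac{4p-2}{n-1}f}$ through the Hölder and separation-of-variables steps to be the only genuinely delicate part.
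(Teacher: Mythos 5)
Your argument reproduces the paper's own proof of Theorem~\ref{areacomp2} step by step: the same first-variation identities $\mathcal{A}_f'=m_f\mathcal{A}_f$, $\mathcal{A}_H'=m_H\mathcal{A}_H$, the same H\"older inequality on $S^{n-1}$ with exponent $2p-1$, the same insertion of the pointwise bound \eqref{meanccom2}, the same separation of variables (correctly noting that this actually yields the sharper constant $C(n,p)/(2p-1)$), and the same use of $|f|\le k$ splitting the exponential weight to produce the factor $e^{4k/(n-1)}$. Your side remarks on the $t\to 0$ asymptotics correctly explain the role of the restriction $p<\tfrac n2+1$ in the second assertion, consistent with the paper's own remark following the theorem.
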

\begin{remark}
We remark that $\mathcal{P}(R)$ converges when $\frac n2<p<\frac n2+1$, $n\geq 3$
(when $\frac 54<p<2$, $n=2$). However, for such $p$, if $r\to 0$, the integral $$\int^R_r \sn_H^{-\frac{2}{2p-1}}(t)\,A_H^{-\frac{1}{2p-1}}(t)dt$$ blows up.
\end{remark}

\begin{proof}[Proof of Theorem \ref{areacomp2}]
We apply
$\mathcal{A}'_f=m_f \mathcal{A}_f$ and $\mathcal{A}_H'=m_H\mathcal{A}_H$
to compute that
\[
\frac{d}{dt}\left(\frac{\mathcal{A}_f(t,\theta)}{\mathcal{A}_H(t)}\right)
=(m_f-m_H)\frac{\mathcal{A}_f(t,\theta)}{\mathcal{A}_H(t)}.
\]
Hence,
\begin{equation*}
\begin{aligned}
\frac{d}{dt}\left(\frac{A_f(x,t)}{A_H(t)}\right)
&=\frac{1}{Vol(S^{n-1})}\int_{S^{n-1}}\frac{d}{dt}\left(\frac{\mathcal{A}_f(t,\theta)}{\mathcal{A}_H(t)}\right)d\theta_{n-1}\\
&\leq\frac{1}{A_H(t)}\int_{S^{n-1}}\psi\cdot\mathcal{A}_f(t,\theta)d\theta_{n-1}.
\end{aligned}
\end{equation*}
Using Holder's inequality and \eqref{meanccom2}, we have
\begin{equation*}
\begin{aligned}
\int_{S^{n-1}}&\psi\cdot\mathcal{A}_f(t,\theta)d\theta_{n-1}\\
&\leq\left(\int_{S^{n-1}}\psi^{2p-1} \mathcal{A}_f(x,t)d\theta_{n-1}\right)^{\frac{1}{2p-1}}
A_f(x,t)^{1-\frac{1}{2p-1}}\\
&\leq C(n,p)\,\sn_H^{-\frac{2}{2p-1}}(t)\,e^{-\frac{2f(t)}{n-1}}\cdot\bigg(\mathcal{M}(t)+\mathcal{N}(t)\bigg)^{\frac{p}{2p-1}}
A_f(x,t)^{1-\frac{1}{2p-1}},
\end{aligned}
\end{equation*}
where $$C(n,p):=\left(\frac{2p-n}{n-1}\right)^{\frac{1}{2p-1}}
\,\left(\frac{2p-1}{2p-n}\right)^{\frac{p}{2p-1}},$$
$$\mathcal{M}(t):=\bigg(\int^t_0\sn_H^2(s)\,e^{\frac{4p-2}{n-1}f(s)}m_H^{2p}\mathcal{A}_fds\bigg)^{\frac 1p},$$
and
$$\mathcal{N}(t):=(n-1)\bigg(\int^t_0\sn_H^2(s)\,e^{\frac{4p-2}{n-1}f(s)}({\mathrm{Ric}^H_f}_-)^p\mathcal{A}_fds\bigg)^{\frac 1p}.$$
Hence,
\begin{equation}
\begin{aligned}\label{keyinequ2k}
\frac{d}{dt}\left(\frac{A_f(x,t)}{A_H(t)}\right)\leq C(n,p)&\,\sn_H^{-\frac{2}{2p-1}}(t)\,e^{-\frac{2f(t)}{n-1}}\cdot\left(\frac{A_f(x,t)}{A_H(t)}\right)^{1-\frac{1}{2p-1}}\\
&\times\bigg(\mathcal{M}(t)+\mathcal{N}(t)\bigg)^{\frac{p}{2p-1}}
\left(\frac{1}{A_H(t)}\right)^{\frac{1}{2p-1}}.
\end{aligned}
\end{equation}
Separating of variables and integrating from $r$ to $R$, we obtain
\begin{equation*}
\begin{aligned}
&\left(\frac{A_f(x,R)}{A_H(R)}\right)^{\frac{1}{2p-1}}-\left(\frac{A_f(x,r)}{A_H(r)}\right)^{\frac{1}{2p-1}}\\
&\leq\frac{C(n,p)}{2p-1}\,\int^R_r\Big(\mathcal{M}(t)+\mathcal{N}(t)\Big)^{\frac{p}{2p-1}}
\,\sn_H^{-\frac{2}{2p-1}}(t)\,e^{-\frac{2f(t)}{n-1}}\,A_H^{-\frac{1}{2p-1}}(t)dt.
\end{aligned}
\end{equation*}
Therefore we prove the first part of conclusions.

\vspace{.1in}

If we further assume $|f|\leq k$ for some constant $k\geq 0$, then
\begin{equation*}
\begin{aligned}
&\left(\frac{A_f(x,R)}{A_H(R)}\right)^{\frac{1}{2p-1}}-\left(\frac{A_f(x,r)}{A_H(r)}\right)^{\frac{1}{2p-1}}\\
&\leq\frac{C(n,p)}{2p-1}\,e^{\frac{4k}{n-1}}\,\bigg(\mathcal{P}(R)+\mathcal{Q}(R)\bigg)^{\frac{p}{2p-1}}
\int^R_r \sn_H^{-\frac{2}{2p-1}}(t)\,A_H^{-\frac{1}{2p-1}}(t)dt,
\end{aligned}
\end{equation*}
where
$$\mathcal{P}(R):=\bigg(\int^R_0\sn_H^2(t)m_H^{2p}\mathcal{A}_fdt\bigg)^{\frac 1p}$$
and
$$\mathcal{Q}(R):=(n-1)\bigg(\int^R_0\sn_H^2(t)({\mathrm{Ric}^H_f}_-)^p\mathcal{A}_fdt\bigg)^{\frac 1p}.$$
Notice that $\mathcal{P}(R)$ converges when $\frac n2<p<\frac n2+1$ if $n\geq 3$
(if $n=2$, we assume $\frac 54<p<2$). Hence the result follows.
\end{proof}

\vspace{.1in}

Similar to the first case of discussions (the case $\partial_r f\geq-a$), we can apply
\eqref{keyinequ2k} to obtain the following volume comparison estimate when
$f$ is bounded.
\begin{theorem}[Relative volume comparison estimate II]\label{volcomp2}
Let $(M,g,e^{-f}dv)$ be an $n$-dimensional smooth metric measure space.
Assume that
\[
|f(x)|\leq k
\]
for some constant $k\geq 0$. Let $H\in \mathbb{R}$ and $\frac n2<p<\frac n2+1$ when $n\geq 3$
(when $n=2$, we assume $\frac 54<p<2$)  be given, and when $H>0$
assume that $R\leq \frac{\pi}{2\sqrt{H}}$. For $0<r\leq R$, we have
\begin{equation}
\begin{aligned}\label{volcomp2eq}
&\left(\frac{V_f(x,R)}{V_H(R)}\right)^{\frac{1}{2p-1}}-\left(\frac{V_f(x,r)}{V_H(r)}\right)^{\frac{1}{2p-1}}\\
&\qquad\leq C(n,p)\,e^{\frac{4k}{n-1}}
\bigg(\mathcal{P}(R)+\mathcal{Q}(R)\bigg)^{\frac{p}{2p-1}}\int^R_rA_H(t)\left(\frac{t^{1-\frac 1p}}{V_H(t)}\right)^{\frac{2p}{2p-1}}dt.
\end{aligned}
\end{equation}
Here,
\[
C(n,p):=\frac{4p-2}{p-1}\left(\frac{(n-1)^{-\frac{1}{p-1}}}{(2p-1)(2p-n)}\right)^{\frac{p-1}{2p-1}},
\]
\[
\mathcal{P}(R):=\bigg(\int^R_0\sn_H^2(t)m_H^{2p}\mathcal{A}_fdt\bigg)^{\frac 1p}
\]
and
$$\mathcal{Q}(R):=(n-1)\bigg(\int^R_0\sn_H^2(t)({\mathrm{Ric}^H_f}_-)^p\mathcal{A}_fdt\bigg)^{\frac 1p}.$$
\end{theorem}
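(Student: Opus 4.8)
The plan is to re-run the Petersen--Wei argument behind Theorem~\ref{volcomp}, but starting from the differential area inequality \eqref{keyinequ2k} of Theorem~\ref{areacomp2} in place of \eqref{keyinequ}, and converting the $f$-weighted curvature integrals $\mathcal{M},\mathcal{N}$ into the unweighted $\mathcal{P},\mathcal{Q}$ by means of the hypothesis $|f|\le k$. Since $e^{(4p-2)f/(n-1)}\le e^{(4p-2)k/(n-1)}$ and $\tfrac{4p-2}{(n-1)(2p-1)}=\tfrac2{n-1}$, and since $\mathcal{M}(t),\mathcal{N}(t)$ are nondecreasing in $t$, one has $(\mathcal{M}(t)+\mathcal{N}(t))^{p/(2p-1)}\le e^{2k/(n-1)}(\mathcal{P}(r)+\mathcal{Q}(r))^{p/(2p-1)}$ for $t\le r$; together with the $e^{-2f(t)/(n-1)}\le e^{2k/(n-1)}$ already present in \eqref{keyinequ2k} this produces the factor $e^{4k/(n-1)}$ of the statement.

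First I would integrate \eqref{keyinequ2k} from $t$ to $r$, obtaining
\[
\frac{A_f(x,r)}{A_H(r)}-\frac{A_f(x,t)}{A_H(t)}\ \le\ C(n,p)e^{4k/(n-1)}\big(\mathcal{P}(r)+\mathcal{Q}(r)\big)^{\frac{p}{2p-1}}\int_t^r\frac{\sn_H^{-2/(2p-1)}(s)}{A_H(s)}\,A_f(x,s)^{1-\frac1{2p-1}}\,ds.
\]
The new feature relative to Theorem~\ref{volcomp} is the weight $\sn_H^{-2/(2p-1)}(s)/A_H(s)$, which is singular at $s=0$; rather than pulling it out at the endpoint $s=t$ as Petersen--Wei do, I would multiply through by $A_H(t)$, integrate $t$ over $[0,r]$, and interchange the order of the $s$- and $t$-integrations. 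Since $\int_0^sA_H(t)\,dt=V_H(s)$ this gives
\[
\frac{A_f(x,r)}{A_H(r)}V_H(r)-V_f(x,r)\ \le\ C(n,p)e^{4k/(n-1)}\big(\mathcal{P}(R)+\mathcal{Q}(R)\big)^{\frac p{2p-1}}\int_0^r\frac{V_H(s)}{A_H(s)}\,\sn_H^{-2/(2p-1)}(s)\,A_f(x,s)^{1-\frac1{2p-1}}\,ds,
\]
where the factor $V_H(s)/A_H(s)\sim s$ tames the singularity, the integrand now being $\lesssim s^{(2p-3)/(2p-1)}A_f(x,s)^{(2p-2)/(2p-1)}$ near $0$, integrable for $p>1$.

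Applying H\"older on $[0,r]$ with exponents $\tfrac{2p-1}{2p-2}$ and $2p-1$ bounds this integral by $V_f(x,r)^{1-1/(2p-1)}\big(\int_0^r(V_H(s)/A_H(s))^{2p-1}\sn_H^{-2}(s)\,ds\big)^{1/(2p-1)}$, and on the admissible range $V_H(s)/A_H(s)\le Cs$ and $\sn_H^{-2}(s)\le Cs^{-2}$ give $\int_0^r(V_H(s)/A_H(s))^{2p-1}\sn_H^{-2}(s)\,ds\le C(n,p)r^{2p-2}$; the resulting $1/(2(p-1))$ enters the constant $C(n,p)$. Feeding the estimate into
\[
\frac{d}{dr}\left(\frac{V_f(x,r)}{V_H(r)}\right)=\frac{A_H(r)}{V_H(r)^2}\left[\frac{A_f(x,r)}{A_H(r)}V_H(r)-V_f(x,r)\right],
\]
and using $r^{(2p-2)/(2p-1)}=(r^{1-1/p})^{2p/(2p-1)}$ together with $2-\tfrac{2p}{2p-1}=\tfrac{2p-2}{2p-1}$, one arrives at
\[
\frac{d}{dr}\left(\frac{V_f(x,r)}{V_H(r)}\right)\le C(n,p)e^{4k/(n-1)}\big(\mathcal{P}(R)+\mathcal{Q}(R)\big)^{\frac p{2p-1}}A_H(r)\left(\frac{r^{1-1/p}}{V_H(r)}\right)^{\frac{2p}{2p-1}}\left(\frac{V_f(x,r)}{V_H(r)}\right)^{1-\frac1{2p-1}}.
\]
Separating variables and integrating from $r$ to $R$ then yields \eqref{volcomp2eq}.

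The delicate point, and the reason for the restrictions on $p$, is precisely the interplay between the weight $\sn_H^{-2/(2p-1)}$ and the model quantities. On one side, $\mathcal{P}(R)$ is an $L^p$-type norm of $\sn_H^2m_H^{2p}\mathcal{A}_f\sim t^{\,n+1-2p}$, finite near $0$ only when $p<\tfrac n2+1$ (and the discarding of the $m_H$-term in \eqref{zhongyao}, inherited from Theorem~\ref{Mainthm2}, needs $4p-n-3\ge0$, i.e.\ $p>\tfrac54$ when $n=2$); on the other side, the surviving $s^{-2/(2p-1)}$ is exactly what forces $V_H(s)/A_H(s)$ into the estimate and hence the weaker factor $t^{1-1/p}$ in place of Petersen--Wei's $t$. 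One must also observe that $\int_r^RA_H(t)\big(t^{1-1/p}/V_H(t)\big)^{2p/(2p-1)}dt$ is finite only because $r>0$: it diverges as $r\to0$, consistent with the Remark following Theorem~\ref{areacomp2}. Arranging all the H\"older exponents and the order of the two radial integrations so that every $\sn_H$, $A_H$ and $V_H$ power collapses exactly into that integral, with the stated constant $C(n,p)$, is the one genuinely fiddly step.
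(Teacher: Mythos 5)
Your proof is correct and reaches the stated conclusion, but it takes a genuinely different route from the paper's through the one delicate spot, namely the singular weight $\sn_H^{-2/(2p-1)}(s)/A_H(s)$. The paper proceeds as in Petersen--Wei: after integrating \eqref{keyinequ2k} from $t$ to $r$, it bounds $1/A_H(s)\le 1/A_H(t)$ (using monotonicity of $A_H$ on the admissible range), applies H\"older to separate $\sn_H^{-2/(2p-1)}$ from $A_f^{1-1/(2p-1)}$, and then uses the pointwise estimate $\int_t^r\sn_H^{-2}(s)\,ds\le 4/t$; the surviving factor $t^{-1/(2p-1)}$ is only integrated against $A_H(t)$ at the very end when \eqref{deri2} is invoked. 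You instead multiply by $A_H(t)$ and integrate over $t\in[0,r]$ \emph{first}, interchange the $s$- and $t$-integrations to produce the factor $V_H(s)/A_H(s)\lesssim s$, and only then apply H\"older, absorbing the singularity into $\int_0^r(V_H/A_H)^{2p-1}\sn_H^{-2}(s)\,ds\lesssim r^{2p-2}$. The two routes give the same power $r^{(2p-2)/(2p-1)}=(r^{1-1/p})^{2p/(2p-1)}$ and the same constant up to universal factors (the paper's $4/t$ bound is itself slightly crude). Your version has a modest structural advantage: the Fubini interchange makes the taming of the singularity automatic and shows transparently where the restrictions $\tfrac n2<p<\tfrac n2+1$ (resp.\ $\tfrac54<p<2$ for $n=2$) come from, whereas the paper's pointwise bound at $s=t$ is a little more ad hoc. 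One small caveat: you should record that $V_H(s)/A_H(s)\le s$ and $\sn_H(s)\ge \tfrac{2}{\pi}s$ on $[0,\tfrac{\pi}{2\sqrt{H}}]$ (resp.\ $\sn_H(s)\ge s$ for $H\le 0$), so that the bound $\int_0^r(V_H/A_H)^{2p-1}\sn_H^{-2}\,ds\le C(n,p)r^{2p-2}$ carries a constant independent of $H$ and $R$, as the statement requires.
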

\begin{remark}
We remark that $\mathcal{P}(R)$ converges when $\frac n2<p<\frac n2+1$, $n\geq 3$ (when $\frac 54<p<2$, $n=2$).
However, for such $p$, if $r\to 0$, then the integral $$\int^R_rA_H(t)\left(\frac{t^{1-\frac 1p}}{V_H(t)}\right)^{\frac{2p}{2p-1}}dt$$ blows up.
\end{remark}

\begin{proof}[Proof of Theorem \ref{volcomp2}]
We use the formula
\[
\frac{V_f(x,r)}{V_H(r)}=\frac{\int_0^rA_f(x,t)dt}{\int_0^rA_H(t)dt},
\]
to compute that
\begin{equation}\label{deri2}
\frac{d}{dr}\left(\frac{V_f(x,r)}{V_H(r)}\right)
=\frac{A_f(x,r)\int_0^rA_H(t)dt-A_H(r)\int_0^rA_f(x,t)dt}{(V_H(r))^2}.
\end{equation}
On the other hand, integrating \eqref{keyinequ2k} from $t$ to $r$, and using the
Holder inequality, we get
\begin{equation*}
\begin{aligned}
&\frac{A_f(x,r)}{A_H(r)}-\frac{A_f(x,t)}{A_H(t)}\\
&\quad\leq C(n,p,k)\int_t^r
\frac{\big(\mathcal{P}(s)+\mathcal{Q}(s)\big)^{\frac{p}{2p-1}}}{\sn_H^{\frac{2}{2p-1}}(s)\cdot A_H(s)^{\frac{1}{2p-1}}}\cdot \left(\frac{A_f(x,s)}{A_H(s)}\right)^{1-\frac{1}{2p-1}}ds\\
&\quad\leq C(n,p,k)\frac{\big(\mathcal{P}(r)+\mathcal{Q}(r)\big)^{\frac{p}{2p-1}}}{A_H(t)}\cdot \int_t^r \sn_H^{-\frac{2}{2p-1}}(s)\cdot A_f(x,s)^{1-\frac{1}{2p-1}}ds\\
&\quad\leq C(n,p,k)\frac{\big(\mathcal{P}(r)+\mathcal{Q}(r)\big)^{\frac{p}{2p-1}}}{ A_H(t)}
\cdot\left(\int_t^r \sn_H^{-2}(s)ds\right)^{\frac{1}{2p-1}}\cdot V_f(x,r)^{1-\frac{1}{2p-1}}\\
&\quad\leq C(n,p,k)\frac{\big(\mathcal{P}(r)+\mathcal{Q}(r)\big)^{\frac{p}{2p-1}}}{ A_H(t)}
\cdot\left(\frac 4t\right)^{\frac{1}{2p-1}}\cdot V_f(x,r)^{1-\frac{1}{2p-1}},
\end{aligned}
\end{equation*}
where $$C(n,p,k):=\left(\frac{2p-n}{n-1}\right)^{\frac{1}{2p-1}}
\,\left(\frac{2p-1}{2p-n}\right)^{\frac{p}{2p-1}}\, e^{\frac{4k}{n-1}}.$$
This implies that
\begin{equation*}
\begin{aligned}
A_f&(x,r)A_H(t)-A_H(r)A_f(x,t)\\
&\leq 4C(n,p,k)\big(\mathcal{P}(r)+\mathcal{Q}(r)\big)^{\frac{p}{2p-1}}\cdot A_H(r)\cdot
t^{\frac{-1}{2p-1}}\,V_f(x,r)^{1-\frac{1}{2p-1}}.
\end{aligned}
\end{equation*}
Plugging this into \eqref{deri2} gives
\begin{equation*}
\begin{aligned}
&\frac{d}{dr}\left(\frac{V_f(x,r)}{V_H(r)}\right)\\
&\leq\frac{4(2p{-}1)}{2p{-}2}C(n,p,k)\big(\mathcal{P}(r){+}\mathcal{Q}(r)\big)^{\frac{p}{2p-1}}\,A_H(r)\cdot r^{\frac{2p-2}{2p-1}}\cdot \frac{V_f(x,r)^{1-\frac{1}{2p-1}}}{(V_H(r))^2}\\
&=\frac{4(2p{-}1)}{2p{-}2}C(n,p,k)\big(\mathcal{P}(r){+}\mathcal{Q}(r)\big)^{\frac{p}{2p-1}} A_H(r)
\left(\frac{r^{1{-}\frac 1p}}{V_H(r)}\right)^{\frac{2p}{2p-1}}\left(\frac{V_f(x,r)}{V_H(r)}\right)^{1{-}\frac{1}{2p-1}}.
\end{aligned}
\end{equation*}
Separating of variables and integrating from $r$ to $R$,
\begin{equation*}
\begin{aligned}
&\left(\frac{V_f(x,R)}{V_H(R)}\right)^{\frac{1}{2p-1}}-\left(\frac{V_f(x,r)}{V_H(r)}\right)^{\frac{1}{2p-1}}\\
&\leq\frac{2C(n,p,k)}{p-1}\,\big(\mathcal{P}(R)+\mathcal{Q}(R)\big)^{\frac{p}{2p-1}}\,
\int^R_rA_H(t)\left(\frac{t^{1-\frac 1p}}{V_H(t)}\right)^{\frac{2p}{2p-1}}dt.
\end{aligned}
\end{equation*}
Then the conclusion follows.
\end{proof}


\section{Applications of comparison estimates}\label{sec5}
In this section, we mainly apply mean curvature comparison estimate I and
volume comparison estimate I to prove the global diameter estimate, eigenvalue
upper estimate and the volume growth estimate when the normalized new
$L^p$-norm of Bakry-\'{E}mery Ricci tensor below $(n-1)H$ is small.

\vspace{.1in}

We first prove Theorem \ref{diam} in the introduction.
\begin{proof}[Proof of Theorem \ref{diam}]
Let $p_1$, $p_2$ are two points in $M$, and $x_0$ be a middle point between $p_1$ and $p_2$.
We also let $e(x)$ be the excess function for the points $p_1$ and $p_2$, i.e.
\[
e(x):=d(p_1,x)+d(p_2,x)-d(p_1,p_2).
\]
By the triangle inequality, we have
\[
e(x)\ge0\quad \mathrm{and}\quad e(x)\le 2r
\]
on a ball $B(x_0,r)$, where $r>0$. In the following, we will prove our result by
contradiction. That is, if there exist two points $p_1$, $p_2$ in $M$, such that
$d(p_1,p_2)>D$ for any sufficient large $D$, then we can show that excess function
$e$ is negative on $B(x_0,r)$, which is a contradiction. The detail discussion
is as follows.

By the mean curvature estimate \eqref{winteginesin}, by using a suitably large
comparison sphere (the radius is a little small than $\frac{\pi}{\sqrt{H}}$) we may
choose any large $D$ enough so that if $d(p_1,p_2)>D$,
then
\[
\Delta_f\,e\leq-K+\psi_1
\]
on $B(x_0,r)$, where $K$ is a large positive constant to be determined, and $\psi_1$
is an error term controlled by $C_1(n,p,a,H,r)\cdot\bar{k}(p,H,a,r)$.

Following the nice construction of Lemma 1.4 in Colding's paper in \cite{[Cold]}, let
$\Omega_j\subseteq B(x_0,r)$ be a sequence of smooth star-shaped domains
which converges to $B(x_0,r)-\mathrm{Cut}(x_0)$. Also let $u_i$ be a sequence of smooth
functions such that
\[
|u_i-e|<i^{-1},\quad|\nabla u_i|\leq 2+i^{-1},\quad \mathrm{and}
\quad\Delta_f\,u_i\le \Delta_f\,e+i^{-1}
\]
on $B(x_0,r)$. Set $h:=d^2(x_0,\cdot)-r^2$, and then $h$ is a negative smooth function
on $\Omega_j$. So by Green's formula with respect to the weighted measure $e^{-f}dv$, we have
\begin{equation*}
\int_{\Omega_j}(\Delta_f\,u_i) h-\int_{\Omega_j}u_i(\Delta_f\,h)
=\int_{\partial\Omega_j} h(\nu\,u_i)-\int_{\partial\Omega_j}u_i(\nu\,h),
\end{equation*}
where $\nu$ is the outward unit normal direction to $\Omega_j$. We notice that
\[
\Delta_f\,u_i\le-K+\psi_1+i^{-1}
\]
and
\begin{equation*}
\begin{aligned}
u_i(\Delta_f\,h)&\leq(e+i^{-1})(2d\,\Delta_f\,d+2)\\
&\leq(e+i^{-1})(2n+2ad+\psi_2)\\
&\leq3r(2n+2ar+\psi_2),
\end{aligned}
\end{equation*}
where $\psi_2$ is another error term still controlled by $C_2(n,p,a,H,r)\cdot\bar{k}(p,H,a,r)$.
Therefore, we have
\begin{equation*}
\int_{\Omega_j}(-K+\psi_1+i^{-1})h-3r\int_{\Omega_j}(2n+2ar+\psi_2)
\leq\int_{\partial\Omega_j} h(2+i^{-1})-\int_{\partial\Omega_j}u_i(\nu\,h).
\end{equation*}
Since $u_i\to e$ when $i\to\infty$, by the dominated convergence theorem, the above
inequality implies
\begin{equation}\label{useineq}
\int_{\Omega_j}(-K+\psi_1)h-3r\int_{\Omega_j}(2n+2ar+\psi_2)
\leq2\int_{\partial\Omega_j} h-\int_{\partial\Omega_j}e(\nu\,h).
\end{equation}

We also notice that
\begin{equation*}
\begin{aligned}
\int_{B(x_0,r)}(-K+\psi_1)h&\ge\int_{B(x_0,\frac{3}{4}r)}-Kh-\int_{B(x_0,r)}r^2\psi_1\\
&\ge\int_{B(x_0,\frac{3}{4}r)}\frac{7}{16}r^2K-\int_{B(x_0,r)}r^2\psi_1\\
&=\frac{7}{16}r^2K\cdot V_f\left(x_0,\frac{3}{4}r\right)-\int_{B(x_0,r)}r^2\psi_1
\end{aligned}
\end{equation*}
and hence,
\begin{equation*}
\begin{aligned}
&\int_{B(x_0,r)}(-K+\psi_1)h-3r\int_{B(x_0,r)}(2n+2ar+\psi_2)\\
&\geq\frac{7}{16}r^2K\cdot V_f\left(x_0,\frac{3}{4}r\right)-6(nr+ar^2)V_f(x_0,r)
-\int_{B(x_0,r)}(r^2\psi_1+3r\psi_2).
\end{aligned}
\end{equation*}
By relative volume comparison estimate, if $\bar{k}(p,r,H,a)$ is small enough,
then
\[
V_f\left(x_0,\frac{3}{4}r\right)\geq 2^{-1}e^{-ar}\left(\frac{\sin(\frac{3}{4}r)}{\sin r}\right)^n\cdot V_f(x_0,r).
\]
In fact, at this case, since $r\to\left(\frac{\pi}{\sqrt{H}}\right)-$, we have the
following comparison
\[
\left(\frac12\right)^{4p-n-1}\varphi^{2p-1}\mathcal{A}_f\, e^{-ar}
\leq(2p-1)^p\left(\frac{n-1}{2p-n}\right)^{p-1}\cdot\int^r_0({\mathrm{Ric}^H_f}_-)^p \mathcal{A}_fe^{-at}dt.
\]
Moreover, if $\bar{k}(p,H,a,r)$ is small enough, we also have
\[
\int_{B(x_0,r)}(r^2\psi_1+3r\psi_2)\leq (nr+r^2)V_f(x_0,r).
\]
Thus, if we choose
\[
K>\frac{32}{7}(7nr^{-1}+6a+1)e^{ar}\left(\frac{\sin r}{\sin(\frac r2)}\right)^n,
\]
then
\[
\int_{B(x_0,r)}(-K+\psi_1)h-3r\int_{B(x_0,r)}(2n+2ar+\psi_2)>0.
\]
Combining this and \eqref{useineq} immediately yields
\[
2\int_{\partial\Omega_j} h-\int_{\partial\Omega_j}e(\nu\,h)>0
\]
as $j\to \infty$. However, the first integral of the above inequality goes to zero
as $j\to \infty$; while in the second integral of the above inequality: $\nu\,h\geq 0$
on $\partial\Omega_j$ for all $j$, as $\Omega_j$ is star-shaped. This forces that
the excess function $e$ must be negative on $B(x_0,r)$, which is a contradiction
to the fact: $e\geq 0$. Hence $d(p_1,p_2)\leq D$ for some $D$.
\end{proof}

\vspace{.1in}

Next we apply the similar argument of Petersen-Sprouse \cite{[PeSp]} to prove
Theorem \ref{eigen} in the introduction.
\begin{proof}[Proof of Theorem \ref{eigen}]
The proof is easy only by some direct computation. Recall that $B(\bar{x}_0,R)$
is a metric ball in the weighted model space $M^n_{H,a}$, where
$R\leq \frac{\pi}{2\sqrt{H}}$. Let $\lambda^D_1(n,H,a,R)$ be the first
eigenvalue of the $h$-Laplacian $\Delta_h$ with the Dirichlet condition in
$M^n_{H,a}$, where $h(x):=-a\cdot d(\bar{x}_0,x)$,
and $u(x)=\phi(r)$ be the corresponding eigenfunction, which satisfies
\[
\phi''+(m_H+a)\phi'+\lambda^D_1(n,H,a,R)\phi=0,\quad\phi(0)=1,\quad\phi(R)=0.
\]
It is easy to see that $0\leq\phi\leq 1$, since $\phi'<0$ on $[0,R]$. Now
we consider the Rayleigh quotient of the function $u(x)=\phi(d(x_0, x))$.
In the course of the proof, we will use the relative volume comparison estimate
when volume normalization of some integral Bakry-\'{E}mery Ricci tensor is
sufficient small. Now, a direct computation yields that
\begin{equation*}
\begin{aligned}
\int_{B(x_0,R)}|\nabla u|^2 e^{-f}dv&=\int_{S^{n-1}}\int^R_0(\phi')^2\mathcal{A}_f(t,\theta)\,dtd\theta_{n-1}\\
&=\int_{S^{n-1}}\left(\phi\phi'\mathcal{A}_f\big|^R_0-\int^R_0\phi(\phi'\mathcal{A}_f)'\,dt\right)d\theta_{n-1}\\
&=-\int_{S^{n-1}}\int^R_0\phi(\phi''+m_f\phi')\mathcal{A}_f\,dtd\theta_{n-1}\\
&=-\int_{S^{n-1}}\int^R_0\phi(\phi''+(m_H+a)\phi')\mathcal{A}_f\,dtd\theta_{n-1}\\
&\quad-\int_{S^{n-1}}\int^R_0(m_f-m_H-a)\phi\phi'\mathcal{A}_f\,dtd\theta_{n-1}\\
&\leq\lambda^D_1(n,H,a,R)\int_{S^{n-1}}\int^R_0\phi^2\mathcal{A}_f\,dtd\theta_{n-1}\\
&\quad+\int_{S^{n-1}}\int^R_0(m_f-m_H-a)_+|\phi'|\mathcal{A}_f\,dtd\theta_{n-1}.
\end{aligned}
\end{equation*}
Hence the Rayleigh quotient satisfies
\begin{equation*}
\begin{aligned}
Q&=\frac{\int_{B(x_0,R)}|\nabla u|^2 e^{-f}dv}{\int_{B(x_0,R)}u^2 e^{-f}dv}\\
&\leq\lambda^D_1(n,H,a,R)+\frac{\int_{S^{n-1}}\int^R_0(m_f{-}m_H{-}a)_+\,|\phi'|\mathcal{A}_f\,dtd\theta_{n-1}}
{\int_{S^{n-1}}\int^R_0\phi^2\mathcal{A}_f\,dtd\theta_{n-1}}.
\end{aligned}
\end{equation*}
Now choose the first value $r=r(n,H,a,R)$ such that $\phi(r)=1/2$. Then the last error term can be estimated:
\begin{equation*}
\begin{aligned}
&\frac{\int_{S^{n-1}}\int^R_0(m_f{-}m_H{-}a)_+\,|\phi'|\mathcal{A}_f}
{\int_{S^{n-1}}\int^R_0\phi^2\mathcal{A}_f}\\
&\quad\leq\frac{\Big(\int_{S^{n-1}}\int^R_0(m_f{-}m_H{-}a)^2_+\,\mathcal{A}_f\Big)^{\frac 12}
\cdot\Big(\int_{S^{n-1}}\int^R_0|\phi'|^2\mathcal{A}_f\Big)^{\frac 12}}
{\frac 12 V^{\frac 12}_f(x_0,r)\cdot\Big(\int_{S^{n-1}}\int^R_0\phi^2\mathcal{A}_f\Big)^{\frac 12}}\\
&\quad\leq2\left(\frac{\int_{S^{n-1}}\int^R_0(m_f{-}m_H{-}a)^2_+\,\mathcal{A}_f}{V_f(x_0,r)}\right)^{\frac 12}
\sqrt{Q}.
\end{aligned}
\end{equation*}
On the other hand, if $\bar{k}(p,H,a,R)$ is very small, then we have the following
volume doubling estimate (see Corollary \ref{corvde}):
\[
\frac{V_f(x_0,R)}{V_f(x_0,r)}\leq 4\frac{V^a_H(R)}{V^a_H(r)}.
\]
Putting this into the above error estimate, we have
\begin{equation*}
\begin{aligned}
&\frac{\int_{S^{n-1}}\int^R_0(m_f{-}m_H{-}a)_+\,|\phi'|\mathcal{A}_f}
{\int_{S^{n-1}}\int^R_0\phi^2\mathcal{A}_f}\\
&\quad\leq4\left(\frac{V^a_H(R)}{V^a_H(r)}\right)^{1/2}
\left(\frac{\int_{S^{n-1}}\int^R_0(m_f{-}m_H{-}a)^2_+\,\mathcal{A}_f}{V_f(x_0,R)}\right)^{\frac 12}\sqrt{Q}.
\end{aligned}
\end{equation*}
By the Holder inequality, we observe that
\begin{equation*}
\begin{aligned}
\int_{S^{n-1}}&\int^R_0(m_f{-}m_H{-}a)^2_+\,\mathcal{A}_f\leq
\left(\int_{S^{n-1}}\int^R_0(m_f{-}m_H{-}a)^{2p}_+\,\mathcal{A}_f e^{-at}\right)^{\frac 1p}\\
&\qquad\qquad\qquad\qquad\qquad\quad\times\left(\int_{S^{n-1}}\int^R_0\mathcal{A}_f e^{at}\right)^{1-\frac 1p}\\
&\leq e^{a(1-\frac 1p)R}\cdot V_f(x_0,R)^{1-\frac 1p}
\left(\int_{S^{n-1}}\int^R_0(m_f{-}m_H{-}a)^{2p}_+\,\mathcal{A}_f e^{-at}\right)^{\frac 1p}.
\end{aligned}
\end{equation*}
Using this, we further have
\begin{equation*}
\begin{aligned}
&\frac{\int_{S^{n-1}}\int^R_0(m_f{-}m_H{-}a)_+\,|\phi'|\mathcal{A}_f}
{\int_{S^{n-1}}\int^R_0\phi^2\mathcal{A}_f}\\
&\quad\leq4 e^{\frac{(p-1)a}{2p}R}\left(\frac{V^a_H(R)}{V^a_H(r)}\right)^{\frac 12}
\left(\frac{\int_{S^{n-1}}\int^R_0(m_f{-}m_H{-}a)^{2p}_+\,\mathcal{A}_fe^{-at}}{V_f(x_0,R)}\right)^{\frac {1}{2p}}\sqrt{Q}\\
&\quad\leq C(n,p,H,a,R)(\bar{k}(p,H,a,R))^{\frac 12}\sqrt{Q}.
\end{aligned}
\end{equation*}
Therefore,
\[
Q\leq\lambda^D_1(n,H,a,R)+C(n,p,H,a,R)(\bar{k}(p,H,a,R))^{\frac {1}{2p}}\sqrt{Q},
\]
which implies the desired result.
\end{proof}

\

Finally, we use Theorem \ref{vocoannu} to prove Theorem \ref{volumegrow}. The proof method
is similar to the classical case.
\begin{proof}[Proof of Theorem \ref{volumegrow}]
Since $a=0$ and $H=0$, Theorem \ref{vocoannu} in fact can be simply written as
\[
\left(\frac{V_f(x,r_2,R_2)}{V_0(r_2,R_2)}\right)^{\frac{1}{2p-1}}
-\left(\frac{V_f(x,r_1,R_1)}{V_0(r_1,R_1)}\right)^{\frac{1}{2p-1}}
\leq \mathcal{C}\cdot\left({\|{\mathrm{Ric}^0_f}_-\|^p_p}_{\,f,0}(R_2)\right)^{\frac{1}{2p-1}},
\]
for any $0\leq r_1\leq r_2\leq R_1\leq R_2$, where
\begin{equation*}
\begin{aligned}
\mathcal{C}&:=C(n,p)\left[\int^{r_2}_{r_1}R^{n-1}_1\left(\frac{R_1}{(R_1-t)^n}\right)^{\frac{2p}{2p-1}}dt
+\int^{R_2}_{R_1}t^{n-1}\left(\frac{t}{(t-r_2)^n}\right)^{\frac{2p}{2p-1}}dt\right]\\
&\leq2C(n,p)\, R_2^n\,\left(\frac{R_2}{(R_1-r_2)^n}\right)^{\frac{2p}{2p-1}}.
\end{aligned}
\end{equation*}
Let $x\in M$ be a point with $d(x_0,x)=R\geq 2$. Letting $r_1=0$,
$r_2=R-1$, $R_1=R$ and $R_2=R+1$ in the above inequality, then
\begin{equation*}
\begin{aligned}
&\frac{V_f(x,R+1)-V_f(x,R-1)}{(R+1)^n-(R-1)^n}\\
&\leq\left[\left(\frac{V_f(x,R)}{R^n}\right)^{\frac{1}{2p-1}}+2C(n,p)(R{+}1)^{n{+}\frac{2p}{2p-1}}\left({\|{\mathrm{Ric}^0_f}_-\|^p_p}_{\,f,0}(R{+}1)\right)^{\frac{1}{2p-1}}\right]^{2p-1}.
\end{aligned}
\end{equation*}
Using the inequality $(a+b)^m\leq 2^{m-1}(a^m+b^m)$ for all $a>0$ and $b>0$ with $m=2p-1$,
we have the following inequality
\begin{equation*}
\begin{aligned}
&\frac{V_f(x,R+1)-V_f(x,R-1)}{(R+1)^n-(R-1)^n}\\
&\leq \tilde{C}(n,p)\frac{V_f(x,R)}{R^n}+\tilde{C}(n,p)(R+1)^{n(2p-1)+2p}\,{\|{\mathrm{Ric}^0_f}_-\|^p_p}_{\,f,0}(R+1)
\end{aligned}
\end{equation*}
for some constant $\tilde{C}(n,p)$. Multiplying this inequality by $\frac{(R+1)^n-(R-1)^n}{V_f(x,R+1)}$,
by the definition of $\bar{k}$, we hence have
\begin{equation*}
\begin{aligned}
&\frac{V_f(x,R+1)-V_f(x,R-1)}{V_f(x,R+1)}\\
&\leq \frac{D(n,p)}{R}+D(n,p)(R+1)^{n(2p-1)+2p}\cdot\bar{k}^p(p,0,0,R+1).
\end{aligned}
\end{equation*}
for some constant $D(n,p)$. Now we choose $\epsilon=\epsilon(n,p,R)$ small enough with
$\bar{k}(p,0,0,R+1)<\epsilon$, such that
\begin{equation*}
\begin{aligned}
&\frac{V_f(x,R+1)-V_f(x,R-1)}{V_f(x,R+1)}\leq \frac{2D(n,p)}{R}.
\end{aligned}
\end{equation*}
Since $B(x_0,1)\subset B(x,R+1)\setminus B(x,R-1)$ and $B(x, R+1)\subset B(x_0,2R+1)$, hence
we have
\[
V_f(x_0,2R+1)\geq V_f(x,R+1)\geq \frac{V_f(x_0,1)}{2D(n,p)}\,R
\]
for the $R\geq2$.
\end{proof}


\section{Appendix. Comparison estimates for integral bounds of $m$-Bakry-Emery Ricci tensor}

In this section, we will state $f$-mean curvature comparison estimates and
relative $f$-volume comparison estimates when only the weighted integral bounds of
the $m$-Bakry-\'{E}mery Ricci tensor. Since the proof is almost the same as
the manifold case, we omit these proofs here.

Recall that another natural generalization of the Ricci tensor associated to
smooth metric measure space $(M,g,e^{-f}dv_g)$ is called $m$-Bakry-\'Emery Ricci
tensor, which is defined by
\[
\mathrm{Ric}_f^m:=\mathrm{Ric}_f-\frac{1}{m}df\otimes df
\]
for some number $m>0$. This curvature tensor is also introduced by Bakry and \'Emery
\cite{[BE]}. Here $m$ is finite, and we have the Bochner formula for the $m$-Bakry-\'Emery
Ricci tensor
\begin{equation}\label{weighboch}
\begin{aligned}
\frac 12\Delta_f|\nabla u|^2&=|\mathrm{Hess}u|^2
+\langle\nabla\Delta_f u, \nabla u\rangle+\mathrm{Ric}_f(\nabla u, \nabla u)\\
&\geq\frac{(\Delta_f u)^2}{m+n}
+\langle\nabla\Delta_f u, \nabla u\rangle+\mathrm{Ric}_f^m(\nabla u, \nabla u)
\end{aligned}
\end{equation}
for some $u\in C^\infty(M)$, which is regarded as the Bochner formula of the Ricci
curvature of an $(n+m)$-dimensional manifold. Hence many classical geometrical
and topological results for manifolds with Ricci tensor bounded below can be
easily extended to smooth metric measure spaces with $m$-Bakry-\'Emery Ricci
tensor bounded below (without any assumption on $f$), see for example
\cite{[BQ1],[BQ2],[LD],[WW0]} for details.

Let $(M,g,e^{-f}dv_g)$ be an $n$-dimensional smooth metric measure space.
For each $x\in M$, $m>0$ and let $\lambda\left( x\right) $ denote the smallest
eigenvalue for the tensor $\mathrm{Ric}^m_f:T_xM\to T_xM$. We define
\[
{\mathrm{Ric}^{m\,H}_f}_-:=\big((n+m-1)H-\lambda(x)\big)_+,
\]
where $H\in \mathbb{R}$, which measures the amount of $m$-Bakry-\'Emery
Ricci tensor below $(n+m-1)H$. We also introduce a
$L_f^p$-norm of function $\phi$, with respect to the weighted measure
$e^{-f}dv_g$:
\[
{\|\phi\|_p}_f(r):=\sup_{x\in M}\Big(\int_{B_x(r)}|\phi|^p\cdot e^{-f}dv_g\Big)^{\frac 1p}.
\]
Clearly, ${\|{\mathrm{Ric}^{m\,H}_f}_-\|_p}_f(r)=0$ iff ${\mathrm{Ric}^m_f}\geq (n+m-1)H$.
Notice that when $f$ is constant, all above notations are just as the usual quantities on
manifolds.

Let $r(y) = d(y, x)$ be the distance function from $x$ to $y$, and define
\[
\varphi:=(\Delta_f\,r-m^{n+m}_H)_+,
\]
the error from weighted mean curvature comparison in \cite{[BQ2]}. Here
$m^{n+m}_H$ denotes the mean curvature of the geodesic sphere in $M^{n+m}_H$, the
$n+m$-dimensional simply connected space with constant sectional curvature $H$.
The weighted Laplacian comparison states that if ${\mathrm{Ric}^m_f}\geq (n+m-1)H$,
then $\Delta_f\,r \le m^{m+n}_H$ (see for example \cite{[BQ2], [WW0]}). Using
\eqref{weighboch}, following the discussion in Section \ref{sec2}, we can similarly
generalize Petersen-Wei's and Aubry's comparison results to the case of smooth
metric measure spaces with only the $m$-Bakry-\'Emery Ricci tensor integral bounds.

\begin{theorem}\label{Mainthman}
Let $(M,g,e^{-f}dv)$ be an $n$-dimensional smooth metric measure space.
For any $p>\frac{n+m}{2}$, $m>0$, $H\in\mathbb{R}$ and $r>0$ (assume
$r\leq\frac{\pi}{2\sqrt{H}}$ when $H>0$), then
\[
{\|\varphi\|_{2p}}_f(r)\leq \left[\frac{(n+m-1)(2p-1)}{2p-n-m}
{\|{\mathrm{Ric}^{m\,H}_f}_-\|_p}_f(r)\right]^{\frac 12}
\]
and
\[
\varphi^{2p-1}\mathcal{A}_f
\leq(2p-1)^p\left(\frac{n+m-1}{2p-n-m}\right)^{p-1}\cdot\int^r_0({\mathrm{Ric}^{m\,H}_f}_-)^p \mathcal{A}_fdt
\]
along any minimal geodesic segment from $x$.

\vspace{.1in}

Moreover, if $H>0$ and $\frac{\pi}{2\sqrt{H}}<r<\frac{\pi}{\sqrt{H}}$, then
\[
{\Big\|\sin^{\frac{4p-n-m-1}{2p}}(\sqrt{H}t)\cdot\varphi\Big\|_{2p}}_f(r)
\leq\left[\frac{(n+m-1)(2p-1)}{2p-n-m}
{\|{\mathrm{Ric}^{m\,H}_f}_-\|_p}_f(r)\right]^{\frac 12}
\]
and
\[
\sin^{4p-n-m-1}(\sqrt{H}r)\varphi^{2p-1}\mathcal{A}_f
\leq(2p-1)^p\left(\frac{n+m-1}{2p-n-m}\right)^{p-1}
\cdot\int^r_0({\mathrm{Ric}^{m\,H}_f}_-)^p \mathcal{A}_fdt
\]
along any minimal geodesic segment from $x$.
\end{theorem}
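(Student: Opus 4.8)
The plan is to transcribe the proof of Theorem~\ref{Mainthm} almost verbatim, replacing the dimension $n$ throughout by the ``effective dimension'' $n+m$, replacing $\Delta r$ by $\Delta_f r = m_f$, and setting $a=0$; the advantage of the finite-$m$ setting is that no hypothesis on $f$ (or on $\partial_r f$) will be needed, so the integrating factor $e^{-ar}$ of Section~\ref{sec2} disappears. The one genuinely new ingredient — and the step I expect to be the crux — is the $(n+m)$-dimensional Riccati inequality for $\Delta_f r$. First I would apply the ordinary Bochner formula to $u=r(y)$, use $|\mathrm{Hess}\,r|^2\ge(\Delta r)^2/(n-1)$ together with $m_f=\Delta r-\partial_r f$ and $\mathrm{Ric}_f=\mathrm{Ric}+\mathrm{Hess}\,f$ to obtain $m_f'\le-\tfrac{(\Delta r)^2}{n-1}-\mathrm{Ric}_f(\partial_r,\partial_r)$, exactly as in Section~\ref{sec2}; then I would insert $\mathrm{Ric}_f=\mathrm{Ric}^m_f+\tfrac1m df\otimes df$ and apply the elementary inequality $\tfrac{A^2}{n-1}+\tfrac{B^2}{m}\ge\tfrac{(A+B)^2}{n+m-1}$ with $A=\Delta r$ and $B=-\partial_r f$ (so $A+B=m_f$) to reach
\[
m_f'\le-\frac{m_f^2}{n+m-1}-\mathrm{Ric}^m_f(\partial_r,\partial_r).
\]
Since the model mean curvature satisfies $(m^{n+m}_H)'=-\tfrac{(m^{n+m}_H)^2}{n+m-1}-(n+m-1)H$, this is the precise analogue of the inequality driving Section~\ref{sec2} with $n\mapsto n+m$; it is here, and only here, that the finiteness of $m$ enters.

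Next I would set $\varphi:=(\Delta_f r-m^{n+m}_H)_+$, subtract the two Riccati relations, factor the difference of squares, and use $(n+m-1)H-\mathrm{Ric}^m_f(\partial_r,\partial_r)\le{\mathrm{Ric}^{m\,H}_f}_-$ to get, along each minimal geodesic from $x$ (with $\varphi'$ understood in the barrier sense at $\mathrm{Cut}(x)$ and on $\{\Delta_f r=m^{n+m}_H\}$, as in \cite{[PeWe]}),
\[
\varphi'+\frac{\varphi^2}{n+m-1}+\frac{2m^{n+m}_H\varphi}{n+m-1}\le{\mathrm{Ric}^{m\,H}_f}_-.
\]
Multiplying by $(2p-1)\varphi^{2p-2}\mathcal{A}_f$ and using $(\varphi^{2p-1}\mathcal{A}_f)'=(2p-1)\varphi^{2p-2}\varphi'\mathcal{A}_f+\varphi^{2p-1}m_f\mathcal{A}_f$ collects this into
\[
(\varphi^{2p-1}\mathcal{A}_f)'+\frac{2p-n-m}{n+m-1}\varphi^{2p}\mathcal{A}_f+\frac{4p-n-m-1}{n+m-1}m^{n+m}_H\varphi^{2p-1}\mathcal{A}_f\le(2p-1){\mathrm{Ric}^{m\,H}_f}_-\cdot\varphi^{2p-2}\mathcal{A}_f.
\]
When $H\le0$, or $H>0$ and $r\le\tfrac{\pi}{2\sqrt H}$, one has $m^{n+m}_H\ge0$ and, since $p>\tfrac{n+m}{2}$, $4p-n-m-1>0$, so the third term is nonnegative and may be dropped; integrating from $0$ to $r$ (note $\varphi(0)=0$, which follows from the blow-up of $m^{n+m}_H$ at $0$ and needs no assumption on $f$), then applying Hölder's inequality with exponents $p$ and $\tfrac{p}{p-1}$ to $\int_0^r{\mathrm{Ric}^{m\,H}_f}_-\varphi^{2p-2}\mathcal{A}_f\,dt$, then integrating over $S^{n-1}$ and taking $\sup_x$, yields the first two displayed inequalities exactly as in the proof of Theorem~\ref{Mainthm}.

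For $H>0$ and $\tfrac{\pi}{2\sqrt H}<r<\tfrac{\pi}{\sqrt H}$ one has $m^{n+m}_H<0$, so the third term cannot be discarded; instead I would multiply the preceding display by the integrating factor $\sin^{4p-n-m-1}(\sqrt H r)$ before integrating from $0$ to $r$, which absorbs that term and leaves
\[
\big[\sin^{4p-n-m-1}(\sqrt H r)\varphi^{2p-1}\mathcal{A}_f\big]'+\frac{2p-n-m}{n+m-1}\sin^{4p-n-m-1}(\sqrt H t)\varphi^{2p}\mathcal{A}_f\le(2p-1)\sin^{4p-n-m-1}(\sqrt H t){\mathrm{Ric}^{m\,H}_f}_-\varphi^{2p-2}\mathcal{A}_f.
\]
Hölder once more, together with $\sin^{4p-n-m-1}(\sqrt H t)\le1$ on the right-hand side, then gives the last two inequalities. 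The only remaining points — the barrier interpretation of $\varphi'$ across $\mathrm{Cut}(x)$, and the passage from the along-a-geodesic estimates to the ${\|\cdot\|_p}_f$-norms via $\int_{S^{n-1}}\!\int_0^r(\,\cdot\,)\mathcal{A}_f\,dt\,d\theta_{n-1}\le\int_{B_x(r)}(\,\cdot\,)e^{-f}dv_g$ followed by $\sup_x$ — are handled exactly as in Section~\ref{sec2} and are entirely routine.
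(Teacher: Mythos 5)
Your proposal is correct and is exactly the argument the paper intends (the paper itself omits the proof, saying only that one should follow Section~\ref{sec2} using the weighted Bochner formula). You have correctly identified the one genuinely new step: the sharp Riccati inequality
\[
m_f'\le-\frac{m_f^2}{n+m-1}-\mathrm{Ric}^m_f(\partial_r,\partial_r),
\]
obtained from $|\mathrm{Hess}\,r|^2\ge(\Delta r)^2/(n-1)$, the decomposition $\mathrm{Ric}_f=\mathrm{Ric}^m_f+\tfrac1m\,df\otimes df$, and the Cauchy--Schwarz inequality $\tfrac{A^2}{n-1}+\tfrac{B^2}{m}\ge\tfrac{(A+B)^2}{n+m-1}$ with $A=\Delta r$, $B=-\partial_r f$. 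Note that your derivation is slightly more careful than the paper's brief sketch, which literally cites the Bochner inequality \eqref{weighboch} with denominator $n+m$; for the distance function the sharp denominator $n+m-1$ (needed to match the model $m^{n+m}_H$ and the constants in the theorem) comes exactly from the trace bound $|\mathrm{Hess}\,r|^2\ge(\Delta r)^2/(n-1)$, as you use. You also correctly observe that $\varphi(0)=0$ now follows with no hypothesis on $f$, since $m^{n+m}_H\sim(n+m-1)/r$ dominates $m_f\sim(n-1)/r-\partial_r f$ near the origin; that is indeed the reason the integrating factor $e^{-ar}$ of Section~\ref{sec2} is not needed here. The rest (multiplying by $(2p-1)\varphi^{2p-2}\mathcal{A}_f$, rearranging with $(\varphi^{2p-1}\mathcal{A}_f)'=(2p-1)\varphi^{2p-2}\varphi'\mathcal{A}_f+\varphi^{2p-1}m_f\mathcal{A}_f$, dropping the nonnegative $m^{n+m}_H$-term when $r\le\pi/(2\sqrt H)$, using $\sin^{4p-n-m-1}(\sqrt H r)$ as integrating factor otherwise, H\"older, and passing to the ${\|\cdot\|_p}_f$-norms) matches Section~\ref{sec2} verbatim with $n\mapsto n+m$, $a=0$.
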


\vspace{.1in}

Using Theorem \ref{Mainthman}, we have the corresponding volume
comparison estimate when only the weighted integral bounds of ${\mathrm{Ric}^m_f}$.

\begin{theorem}\label{volcomp2}
Let $(M,g,e^{-f}dv)$ be an $n$-dimensional smooth metric measure space. Let
$H\in\mathbb{R}$ and $p>\frac{n+m}{2}$, $m>0$. For $0<r\leq R$ (assume
$R\leq\frac{\pi}{2\sqrt{H}}$ when $H>0$),
\[
\left(\frac{V_f(x,R)}{V^{n+m}_H(R)}\right)^{\frac{1}{2p-1}}-\left(\frac{V_f(x,r)}{V^{n+m}_H(r)}\right)^{\frac{1}{2p-1}}
\leq C(n,m,p,H,R)\Big({\|{\mathrm{Ric}^H_f}_-\|^p_p}_f(R)\Big)^{\frac{1}{2p-1}}.
\]
Here,
\[
C(n,m,p,H,R):=\left(\frac{n{+}m{-}1}{(2p{-}1)(2p{-}n{-}m)}\right)^{\frac{p-1}{2p-1}}
\int^R_0A^{n+m}_H(t)\left(\frac{t}{V^{n+m}_H(t)}\right)^{\frac{2p}{2p-1}}dt,
\]
where $V^{n+m}_H(t)=\int^t_0A^{n+m}_H(s)ds$, $A^{n+m}_H(t)=\int_{S^{n-1}}\mathcal{A}^{n+m}_H(t,\theta)d\theta_{n-1}$,
and $\mathcal{A}^{n+m}_H$ is the volume element in the model space $M^{n+m}_H$.
\end{theorem}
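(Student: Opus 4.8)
The plan is to deduce the stated volume comparison from the $m$-Bakry--\'Emery mean curvature estimate of Theorem~\ref{Mainthman}, essentially by repeating, almost verbatim, the passage from Theorem~\ref{Mainthm} to Theorems~\ref{areacomp} and~\ref{volcomp} in Section~\ref{sec3}, with the dimension $n$ replaced throughout by $n+m$ in every model--space quantity. This works because the Bochner inequality~\eqref{weighboch} yields exactly the ``$(n+m)$--dimensional'' Riccati comparison for the weighted mean curvature $m_f=\Delta_f r$, so the whole algebra of the unweighted Petersen--Wei argument carries over under $n\mapsto n+m$, with the Riemannian volume element replaced by the weighted one $\mathcal{A}_f$ and with $V^{n+m}_H,A^{n+m}_H$ in the roles of $V_H,A_H$.

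First I would work in geodesic polar coordinates about $x$, write $e^{-f}dv_g=\mathcal{A}_f(t,\theta)\,dt\wedge d\theta_{n-1}$, and use $\mathcal{A}_f'=(\Delta_f r)\mathcal{A}_f=m_f\mathcal{A}_f$ and $(\mathcal{A}^{n+m}_H)'=m^{n+m}_H\mathcal{A}^{n+m}_H$ to get
\[
\frac{d}{dt}\!\left(\frac{\mathcal{A}_f(t,\theta)}{\mathcal{A}^{n+m}_H(t)}\right)=(m_f-m^{n+m}_H)\,\frac{\mathcal{A}_f(t,\theta)}{\mathcal{A}^{n+m}_H(t)}\leq\varphi\,\frac{\mathcal{A}_f(t,\theta)}{\mathcal{A}^{n+m}_H(t)},
\]
where $\varphi=(\Delta_f r-m^{n+m}_H)_+$ as in the Appendix. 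Integrating over $S^{n-1}$, applying H\"older on $S^{n-1}$ with exponents $2p-1$ and $\tfrac{2p-1}{2p-2}$, and inserting the pointwise bound $\varphi^{2p-1}\mathcal{A}_f\leq(2p-1)^p\big(\tfrac{n+m-1}{2p-n-m}\big)^{p-1}\int_0^t({\mathrm{Ric}^{m\,H}_f}_-)^p\mathcal{A}_f\,ds$ of Theorem~\ref{Mainthman} (together with the definition of ${\|\cdot\|_p}_f$) then produces the analogue of~\eqref{keyinequ},
\[
\frac{d}{dt}\!\left(\frac{A_f(x,t)}{A^{n+m}_H(t)}\right)\leq C(n,m,p)\left(\frac{A_f(x,t)}{A^{n+m}_H(t)}\right)^{1-\frac{1}{2p-1}}\left({\|{\mathrm{Ric}^{m\,H}_f}_-\|_p}_f(t)\right)^{\frac{p}{2p-1}}\left(\frac{1}{A^{n+m}_H(t)}\right)^{\frac{1}{2p-1}}.
\]

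From here I would follow the proof of Theorem~\ref{volcomp} word for word: write $\frac{V_f(x,r)}{V^{n+m}_H(r)}=\frac{\int_0^rA_f(x,t)\,dt}{\int_0^rA^{n+m}_H(t)\,dt}$, differentiate in $r$, integrate the displayed area inequality from $t$ to $r$ to estimate $A_f(x,r)A^{n+m}_H(t)-A^{n+m}_H(r)A_f(x,t)$ (using $A^{n+m}_H(s)^{-1}\le A^{n+m}_H(t)^{-1}$ for $s\ge t$ and $\int_t^rA_f(x,s)^{1-\frac1{2p-1}}ds\le(r-t)^{\frac1{2p-1}}V_f(x,r)^{1-\frac1{2p-1}}$ by H\"older), substitute back, separate variables, and integrate from $r$ to $R$ with ${\|\cdot\|_p}_f(s)\le{\|\cdot\|_p}_f(R)$. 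This gives exactly the stated inequality, with $C(n,m,p,H,R)=\big(\tfrac{n+m-1}{(2p-1)(2p-n-m)}\big)^{\frac{p-1}{2p-1}}\int_0^RA^{n+m}_H(t)\big(\tfrac{t}{V^{n+m}_H(t)}\big)^{\frac{2p}{2p-1}}dt$.

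I expect the only point needing genuine attention --- as opposed to bookkeeping --- to be the finiteness of this last integral. Near $t=0$ one has $A^{n+m}_H(t)\sim c\,t^{n+m-1}$ and $V^{n+m}_H(t)\sim\tfrac{c}{n+m}t^{n+m}$, so the integrand behaves like $t^{-(n+m-1)/(2p-1)}$, which is integrable precisely when $p>\tfrac{n+m}{2}$; this is where the dimension hypothesis enters, and it also explains why (unlike Theorem~\ref{volcomp}) there is no absolute $r=0$ version. Otherwise the argument is routine, the point being that~\eqref{weighboch} already absorbs $f$ into $m$ extra dimensions; the one thing to keep straight is to use the $(n+m)$--dimensional model quantities $m^{n+m}_H,A^{n+m}_H,V^{n+m}_H$ consistently while leaving $d\theta_{n-1}$ as the measure on the genuine unit $(n-1)$--sphere, since $M$ is still $n$--dimensional.
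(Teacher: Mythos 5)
Your proposal is correct and follows exactly the route the paper indicates for this appendix result: repeat the Section~\ref{sec3} argument (area comparison via H\"older on $S^{n-1}$ plus Theorem~\ref{Mainthman}, then the Gromov-style ratio derivative and separation of variables) with $n\mapsto n+m$ in every model-space quantity while keeping $d\theta_{n-1}$ on the genuine unit $(n-1)$-sphere; the paper simply omits the proof as ``almost the same as the manifold case.'' One small clarification on your final aside: the absence of an absolute $r=0$ version is not caused by divergence of the integral defining $C(n,m,p,H,R)$ (which you correctly verify converges once $p>\tfrac{n+m}{2}$), but rather by the mismatch of scaling exponents, $V_f(x,r)\sim e^{-f(x)}\omega_n r^n$ versus $V^{n+m}_H(r)\sim c\,r^{n+m}$, which makes $V_f(x,r)/V^{n+m}_H(r)\to\infty$ as $r\to 0$, unlike the $n$-dimensional model in Theorem~\ref{volcomp} where this ratio tends to $e^{-f(x)}$.
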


\vspace{.1in}
Similar to the manifolds case, comparison estimates for the weighted integral
bounds of $\mathrm{Ric}^m_f$ have many applications, which will be not treated
here. Recall that \eqref{weighboch} allows one to extend many classical results for
manifolds of pointwise Ricci tensor condition to smooth metric measure spaces of
pointwise ${\mathrm{Ric}^m_f}$ condition, such as \cite{[BQ1],[BQ2],[LD],[WW0]}. In a
similar fashion, because Theorems \ref{Mainthman} and \ref{volcomp2} for $n$-dimensional
smooth metric measure spaces are essentially the same as the usual $(n+m)$-manifolds
case. We believe that many geometrical and topological results for the integral
Ricci tensor, such as Myers' type theorems \cite{[Au],[PeSp]}, finiteness fundamental
group theorems \cite{[Au],[HuXu]}, the first Betti number estimate \cite{[HuXu]},
Gromov's bounds on the volume entropy \cite{[Au2]}, compactness theorems \cite{[PeWe]},
heat kernel estimates \cite{[DW]}, isoperimetric inequalities \cite{[Gal],[PeSp]}, Colding's
volume convergence and Cheeger-Colding splitting theorems \cite{PeWe00}, local Sobolev
constant estimates \cite{[DWZ]}, etc. are all possibly extended to the case where the
weighted integral of $\mathrm{Ric}^m_f$ bounds.

\bibliographystyle{amsplain}

\end{document}